\newtheorem{theorem}{Theorem}
\newtheorem{proposition}[theorem]{Proposition}
\newtheorem{lemma}[theorem]{Lemma}
\theoremstyle{definition}
\newtheorem{remark}[theorem]{Remark}
\numberwithin{theorem}{section}
\numberwithin{equation}{section}
\newcommand{\beq}{\begin{equation}}
\newcommand{\eeq}{\end{equation}}
\newcommand{\R}{\mathbb{R}}
\newcommand{\N}{\mathbb{N}}
\newcommand{\mf}[1]{\mathbf{#1}}
\newcommand{\pa}{\partial}
\renewcommand{\S}{\mathbb{S}}
\renewcommand{\div}{\,{\rm div}\,}
\newcommand{\cC}{{\mathcal C}}   
\newcommand{\cD}{{\mathcal D}}   
\newcommand{\cE}{{\mathcal E}}
\newcommand{\dist}{{\rm dist}}
\newcommand{\supp}{{\rm supp}}
\newcommand{\weak}{\rightharpoonup}
\newcommand{\eps}{\varepsilon}
\DeclareMathOperator{\loc}{loc}
\renewcommand{\epsilon}{\varepsilon}
\author[N. Soave]{Nicola Soave}\thanks{}
\address{Nicola Soave \newline \indent
Dipartimento di Matematica,  Politecnico di Milano,  \newline \indent
Via Edoardo Bonardi 9, 20133 Milano, Italy}
\email{nicola.soave@gmail.com; nicola.soave@polimi.it}
\author[S. Terracini]{Susanna Terracini}\thanks{}
\address{Susanna Terracini \newline \indent
 Dipartimento di Matematica ``Giuseppe Peano'', Universit\`a di Torino, \newline \indent
Via Carlo Alberto, 10,
10123 Torino, Italy}
\email{susanna.terracini@unito.it}
\title[An anisotropic monotoncity formula, with applications]{An anisotropic monotoncity formula, with applications to some segregation problems}
\keywords{Alt-Caffarelli-Friedman monotonicity formula; anisotropic free-boundary problems; strong competition; segregation; Liouville-type theorems}
\subjclass{35R35 (35B25; 35J55; 35J60)}
\thanks{N. S. is partially supported by the INDAM - GNAMPA group. \\
Declarations of interest: none.}
\begin{document}

\begin{abstract}
We prove an Alt-Caffarelli-Friedman montonicity formula for pairs of functions solving elliptic equations driven by different ellipticity matrices in their positivity sets. As application, we derive Liouville-type theorems for subsolutions of some elliptic systems, and we analyze segregation phenomena for systems of equations where the diffusion of each density is described by a different operator. 
\end{abstract} 

\maketitle

\section{Introduction}

The Alt-Caffarelli-Friedman (ACF) monotonicity formula is a cornerstone in the theory of free-boundary problems with two or more phases. In its original formulation \cite{ACF}, it establishes that if $u,v \in H^1_{\loc}(B_R) \cap C(B_R)$ are non-negative, continuous, subharmonic functions with disjoint positivity sets, i.e.
\[
u,v \ge 0, \qquad  - \Delta u \le 0, \qquad -\Delta v \le 0, \qquad u \cdot v \equiv 0 \qquad \text{in $B_R \subset \R^N$},
\] 
then the functional
\begin{equation}\label{J ACF orig}
r \mapsto J(u,v, x_0,r)  =  \frac{1}{r^{4}} \int_{B_r(x_0)} \frac{|\nabla u|^2}{|x-x_0|^{N-2}}\,dx \int_{B_r(x_0)} \frac{|\nabla v|^2}{|x-x_0|^{N-2}}\,dx 
\eeq
is monotone non-decreasing for $0<r<\dist(x_0,\pa B_R)$. Here and in the rest of the paper $B_r(x_0)$ (resp. $S_r(x_0)= \pa B_r(x_0)$) denotes the Euclidean ball (resp. sphere) of center $x_0$ and radius $r>0$, and we simply write $B_r$ and $S_r$ if $x_0 =0$. 

The monotonicity formula was introduced in \cite{ACF}, as the key tool to prove the optimal Lipschitz regularity of solutions to a two-phase problem, and since then 
has been successfully applied in a number of different contexts. Several generalizations of the ACF formula are now available, tailored to deal with elliptic or parabolic equations with variable coefficients \cite{Caff88, CKajm}, and also equations with right hand side  \cite{CJK, EP, MP, Vel}; in this latter case, one obtains the so-called almost monotoncity formula. Moreover, a counterpart of the ACF formula is available also for the fractional Laplacian \cite{TTV,TVZ14, TVZ16} and for the $p$-Laplacian \cite{DK}. A common feature of all these contributions is that different phases satisfy equations driven by the same operator, as in the original ACF result. 

In this paper we address the case when, on the contrary, $u$ and $v$ satisfy equations involving different uniformly elliptic operators.  Only recently some related free boundary problems have been investigated in the literature. In \cite{AM12},  Andersson and Mikayelyan  prove partial regularity of the zero set of weak solutions to a quasilinear divergence problem at the jump. Kim, Lee and Shahgholian (\cite{KLS,KLS19}), are concerned with the regularity of the solutions and of the nodal set to equations with jump of conductivity. 
Moreover,  in the paper \cite{CDS18}, Caffarelli, De Silva and Savin deal with a two phase anisotropic problem in dimension $2$, and prove the Lipschitz regularity of the solutions;  finally we quote \cite{CPQT19}, where it is investigated the regularity of interfaces of a Pucci type segregation problem.

In \cite{KLS}, the authors  focus on the problem 
\[
-\div(a_+(x) \nabla u) \le 0, \qquad   -\div(a_-(x) \nabla v) \le 0 \qquad \text{in $B_R$}
\]
for different \emph{scalar} positive functions $a_\pm$. The fact that $a_\pm$ are different scalar functions makes the problem asymmetric, but essentially isotropic, and indeed the authors obtained a perturbed monotonicity formula for the same functional $J$ defined in \eqref{J ACF orig}. In contrast, we deal with a truly anisotropic two phases problem, thus assuming that $\div(A_1 \nabla u) \ge 0$ and $\div(A_2 \nabla v) \ge 0$ for two positive definite symmetric $N \times N$ matrices $A_1, A_2$ with constant coefficients. This makes our setting somehow similar to that of  \cite{AM12},  where the authors consider weak solutions to $\div(B(w) \nabla w) = 0$, where  $B(w) = (A-\textrm{Id}) \chi_{\{w>0\}} + \textrm{Id}$.  As far as we know, the following is the first monotonicity formula of ACF type specifically tailored for the anisotropic case.
After some transformations, we can always assume that $A_1=A$ is diagonal, with lowest eigenvalue equal to $1$, and $A_2$ is the identity (see the proof of Theorem \ref{thm: liou seg} below for more details), and we obtain the following result.

\begin{theorem}\label{thm: ACF}
Let $N \ge 2$, let $A \neq \textrm{Id}$ be a $N \times N$ diagonal matrix with diagonal entries 
\[
1=a_1 \le a_2 \le \dots \le a_N, 
\]
and let
\beq\label{def fun}
\Gamma_A(x):= \left( \sum_{i=1}^N \frac{x_i^2}{a_i} \right)^\frac{2-N}2. 
\eeq
Let $u,v \in H^1_{\loc}(B_R)$ be such that
\[
u,v \ge 0, \qquad  - \div(A \nabla u) \le 0, \qquad -\Delta v \le 0, \qquad u \cdot v \equiv 0 \qquad \text{in $B_R \subset \R^N$}.
\] 
There exists an exponent $\nu_{A,N} \in (0,2)$ depending on $A$ and on $N$ such that the functional
\[
\begin{split}
r \mapsto J(u,v, x_0,r) & =  \frac{1}{r^{2\nu_{A,N}}} \int_{B_r(x_0)} \langle A \nabla u, \nabla u \rangle \Gamma_A(x-x_0)\,dx \int_{B_r(x_0)} \frac{|\nabla v|^2}{|x-x_0|^{N-2}}\,dx 
\end{split}
\]
is monotone non-decreasing for $0<r<\dist(x_0,\pa B_R)$, $x_0 \in B_R$.
\end{theorem}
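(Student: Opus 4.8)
The plan is to carry the classical Alt--Caffarelli--Friedman scheme through the anisotropy, reducing the monotonicity of $J$ to a spherical eigenvalue inequality. Since $A$ has constant coefficients the hypotheses are translation invariant, so one may assume $x_0=0$ and write $J(u,v,0,r)=r^{-2\nu_{A,N}}\,I_1(r)\,I_2(r)$ with $I_1(r)=\int_{B_r}\langle A\nabla u,\nabla u\rangle\,\Gamma_A\,dx$ and $I_2(r)=\int_{B_r}|\nabla v|^2|x|^{2-N}\,dx$. Then $J'\ge 0$ amounts to
\[
(\log J)'(r)=-\frac{2\nu_{A,N}}{r}+\frac{I_1'(r)}{I_1(r)}+\frac{I_2'(r)}{I_2(r)}\ \ge\ 0\qquad\text{for a.e. }r,
\]
the absolute continuity and density technicalities being handled as in the classical proof. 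For the $v$--term this is exactly the classical ACF lemma, $I_2'(r)/I_2(r)\ge \tfrac{2}{r}\,\gamma(\Omega_2(r))$, where $\Omega_2(r)=\{v>0\}\cap S_r$ is seen as an open subset of $\mathbb S^{N-1}$ and, for open $\omega\subseteq\mathbb S^{N-1}$, $\gamma(\omega)$ is its characteristic exponent, i.e.\ the positive root of $\gamma(\gamma+N-2)=\lambda_1(\omega)$, $\lambda_1(\omega)$ being the first Dirichlet eigenvalue of the Laplace--Beltrami operator on $\omega$.

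The core is the matching lower bound for the anisotropic term, $I_1'(r)/I_1(r)\ge \tfrac{2}{r}\,\beta_A(\Omega_1(r))$ with $\Omega_1(r)=\{u>0\}\cap S_r$, where $\beta_A(\omega)$ — the \emph{anisotropic characteristic exponent} of $\omega$ — is the homogeneity degree of a positive solution of $\div(A\nabla\cdot)=0$ on the cone over $\omega$ vanishing on its lateral boundary. The cleanest route is the linear change of variables $y=A^{-1/2}x$: then $u$ becomes a nonnegative subharmonic function $w(y)=u(A^{1/2}y)$, $\Gamma_A(x)$ becomes $|y|^{2-N}$, and $B_r$ becomes the ellipsoid $E_r=\{\langle Ay,y\rangle<r^2\}$, so that $I_1(r)=\det(A^{1/2})\int_{E_r}|\nabla w|^2|y|^{2-N}\,dy$; running the ACF computation on the family of homothetic shells $\partial E_r$ — the Rellich identity for $w$ with the weight $|y|^{2-N}$, followed by a sharp Poincar\'e inequality on $\partial E_r$ with respect to its geometry — then yields $I_1'(r)/I_1(r)\ge \tfrac{2}{r}\,\gamma(\widehat T\,\Omega_1(r))$, where $\widehat T(\theta):=A^{-1/2}\theta/|A^{-1/2}\theta|$ is a fixed bi-Lipschitz diffeomorphism of $\mathbb S^{N-1}$ and $\widehat T\,\Omega_1(r)$ is the positivity set of $w$ on $\partial E_r$ read in spherical coordinates; in other words $\beta_A(\omega)=\gamma(\widehat T\omega)$. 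Equivalently, without changing variables, one integrates the energy identity $\div(\Gamma_A\,u\,A\nabla u)=\Gamma_A\langle A\nabla u,\nabla u\rangle+u\,\Gamma_A\,\div(A\nabla u)+u\,\langle A\nabla\Gamma_A,\nabla u\rangle$ over $B_r$, drops the middle term (using $u\ge 0$, $\div(A\nabla u)\ge 0$), and exploits $\div(A\nabla\Gamma_A)\le 0$ in the sense of distributions together with the pointwise identity $\langle A\nabla\Gamma_A,\nu\rangle=\tfrac{2-N}{r}\,\Gamma_A\,\langle A^{-1}\theta,\theta\rangle^{-1}$ on $S_r$; this route is more delicate, because the angular operator attached to $\div(A\nabla\cdot)$ is not self-adjoint (writing $u=|x|^{d}g(x/|x|)$ produces a first-order drift term $2d\,\langle A\theta,\nabla_{\mathbb S}g\rangle$) and the balls $B_r$ are geometrically mismatched with the ellipsoidal level sets of $\Gamma_A$ — a mismatch that is precisely why $\nu_{A,N}$ differs from the isotropic value $2$ — but the change of variables makes both issues harmless. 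Establishing this anisotropic differential inequality with the correct exponent is where I expect the real work to lie.

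Adding the two estimates gives $(\log J)'(r)\ge\tfrac{2}{r}\bigl(\beta_A(\Omega_1(r))+\gamma(\Omega_2(r))-\nu_{A,N}\bigr)$, and since $u\cdot v\equiv 0$ the sets $\Omega_1(r)$ and $\Omega_2(r)$ are disjoint open subsets of $S_r$. One therefore \emph{defines}
\[
\nu_{A,N}:=\inf\Bigl\{\beta_A(\omega_1)+\gamma(\omega_2)\ :\ \omega_1,\omega_2\subseteq\mathbb S^{N-1}\ \text{open, disjoint and nonempty}\Bigr\},
\]
which makes $(\log J)'\ge 0$ immediate, so the theorem reduces to $\nu_{A,N}\in(0,2)$. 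The lower bound $\nu_{A,N}>0$ is soft: if $\gamma(\omega_2)$ is small then $\lambda_1(\omega_2)$ is small, so by the Faber--Krahn inequality on $\mathbb S^{N-1}$ the measure of $\omega_2$ is close to $|\mathbb S^{N-1}|$, hence $|\omega_1|$ — and therefore $|\widehat T\omega_1|$, as $\widehat T$ is Lipschitz — is small, so $\lambda_1(\widehat T\omega_1)$, and with it $\beta_A(\omega_1)=\gamma(\widehat T\omega_1)$, is large; thus $\beta_A(\omega_1)+\gamma(\omega_2)$ stays bounded away from $0$. For the upper bound $\nu_{A,N}<2$ (equality holds when $A=\textrm{Id}$, by the Friedland--Hayman inequality) I would use a perturbative competitor: $\widehat T$ fixes every coordinate hemisphere $\{\theta_j>0\}$ but, being the sphere map of $A^{-1/2}\ne\textrm{Id}$, it strictly contracts volume near the axis $e_1$; choosing $\omega_1$ to be $\{\theta_j<0\}$ with a small region near $e_1$ deleted and $\omega_2=\mathbb S^{N-1}\setminus\overline{\omega_1}$, a Hadamard-type first variation of $\lambda_1$ about the two hemispheres shows that the excess $\gamma(\widehat T\omega_1)-1$ is dominated by the deficit $1-\gamma(\omega_2)$, whence $\beta_A(\omega_1)+\gamma(\omega_2)<2$.
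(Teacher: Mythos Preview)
Your overall architecture --- establish a differential inequality for each factor, define $\nu_{A,N}$ as an optimal-partition infimum, then show $0<\nu_{A,N}<2$ --- matches the paper's. The genuine gap is the anisotropic differential inequality $I_1'(r)/I_1(r)\ge \tfrac{2}{r}\beta_A(\Omega_1(r))$, which you assert but do not prove. Your primary route is the change of variables $y=A^{-1/2}x$: this indeed turns $u$ into a subharmonic $w$ and $\Gamma_A$ into $|y|^{2-N}$, but $B_r$ becomes the \emph{ellipsoid} $E_r=\{\langle Ay,y\rangle<r^2\}$, and you are now in the mirror-image predicament --- a spherically symmetric weight paired with an ellipsoidal domain. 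Running the ACF step on the shells $\partial E_r$ requires a Poincar\'e inequality on an ellipsoidal surface, whose first Dirichlet eigenvalue has no direct relation to the spherical quantity $\lambda_1(\widehat T\Omega_1)$; and $\partial_\nu|y|^{2-N}$ is not constant on $\partial E_r$, so the boundary terms do not balance as in the classical argument. In short, the change of variables does not ``make both issues harmless''; it relocates the ball/ellipsoid mismatch from $x$-space to $y$-space. This is precisely the obstruction the paper flags in its introduction.

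The paper resolves this by \emph{staying on Euclidean spheres} $S_r$ (so that the two positivity sets live on the same surface and the partition makes sense), at the cost of introducing the weight $\mu(x)=\langle A\nu,\nu\rangle$ and an anisotropic tangential splitting
\[
\langle A\nabla u,\nabla u\rangle=\langle A\nabla_\theta^A u,\nabla_\theta^A u\rangle+\frac{\langle A\nabla u,\nu\rangle^2}{\mu},\qquad \nabla_\theta^A u:=\nabla u-\frac{\langle A\nabla u,\nu\rangle}{\mu}\,\nu.
\]
Testing against $u\Phi_{A,\delta}$ (a regularized $\Gamma_A$) and crudely bounding $\Gamma_A$ and $\langle A\nabla\Gamma_A,\nu\rangle$ on $S_r$ yields
\[
I_1(r)\ \le\ \frac{a_N^{N/2}\,r}{2\gamma(\lambda(A,u_{x_0,r}))}\int_{S_r}\langle A\nabla u,\nabla u\rangle\,\Gamma_A,
\]
where $\lambda(A,\cdot)$ is the weighted Rayleigh quotient $\inf\int\langle A\nabla_\theta^A\varphi,\nabla_\theta^A\varphi\rangle\big/\int\varphi^2\mu$. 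The factor $a_N^{N/2}$ is the price of forcing the ellipsoidal weight $\Gamma_A$ onto spherical shells, and it is absorbed into the definition $\nu_{A,N}=\inf\bigl(a_N^{-N/2}\gamma(\lambda(A,u))+\gamma(\lambda(\textrm{Id},v))\bigr)$. Your proposed constant $\inf\bigl(\gamma(\widehat T\omega_1)+\gamma(\omega_2)\bigr)$ would be larger, hence a stronger theorem, but you have not supplied the inequality behind it. Finally, your $\nu_{A,N}<2$ via Hadamard variation is only a sketch; the paper instead tests its Rayleigh quotient explicitly on $(x_1^+,x_1^-)$ and shows $\lambda(A,x_1^+)<N-1$ by a direct monotonicity computation in the entries $a_i$.
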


The exponent $\nu_{A,N}$ is explicitly given as the solution of an optimal partition problem, involving eigenvalues of Dirichlet forms on the unit sphere $\S^{N-1}$, as in the original ACF formula. While in the isotropic case $A=\textrm{Id}$ the optimal value is known to be equal to $2$, in the anisotropic case $A \neq \textrm{Id}$ we shall show that such a spectral optimal value $\nu_{A,N}$ is always smaller than $2$ (Lemma \ref{lem: on ov}). One may still wonder whether or not it is possible to replace $\nu_{A,N}$ with $2$, in the monotonicity formula, with a strategy different to ours, thus improving Theorem \ref{thm: ACF}. It is worthwhile noticing that the answer is negative in general: the optimal exponent in the anisotropic monotonicity formula is strictly smaller than $2$, at least for suitable choices of $A$. This marks a striking difference with the symmetric-isotropic case, and we refer to Remark \ref{rem: opt} for a detailed discussion on this point.

\medskip

One of the difficulties in the proof of Theorem \ref{thm: ACF} is that the natural domains of integration for integrals involving $\Gamma_A$ are the ellipsoids $\mathcal{E}_r(x_0):=\{|A^{-1/2} (x-x_0)| < r\}$ rather than Euclidean balls. However, using different domains of integration for the two factors of $J$,  prevents us from reducing the proof of the monotonicity formula to an optimal partition problem, since $\pa \mathcal{E}_r(x_0)$ and $\pa B_r(x_0)$ do not coincide. In order to overcome this obstruction, we introduce suitable weights in the various integrations by parts, in analogy with the approach used in  \cite{Kuk} to prove an Almgren monotonicity formula for variable coefficients operators by avoiding the use of radial deformations or Riemannian metric considerations.

Finally, we mention that the possibility of proving a monotonicity formula without assuming the continuity of the phases was already considered in the literature (for instance in \cite{Vel}).

\medskip

\noindent{\textbf{Applications to segregation problems.}} The asymptotic analysis of phase separation in reaction-diffusion systems with multiple phases is a relevant field of application of the ACF monotonicity formula, as highlighted in the recent literature,  starting from \cite{CTV02,CTV05}. In particular, the ACF monotonicity formula can be usefully applied in order to prove a priori bounds of the solutions, independent of the singular perturbation parameter. Typical examples of such singularly perturbed systems fit under the comprehensive model
\[
    - \Delta u_i = f_i(x,u_i) - \beta g_i (u_1, \dots, u_k) \qquad \text{in $\Omega \subset \R^N$,}
\]
where the elliptic operator $-\Delta$ and the functions $\beta g_i \ge 0$ describe, respectively, the diffusion process and the interaction between the densities, and can assume different shapes according to the underlying phenomena. The parameter $\beta>0$ describes the strength of the competition, and one is particularly interested in understanding the behavior of solutions in the singular limit $\beta \to +\infty$, which is the limit of strong competition leading to total segregation. The following particular cases have been widely investigated in light of their relevance both from the mathematical point of view, and from the physical/biological one: 
\begin{itemize}
\item[($i$)] the \emph{Lotka-Volterra quadratic interaction} $g_i(u_1, \dots, u_k) = u_i \sum_{j \neq i} b_{ij} u_j$, see \cite{CTV05, CKL, DWZtams, TT12, SZ15, TVZ19} and references therein. 
\item [($ii$)] the \emph{variational cubic interaction} $g_i(u_1, \dots, u_k) = u_i \sum_{j \neq i} b_{ij} u_j^2$, with $b_{ij}=b_{ji}$ (it possesses a gradient structure since $g_i = \pa_{u_i} G$, where $G(u_1, \dots, u_k) = \sum_{i, j \neq i} b_{ij} u_i^2 u_j^2$); see \cite{CL08, CTV02, CTVind, NTTV10, DWZjfa, TT12, SZ15, SZ16, STTZ16} and references therein.
\end{itemize}
Besides, we mention \cite{Q13, VZ14} and \cite{TVZ14, TVZ16, DT19} for analogue studies in fully nonlinear or nonlocal contexts; \cite{CPQ, STTZ18} for long-range interaction models; and \cite{WWnon} for partial results involving a wider class of interaction terms. 

Most of these results concern doubly-symmetric settings, in the sense that there is a symmetry both in the interaction terms ($b_{ij}=b_{ji}$) and in the diffusion processes governing the spread of the components (all the equations are driven by the same operator). Up to our knowledge, asymmetric problems have been studied only in \cite{CTV05, TVZ19} ( in the case of Lotka-Volterra interactions with $b_{ij} \neq b_{ji}$), and in \cite{WWnon} (very general, possibly asymmetric, interaction in dimension $N=2$). In particular, nothing was known if each density $u_i$ is driven by a different operator $L_i$, and in what follows we describe our main results in this framework. We shall treat separately both the Lotka-Volterra quadratic interactions, and the variational cubic ones.

\medskip

\noindent \textbf{Lotka-Volterra quadratic interactions.} Let $N,k \ge 2$ be positive integers, and let $\Omega \subset \R^N$ be a bounded smooth domain. We consider the system
\begin{equation}\label{lv completa}
\begin{cases}
L_i u_{i}= \beta u_{i} \sum_{j \neq i} b_{ij} u_{j}, \quad u_i>0  &\text{ in $\Omega$}\\
u_{i} = \varphi_i &\text{ on $\pa \Omega$},
\end{cases} \qquad i=1,\dots,k.
\end{equation}
The operators $L_i$ are of type $L_i= \div (A_i \nabla(\,\cdot\,))$, where $A_1,\dots,A_k$ are positive definite symmetric matrixes with constant coefficients. The coefficients $b_{ij}$ are positive, so that the system is competitive, and not necessarily symmetric. Regarding the boundary data $\varphi_i$, we suppose that they are the restriction on $\pa \Omega$ of $C^{1,\gamma}(\overline{\Omega})$ functions, for some $\gamma \in (0,1)$, with the property that $\varphi_i \cdot \varphi_j \equiv 0$ in $\overline{\Omega}$.


\begin{theorem}\label{thm: lv}
Let $\mf{u}_\beta =(u_{1,\beta}, \dots, u_{k,\beta})$ be a solution of \eqref{lv completa} at fixed $\beta>1$. There exists $\bar \nu \in (0,2)$ depending only on $A_1,\dots,A_k$ and on $N$ such that the following holds: for any $\alpha \in (0, \bar \nu/2)$, there exists $C>0$ independent of $\beta$ such that $\|\mf{u}_\beta\|_{C^{0,\alpha}(\overline{\Omega})} \le C$. 
Moreover, as $\beta \to +\infty$, we have that up to a subsequence
\[
\mf{u}_\beta \to \mf{u} \quad \text{in $C^{0,\alpha}(\overline{\Omega})$ and in $H^1_{\loc}(\Omega)$, for every $\alpha \in (0, \bar \nu/2)$},
\]
and the limit $\mf{u}$ is a vector of nonnegative functions satisfying 
\[
\begin{cases}
L_i u_i = 0 & \text{in $\{u_i>0\}$, $i=1,\dots,k$,} \\
u_i \cdot u_j \equiv 0 & \text{in $\Omega$, for every $i \neq j$,} \\
u_i = \varphi_i & \text{on $\pa \Omega$.}
\end{cases}
\]
\end{theorem}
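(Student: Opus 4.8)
The plan is to follow the classical strong-competition scheme (as in \cite{CTV05,NTTV10}), the genuinely new point being that the $k$ densities are driven by $k$ \emph{different} operators $L_i$, which prevents grouping the competitors $\sum_{j\neq i}u_{j,\beta}$ under a single operator and forces a pairwise analysis; the anisotropic monotonicity formula of Theorem~\ref{thm: ACF} will enter only through the Liouville-type Theorem~\ref{thm: liou seg}. First I would establish the $\beta$-uniform a priori bounds. Since $b_{ij}>0$, each $u_{i,\beta}$ is an $L_i$-subsolution, so the maximum principle gives $\|u_{i,\beta}\|_{L^\infty(\Omega)}\le\|\varphi_i\|_{L^\infty(\pa\Omega)}$ uniformly in $\beta$; testing the $i$-th equation with $\zeta^2(u_{i,\beta}-\|\varphi_i\|_{L^\infty})$ for a cutoff $\zeta$ makes the competition term nonpositive and yields a uniform $H^1_{\loc}$ bound (near $\pa\Omega$ one argues with a $C^{1,\gamma}$ extension of $\varphi_i$ and with $\varphi_i\varphi_j\equiv0$ to reach the boundary), and testing with $\zeta^2$ then gives a uniform bound on $\beta\int_K u_{i,\beta}u_{j,\beta}\,dx$ for $K\Subset\Omega$, which is what forces segregation at the limit.

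Then comes the core step, the uniform $C^{0,\alpha}$ bound, which I would prove by a blow-up contradiction. Let $\overline\nu\in(0,2)$ be the exponent furnished by Theorem~\ref{thm: liou seg} for $A_1,\dots,A_k$ (up to the linear changes of variables reducing each pair $(A_i,A_j)$ to a diagonal matrix and the identity as in the proof of that theorem, it is the minimum over $i\neq j$ of the anisotropy exponents $\nu_{\cdot,N}$ of Theorem~\ref{thm: ACF} attached to those pairs). Fix $\alpha\in(0,\overline\nu/2)$ and assume $L_\beta:=\|\mf{u}_\beta\|_{C^{0,\alpha}(\overline\Omega)}\to\infty$ along a subsequence. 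A standard selection picks an index $i$ and points $x_\beta\neq y_\beta$ almost realizing the seminorm; setting $r_\beta:=|x_\beta-y_\beta|$ (which tends to $0$, by the $L^\infty$ bound), the rescalings $v_{i,\beta}(x):=L_\beta^{-1}r_\beta^{-\alpha}u_{i,\beta}(x_\beta+r_\beta x)$ satisfy $[\mf{v}_\beta]_{C^{0,\alpha}}\le1$, solve $L_iv_{i,\beta}=M_\beta\,v_{i,\beta}\sum_{j\neq i}b_{ij}v_{j,\beta}$ with $M_\beta=\beta L_\beta r_\beta^{2+\alpha}$, and are defined on domains invading $\R^N$ or a half-space. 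I would first check that a boundary blow-up can occur only where $\varphi_i$ vanishes (otherwise $\varphi_i\varphi_j\equiv0$ makes the competition negligible and $u_{i,\beta}$ uniformly $C^{1,\gamma}$ near $x_0$), so the half-space limit carries zero Dirichlet data. By the $C^{0,\alpha}$ normalization, the $L^\infty$ bound and elliptic estimates, $\mf{v}_\beta\to\mf{v}$ in $C^0_{\loc}\cap H^1_{\loc}$, with $\mf{v}\ge0$, $v_iv_j\equiv0$, each $v_i$ a global $L_i$-subsolution that is $L_i$-harmonic in $\{v_i>0\}$ and of growth $o(|x|^{\alpha})$; refining the selection so that $M_\beta\to+\infty$ and controlling the vanishing components by a Moser estimate with large potential, one arranges that $\mf{v}$ is nontrivial with at least two nonzero components. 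Theorem~\ref{thm: liou seg} forbids such a configuration — this is precisely where Theorem~\ref{thm: ACF} is invoked, on the surviving pair after linear normalization — and the contradiction follows.

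Granted the bound $\|\mf{u}_\beta\|_{C^{0,\alpha}(\overline\Omega)}\le C$ for all $\alpha<\overline\nu/2$, the passage $\beta\to+\infty$ is routine. Ascoli--Arzel\`a and a diagonal argument give $\mf{u}_\beta\to\mf{u}$ in $C^{0,\alpha}(\overline\Omega)$ for every such $\alpha$, up to a subsequence, whence $u_i=\varphi_i$ on $\pa\Omega$; from $\int_\Omega u_iu_j\,dx=\lim_\beta\int u_{i,\beta}u_{j,\beta}\,dx\le\lim_\beta C/\beta=0$ one gets $u_iu_j\equiv0$; the uniform $H^1_{\loc}$ bound gives weak and then, by a Caccioppoli argument, strong $H^1_{\loc}$ convergence. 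Finally, for $\phi\in C^\infty_c(\{u_i>0\})$ one has $u_{i,\beta}\ge c>0$ on $\supp\phi$ for large $\beta$, so a Moser iteration applied to the equation for $u_{j,\beta}$ ($j\neq i$), which there carries the potential $\ge\beta b_{ji}c$, gives $\beta u_{j,\beta}\to0$ uniformly on $\supp\phi$; passing to the limit in $\int\langle A_i\nabla u_{i,\beta},\nabla\phi\rangle\,dx=\beta\int u_{i,\beta}\sum_{j\neq i}b_{ij}u_{j,\beta}\phi\,dx$ yields $L_iu_i=0$ in $\{u_i>0\}$.

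The main obstacle will be the blow-up step: one must guarantee that at least two components survive with nontrivial oscillation — a single subharmonic-type limit of sublinear growth need not be constant (e.g.\ $\log^+|x|$ in $\R^2$), so the contradiction genuinely requires the two-phase structure — while handling all regimes of the rescaling constant $M_\beta$ in the presence of the different operators $L_i$, and treating the half-space (boundary) case. The a priori bounds and the final limit passage are, by contrast, standard.
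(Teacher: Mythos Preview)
Your overall scheme is right, but the heart of the blow-up argument contains a genuine gap. You write that, after rescaling with $M_\beta\to+\infty$, ``one arranges that $\mf{v}$ is nontrivial with at least two nonzero components'' and then invoke Theorem~\ref{thm: liou seg}. This cannot be arranged: the blow-up center and index are dictated by where the H\"older seminorm is (almost) attained, so only $v_1$ is guaranteed to be non-constant; the remaining components may all vanish in the limit, and in fact the paper \emph{uses} Theorem~\ref{thm: liou seg} precisely to conclude that they do. You are then left with a single nonnegative $L_1$-subharmonic function of growth $|x|^\alpha$, which, as you yourself note, need not be constant. So no contradiction follows from your plan.

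The missing idea, specific to the Lotka--Volterra structure, is an algebraic cancellation at the level of the approximating system. For each $\beta$ one has
\[
-\div(A_1\nabla v_{1,\beta})\;+\;\sum_{j\neq 1}\frac{b_{1j}}{b_{j1}}\,\div(A_j\nabla v_{j,\beta})
\;=\;M_\beta\sum_{j\neq 1}\sum_{h\neq 1,\,h\neq j}\frac{b_{1j}b_{jh}}{b_{j1}}\,v_{j,\beta}v_{h,\beta}\;\ge 0,
\]
because the terms involving $v_{1,\beta}$ cancel exactly. Passing to the weak $H^1_{\loc}$ limit and using $v_{j}\equiv 0$ for $j\neq 1$ (this is where Theorem~\ref{thm: liou seg} enters) yields $-\div(A_1\nabla v_1)\ge 0$ in $\Omega_\infty$. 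Combined with the $A_1$-subharmonicity you already have, $v_1$ is globally $A_1$-harmonic, and the one-function Liouville theorem (after a linear change of variables reducing $A_1$ to the identity) gives the contradiction, also in the half-space case with constant boundary datum. Note also that the reduction to the regime $M_\beta\to+\infty$ is itself nontrivial: the bounded-$M_\beta$ case produces a limit solving the full system in $\R^N$, and is ruled out by the Liouville theorem for \emph{subsolutions of the coupled system} (Theorem~\ref{thm: liou syst}, via the perturbed monotonicity formula), not by Theorem~\ref{thm: liou seg} for segregated states.
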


\medskip

\noindent \textbf{Variational cubic interactions.} Let $N,k \ge 2$ be positive integers, and let $\Omega \subset \R^N$ be a bounded smooth domain. We consider the system 
\begin{equation}\label{be completa}
\begin{cases}
-L_i u_{i}=f_{i,\beta}(x,u_{i}) -\beta u_{i} \sum_{j \neq i} b_{ij} u_{j}^2, \quad u_i>0  &\text{ in $\Omega$}\\
u_{i,\beta} = 0 &\text{ on $\pa \Omega$},
\end{cases} \qquad i=1,\dots,k.
\end{equation}
As in the Lotka-Volterra case, we assume that $L_i= \div (A_i \nabla(\,\cdot\,))$, with $A_i$ positive definite, symmetric, with constant coefficients. Moreover, we assume that the functions $f_{i,\beta}: \Omega \times \R \to \R$ are continuous, and that the coupling coefficients are positive and symmetric: $b_{ij}= b_{ji}>0$, so that the system has a variational structure.


\begin{theorem}\label{thm: be}
Let $\mf{u}_\beta =(u_{1,\beta}, \dots, u_{k,\beta})$ be a solution of \eqref{lv completa} at fixed $\beta>1$. Suppose that $\{\mf{u}_\beta:\beta>1\}$ is uniformly bounded in $L^\infty(\Omega)$, and that $f_{i,\beta}$ maps bounded sets of $\Omega \times \R$ in bounded sets of $\R$, uniformly with respect to $\beta$. Then there exists $\bar \nu \in (0,2)$ depending only on $A_1,\dots,A_k$ and on $N$ such that the following holds: for every $\alpha \in (0, \bar \nu/2)$ there exists $C>0$ independent of $\beta$ such that $\|\mf{u}_\beta\|_{C^{0,\alpha}(\overline{\Omega})} \le C$. 
Moreover, up to a subsequence, we have that
\[
\mf{u}_\beta \to \mf{u} \quad \text{in $C^{0,\alpha}(\overline{\Omega})$ and in $H^1(\Omega)$, for every $\alpha \in (0, \bar \nu/2)$}.
\]
If $f_{i,\beta} \to f_i$ locally uniformly as $\beta \to +\infty$, then the limit function $\mf{u}$ satisfies
\[
\begin{cases}
-L_i u_i = f_i(x,u_i) & \text{in $\{u_i>0\}$, $i=1,\dots,k$,} \\
u_i \cdot u_j \equiv 0 & \text{in $\Omega$, for every $i \neq j$,} \\
u_i = 0 & \text{on $\pa \Omega$,}
\end{cases}
\]
and the domain variation formula
\beq\label{dom var lim 1}
\begin{split}
2\int_{\Omega}  \sum_i \left( \langle dY A_i \nabla u_{i}, \nabla u_{i} \rangle -  f_{i}(x,u_{i}) \langle \nabla u_{i},Y \rangle \right)  - \int_{\Omega} \div Y  \sum_i \langle A_i \nabla u_{i}, \nabla u_{i} \rangle = 0.
\end{split}
\eeq
\end{theorem}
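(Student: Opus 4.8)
The plan is to follow the standard scheme for singularly perturbed competition--diffusion systems (as in the references on variational cubic interactions quoted above), the new ingredient being that the anisotropic Alt--Caffarelli--Friedman formula of Theorem~\ref{thm: ACF} replaces the classical one. \emph{Reduction.} For each pair $i\neq j$, the linear change of variables $x\mapsto A_j^{1/2}x$ followed by an orthogonal diagonalization and a dilation (exactly as in the proof of Theorem~\ref{thm: liou seg}) turns the pair $(L_i,L_j)$ into a pair of operators of the form $(\div(A^{(ij)}\nabla\,\cdot\,),\Delta)$, with $A^{(ij)}$ diagonal of lowest eigenvalue $1$; Theorem~\ref{thm: ACF} then provides an exponent $\nu_{ij}\in(0,2]$, with $\nu_{ij}<2$ whenever $A_i\neq A_j$ (Lemma~\ref{lem: on ov}). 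Set $\bar\nu:=\min_{i\neq j}\nu_{ij}$. \emph{Uniform H\"older bounds.} Fix $\alpha\in(0,\bar\nu/2)$ and argue by contradiction: if $\|\mf u_\beta\|_{C^{0,\alpha}(\overline\Omega)}$ is not uniformly bounded, along some $\beta_n\to+\infty$ the scale invariant quantity
\[
M_n:=\max_i\ \sup_{x\in\overline\Omega,\ r>0}\ r^{-\alpha}\Big(\sup_{B_r(x)\cap\Omega}u_{i,\beta_n}-\inf_{B_r(x)\cap\Omega}u_{i,\beta_n}\Big)
\]
diverges; select an almost maximizing index, centre $x_n$ and radius $r_n$, and blow up by setting $v_{i,n}(y):=(M_nr_n^\alpha)^{-1}u_{i,\beta_n}(x_n+r_ny)$. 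Because the $A_i$ have constant coefficients the $v_{i,n}$ solve a system of the same form on expanding domains, are normalized, equi-H\"older on compact sets, and the $f_{i,\beta_n}$-terms vanish in the limit. According to the behaviour of the rescaled competition parameter $\beta_nM_n^2r_n^2$, the limit $\mf v$ is either a nonconstant solution of an $S$-type system on $\R^N$ (or on a half-space, if $x_n\to\partial\Omega$, using $u_{i,\beta}=0$ there), or a nonconstant nonnegative vector with $v_iv_j\equiv0$ and $L_iv_i=0$ in $\{v_i>0\}$; in both cases at least two components are nontrivial and globally $\alpha$-H\"older. Applying Theorem~\ref{thm: ACF} to this pair of components — read as a Liouville-type statement, precisely as in Theorem~\ref{thm: liou seg} — the associated functional $J$ is nondecreasing and bounded, hence constant, which forces a rigid one-dimensional profile incompatible with the normalization of $\mf v$; contradiction. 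The threshold $\bar\nu/2$ in the Liouville theorem is exactly the origin of the restriction $\alpha<\bar\nu/2$.

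\emph{Compactness, segregated limit, limit equations.} Testing \eqref{be completa} with $u_{i,\beta}\in H^1_0(\Omega)$ and using the $L^\infty$ bounds on $\mf u_\beta$ and $f_{i,\beta}$ yields $\|\nabla u_{i,\beta}\|_{L^2(\Omega)}\le C$ and $\beta\int_\Omega u_{i,\beta}^2\sum_{j\neq i}b_{ij}u_{j,\beta}^2\le C$. Up to a subsequence $\mf u_\beta\to\mf u$ uniformly (Step~1 plus Ascoli--Arzel\`a) and weakly in $H^1_0(\Omega)$; the energy bound gives $\int_\Omega u_i^2u_j^2=0$, hence $u_iu_j\equiv0$ by continuity, and $u_i=0$ on $\partial\Omega$. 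Passing to the limit in the weak form of $-L_iu_{i,\beta}\le f_{i,\beta}(x,u_{i,\beta})$ gives $-L_iu_i\le f_i(x,u_i)$ in $\Omega$, while on a compact $K\subset\{u_i>0\}$, where $u_{i,\beta}\ge\delta>0$ and $u_{j,\beta}\to0$ uniformly, bounding the competition term of the $i$-th equation through the subsolution property of the $u_{j,\beta}$ shows it tends to $0$ in $L^1(K)$, so $-L_iu_i=f_i(x,u_i)$ in $\{u_i>0\}$. Finally, writing $-L_iu_i=f_i(x,u_i)\,dx-\mu_i$ with $\mu_i\ge0$ a Radon measure supported in $\{u_i=0\}$ and testing with $u_i$ gives $\int_\Omega\langle A_i\nabla u_i,\nabla u_i\rangle=\int_\Omega f_i(x,u_i)u_i$; comparing with the same identity for $u_{i,\beta}$ and invoking weak lower semicontinuity of $v\mapsto\int_\Omega\langle A_i\nabla v,\nabla v\rangle$ forces $\beta\int_\Omega u_{i,\beta}^2\sum_{j}b_{ij}u_{j,\beta}^2\to0$ and $\int_\Omega\langle A_i\nabla u_{i,\beta},\nabla u_{i,\beta}\rangle\to\int_\Omega\langle A_i\nabla u_i,\nabla u_i\rangle$, i.e. $u_{i,\beta}\to u_i$ strongly in $H^1_0(\Omega)$.

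\emph{Domain variation formula.} For $Y\in C^\infty_c(\Omega;\R^N)$, multiplying the $i$-th equation of \eqref{be completa} by $\langle\nabla u_{i,\beta},Y\rangle$ and integrating by parts gives the Pohozaev--Rellich identity
\[
2\int_\Omega\langle dY\,A_i\nabla u_{i,\beta},\nabla u_{i,\beta}\rangle-\int_\Omega\div Y\,\langle A_i\nabla u_{i,\beta},\nabla u_{i,\beta}\rangle=2\int_\Omega f_{i,\beta}(x,u_{i,\beta})\langle\nabla u_{i,\beta},Y\rangle-2\beta\int_\Omega u_{i,\beta}\sum_{j\neq i}b_{ij}u_{j,\beta}^2\langle\nabla u_{i,\beta},Y\rangle.
\]
Summing over $i$ and using $b_{ij}=b_{ji}$, the competition terms collapse to $\tfrac\beta2\int_\Omega\div Y\sum_{i<j}b_{ij}u_{i,\beta}^2u_{j,\beta}^2$, which vanishes in the limit by the previous step; the remaining terms pass to the limit thanks to the strong $H^1$ convergence and $f_{i,\beta}\to f_i$ locally uniformly, producing \eqref{dom var lim 1}.

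The main obstacle is the uniform H\"older estimate, and more precisely the blow-up dichotomy together with the use of the anisotropic monotonicity formula of Theorem~\ref{thm: ACF} as a Liouville theorem for the segregated limit: this is the point where the genuinely new feature appears, namely that the exponent cannot be pushed beyond $\bar\nu/2<1$, unlike the isotropic theory where every exponent below $1$ is reached; the sharpness of this limitation is connected to Remark~\ref{rem: opt}.
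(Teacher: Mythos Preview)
Your outline for compactness, the segregated limit equations, strong $H^1$ convergence, and the final domain variation identity is essentially correct and matches the paper. The genuine gap is in the uniform H\"older estimate, and it is not a matter of detail: the assertion that after blow-up ``in both cases at least two components are nontrivial'' is unjustified, and in fact false. The normalization of the blow-up only forces the component $v_1$ (the one realizing the extremal H\"older quotient) to be nonconstant; nothing prevents all the other $v_j$ from vanishing identically in the limit. Indeed, this is exactly what happens: applying Theorem~\ref{thm: liou seg} (or Theorem~\ref{thm: liou syst} in the bounded-competition case) kills $k-1$ of the components and leaves you with a single nonconstant $v_1$ that is $A_1$-harmonic in $\{v_1>0\}$, but a priori only there. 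At that point there is no pair of nontrivial segregated functions to which you could apply the ACF formula, so your contradiction does not fire.

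The real issue is to show that $v_1$ is $A_1$-harmonic on all of $\R^N$. In the Lotka--Volterra case this follows by summing the equations (cf.\ the derivation of \eqref{1033}), but with cubic interaction that trick fails because the exponents are not matched. The paper's route is to write the Pohozaev--Rellich identity for the blow-up sequence $\mf v_\beta$ (this is where $b_{ij}=b_{ji}$ is used), pass to the limit using the strong $H^1_{\loc}$ convergence and the vanishing of the interaction energy to obtain a clean domain variation formula for $v_1$ alone on $\R^N$, and then read off from it the Almgren frequency monotonicity for $v_1$ centered at any point. Almgren monotonicity forces $\{v_1=0\}$ to be an affine subspace of dimension at most $N-2$, hence of zero $H^1$-capacity, so $v_1$ extends harmonically across it; being globally $\alpha$-H\"older, it must be constant, the desired contradiction. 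In short: you already derive the domain variation identity at the level of $\mf u_\beta$ for the last part of the theorem, but you also need it at the blow-up level, for $\mf v_\beta$, to close the H\"older estimate.
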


Theorems \ref{thm: lv} and \ref{thm: be} can be considered as the perfect anisotropic counterpart of the main results in \cite{CTV05} and \cite{NTTV10}. The value $\bar \nu$ is given explicitly as the minimum of a finite number of optimal exponents appearing in Theorem \ref{thm: ACF} for different choices of $A$. In particular, given $A_1,\dots,A_k$, the value $\bar \nu$ is the same both for Theorems \ref{thm: lv} and \ref{thm: be}. The proofs of these results follow the blow-up strategy developed in \cite{CTV05, NTTV10}. In these contexts, the ACF monotonicity formula is crucially employed to obtain some Liouville-type theorems for the limit configuration in the blow-up.  

The results here are not stated in the broader setting, and some extensions could be proved by combining the method presented here with others already used in the literature. For instance, it would not be difficult to add a nonlinear term $f_{i,\beta}$ in system \eqref{lv completa}, or to obtain local interior estimates under no regularity or boundedness assumptions on $\Omega$. We refer the interested reader to \cite{STTZ16} for further generalizations. We preferred to treat the prototypical problems \eqref{lv completa} and \eqref{be completa}, in analogy with \cite{CTV05, NTTV10}, in order to emphasize the main differences and difficulties which one has to face when passing from the isotropic setting to the anisotropic one, without inessential technicalities.

\begin{remark}
In the setting of Theorem \ref{thm: lv}, the existence of $\mf{u}_\beta$ can be proved by using Leray-Schauder degree theory as in \cite[Theorem 2.1]{CTV05}, or fixed point arguments as in \cite[Theorem 4.1]{CPQ}. Regarding Theorem \ref{thm: be}, the existence of $\mf{u}_\beta$ can be proved by variational methods (minimization or min-max), under different assumptions of $f_{i,\beta}$.

It is by now well known that the assumption that $\{\mf{u}_\beta\}$ is uniformly bounded in $L^\infty(\Omega)$ in Theorem \ref{thm: be} is natural and very mild. For instance, it is satisfied by family of solutions sharing the same variational characterization, at each $\beta>1$ fixed. In Theorem \ref{thm: lv}, such an assumption is implicit, since it follows from the sign of $\mf{u}_\beta$, the subharmonicity, and the boundary conditions. 
\end{remark}

Once that Theorems \ref{thm: lv} and \ref{thm: be} are proved, it is natural to investigate the free-boundary problem arising in the limit: that is, to understand the regularity of the limit configuration $\mf{u}$ and of the associated nodal set  $\Gamma=\{u_i=0 \ \text{for every $i$}\}$. From this point of view, the local symmetric case is essentially understood as a consequence of the results in \cite{CTVind, CKL, CL08, NTTV10, TT12}: $\mf{u}$ is Lipschitz continuous, and $\Gamma$ is the union of $C^{1,\alpha}$-hypersurface of dimension $N-1$, up to a singular set of dimension $N-2$. Moreover, in a neighborhood of each point $x_0$ on the regular part of $\Gamma$ precisely two components of $\mf{u}$ are different from $0$, and their difference is smooth (reflection law). The anisotropic case offers a number of challenges, and will be the object of future investigations. Here we only address a simplified setting, and in particular a $2$ components Lotka-Volterra system, in order to understand the type or result we shall look at. We recall that for systems of two components it is always possible to suppose that $A_2=\textrm{Id}$, and that $A_1=A$ is a diagonal matrix with lowest eigenvalue equal to $1$. Thus, we define $\nu_{A,N}$ as in Theorem \ref{thm: ACF}. Moreover, if necessary replacing $u_1$ with $a_{21}/a_{12} u_1$, we can suppose to have symmetry of the coupling coefficients $a_{12}=a_{21}$. 

\begin{theorem}\label{thm: limit lv}
In the previous setting, let $\mf{u}=(u,v)$ be a limit profile for solutions to \eqref{lv completa}, given by Theorem \ref{thm: lv}. Then $w=u-v$ is a weak solution of the quasi-linear equation
\begin{equation}\label{qu eq}
\div(B(w) \nabla w) = 0 \quad \text{in $\Omega$},
\eeq
with $B(w) = (A-\textrm{Id}) \chi_{\{w>0\}} + \textrm{Id}$. We have that $w$ is $\alpha$-H\"older continuous for every exponent $\alpha \in (0, \nu_{A,N}/2)$. Moreover, $\mu = \Delta w^-$ is a positive and locally finite measure with support in $\{w=0\}$, and has $\sigma$-finite $(N-1)$-dimensional Hausdorff measure. Furthermore for $\mu$-a.e. $x \in \{w=0\}$ there exists $r>0$ such that $\{w=0\} \cap B_r(x)$ is a $C^{1,\alpha}$ graph.
\end{theorem}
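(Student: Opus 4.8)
The plan is to combine the information coming from Theorem \ref{thm: lv} with the anisotropic ACF formula of Theorem \ref{thm: ACF} to control the blow-up limits of $w=u-v$ at points of the nodal set, and then to invoke the regularity machinery for the two-phase problem developed in the isotropic case (following \cite{CTVind, CL08, NTTV10, TT12}), checking that every step survives the replacement of $\Delta$ by the two constant-coefficient operators $\div(A\nabla\,\cdot\,)$ and $\Delta$. First I would record that, by Theorem \ref{thm: lv}, $\mf u=(u,v)$ consists of nonnegative functions with $uv\equiv 0$, with $\div(A\nabla u)=0$ in $\{u>0\}$ and $\Delta v=0$ in $\{v>0\}$, and $u,v\in C^{0,\alpha}$ for every $\alpha<\nu_{A,N}/2$; hence $w=u-v$ is continuous with the same H\"older exponent. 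Testing the equations against smooth functions and using $uv\equiv 0$, one checks that $\div(B(w)\nabla w)=0$ in $\Omega$ in the weak sense, where $B(w)=(A-\mathrm{Id})\chi_{\{w>0\}}+\mathrm{Id}$, since on $\{w>0\}=\{u>0\}$ the operator reads $\div(A\nabla w)=\div(A\nabla u)=0$, on $\{w<0\}=\{v>0\}$ it reads $\Delta w=-\Delta v=0$, and the interface contributions cancel because $\nabla u=0$ a.e. on $\{u=0\}$ and $\nabla v=0$ a.e. on $\{v=0\}$.

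Next I would establish that $\mu:=\Delta w^{-}$ is a nonnegative, locally finite Radon measure supported in $\{w=0\}$. Nonnegativity is immediate since $w^{-}=v$ is subharmonic; the support statement follows because $w^{-}$ is harmonic on the open set $\{w<0\}$ and vanishes on the open set $\{w>0\}$. Local finiteness of $\mu$ is obtained from the ACF monotonicity formula: fixing $x_0\in\{w=0\}$, Theorem \ref{thm: ACF} applied to $u$ and $v$ shows that $J(u,v,x_0,r)$ is monotone, hence bounded near $r=0$; by the standard Alt-Caffarelli-Friedman argument this yields, for each factor, a growth estimate of the form $\int_{B_r(x_0)}\langle A\nabla u,\nabla u\rangle\,\Gamma_A\,dx\le C r^{\nu_{A,N}}$ and $\int_{B_r(x_0)}|\nabla v|^2|x-x_0|^{2-N}\,dx\le C r^{\nu_{A,N}}$ whenever both phases are nontrivial near $x_0$, which in turn bounds $\int_{B_r(x_0)}|\nabla v|^2\,dx$ and then, via the equation $\Delta v=\mu$ tested against a cutoff, gives $\mu(B_r(x_0))\le C r^{N-2+\nu_{A,N}}$; covering and summing yields that $\mu$ has $\sigma$-finite $(N-1)$-dimensional Hausdorff measure (here the exponent $\nu_{A,N}>0$, together with $N\ge 2$, is exactly what makes the dimension estimate close).

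The core of the theorem is the last assertion: for $\mu$-a.e.\ $x_0$ the set $\{w=0\}\cap B_r(x_0)$ is a $C^{1,\alpha}$ graph. Here I would follow the blow-up scheme: at $\mu$-a.e.\ point $x_0$ one shows, using the monotonicity of $J$ and its scaling, that the rescalings $w_{x_0,r}(y)=w(x_0+ry)/\rho(r)$ (with $\rho(r)$ the natural $L^2(\partial B_1)$-type normalization) converge, along subsequences, to a nontrivial global solution $w_\infty$ of $\div(B(w_\infty)\nabla w_\infty)=0$ which is homogeneous of degree $1$ and for which both $w_\infty^{+}$ and $w_\infty^{-}$ are nontrivial (nontriviality of both phases at $\mu$-a.e.\ point comes from the characterization of the points where $\mu$ charges mass). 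A Liouville-type classification — in the spirit of the Liouville theorems announced in the introduction and of the two-dimensional analysis in \cite{CDS18} — then forces $w_\infty$ to be, up to rotation and scaling, the one-dimensional two-phase profile $w_\infty(y)=\alpha_A y_1^{+}-y_1^{-}$ adapted to the pair $(A,\mathrm{Id})$, so in particular $\{w_\infty=0\}$ is a hyperplane; by the usual improvement-of-flatness / epiperimetric-type argument adapted to the constant-coefficient anisotropic operator (again as in \cite{CDS18, NTTV10}) this flatness is inherited at positive scales, giving that $\{w=0\}$ is a $C^{1,\alpha}$ graph near $x_0$.

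The main obstacle I anticipate is precisely this last classification-plus-flatness step: unlike the isotropic case, where the ACF exponent is the optimal value $2$ and homogeneous degree-one blow-ups are immediately linear, here $\nu_{A,N}<2$ in general (Lemma \ref{lem: on ov} and Remark \ref{rem: opt}), so one cannot a priori rule out homogeneous blow-up profiles of degree $\nu_{A,N}/2<1$ supported on a cone; the point is to use the structure of the specific two-phase operator $B(w)$ — the fact that across a flat interface the transmission condition between $\div(A\nabla\,\cdot\,)$ and $\Delta$ admits the explicit one-dimensional solution — to show that at the points charged by $\mu$ (where, by the growth estimate, the blow-up has homogeneity exactly $1$) only the linear profile can occur. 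This is exactly the kind of two-phase rigidity proved, in dimension $2$, in \cite{CDS18}, and in higher dimension it should follow from the matching of the ACF value along the linear competitor together with the equality case in the monotonicity formula; making this rigorous in every dimension, without the continuity-of-the-optimal-exponent crutch available in the symmetric case, is the technical heart of the argument.
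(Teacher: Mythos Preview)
There are two issues with your proposal, one a genuine gap and one a strategic miscalculation.

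The gap is in your derivation of the weak equation $\div(B(w)\nabla w)=0$. You argue from the \emph{limit} profile alone: the PDE holds on $\{w>0\}$ and on $\{w<0\}$, and ``interface contributions cancel because $\nabla u=0$ a.e.\ on $\{u=0\}$''. The latter fact only tells you that the integrand $\langle B(w)\nabla w,\nabla\varphi\rangle$ vanishes a.e.\ on $\{w=0\}$; it does \emph{not} let you test the two separate interior equations against a $\varphi\in C^\infty_c(\Omega)$ whose support crosses the interface. What is actually needed is the distributional identity $\div(A\nabla u)=\Delta v$ in all of $\Omega$, i.e.\ a transmission condition, and this is not contained among the conclusions of Theorem~\ref{thm: lv}. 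The paper obtains it by going back to the \emph{pre-limit}: since $b_{12}=b_{21}$, the Lotka--Volterra system gives the exact identity $\div(A\nabla u_\beta)=\beta b_{12}u_\beta v_\beta=\Delta v_\beta$ in $\Omega$ for every $\beta$, hence $\int_\Omega\langle A\nabla u_\beta-\nabla v_\beta,\nabla\varphi\rangle=0$; passing to the limit via the $H^1_{\loc}$ convergence from Theorem~\ref{thm: lv} gives the weak equation directly, with no interface analysis.

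The strategic miscalculation is that, once the weak equation is established, the paper does not carry out any of the ACF/blow-up/classification/flatness program you sketch: it simply invokes the main theorem of Andersson--Mikayelyan \cite{AM12}, whose hypothesis is exactly ``$w$ is a weak solution of $\div(B(w)\nabla w)=0$'' and whose conclusions are precisely the statements about $\mu=\Delta w^-$ (nonnegativity, local finiteness, support in $\{w=0\}$, $\sigma$-finite $\mathcal{H}^{N-1}$-measure, $C^{1,\alpha}$ graph at $\mu$-a.e.\ point). Your attempt to reprove \cite{AM12} from the anisotropic ACF formula runs straight into the obstacle you yourself flag---$\nu_{A,N}<2$, so Theorem~\ref{thm: ACF} does \emph{not} force blow-ups to be homogeneous of degree~$1$---and also contains a smaller slip: monotonicity of $J$ bounds only the \emph{product} of the two energy factors, so one cannot conclude $\int_{B_r}|\nabla v|^2|x-x_0|^{2-N}\le Cr^{\nu_{A,N}}$ for each factor separately without extra input. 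In \cite{AM12} these difficulties are bypassed by a monotonicity argument tailored to the single quasilinear equation rather than to a segregated pair.
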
 

The theorem follows directly from the convergence in Theorem \ref{thm: lv} and the main result in \cite{AM12} concerning the nodal set of solutions of equations like \eqref{qu eq}. The regularity theory both for the solutions to \eqref{qu eq}, and for their nodal set, seems to be a difficult task. Up to our knowledge, it is only known that weak solutions are H\"older continuous for some exponent. Concerning the nodal set, the only available results are those in \cite{AM12}.

\begin{remark}
From equation \eqref{qu eq}, it is not difficult to deduce that limits of the Lotka-Volterra system \eqref{lv completa} with $2$ components satisfy the free-boundary condition 
\[
|\nabla u| \langle A \nu, \nu \rangle = |\nabla v|
\]
on the regular part of $\{u=0=v\}$; indeed, if $\omega \subset \subset \Omega$ and $\gamma=\{u=0=v\} \cap \omega$ is $C^1$, then 
\[
0 = \int_{\omega \cap \{u>0\}} \langle A \nabla u, \nabla \varphi \rangle - \int_{\omega \cap \{v>0\}}\langle \nabla v, \nabla \varphi\rangle = -\int_{\gamma} \varphi \left( \langle A \nabla u, \nu \rangle - \langle \nabla v, \nu_2 \rangle \right),
\] 
for every $\varphi \in C^\infty_c(\omega)$, with $\nu=-\nu_2 = \frac{\nabla v}{|\nabla v|} = - \frac{\nabla u}{|\nabla u|}$.

Instead, under mild additional assumptions on the nonlinear terms $f_{i,\beta}$, limits of gradient-type systems \eqref{be completa} with $2$ components satisfy the free-boundary condition 
\[
|\nabla u|^2 \langle A \nu, \nu \rangle = |\nabla v|^2
\]
on the regular part of $\{u=0=v\}$. This follows directly from the domain variation formula \eqref{dom var lim 1}, by reasoning as in \cite[Proposition 2.1]{DWZjfa}. Therefore, the limit classes for problems \eqref{lv completa} and \eqref{be completa} do not coincide. This is another interesting difference with respect to the analogue symmetric problems, where the limit profiles can be studied in a unified way, and in particular the free-boundary condition reads $|\nabla u| = |\nabla v|$ both for Lotka-Volterra and for variational interactions (we refer to \cite[Section 8]{TT12} for more details).
\end{remark}

\medskip

\noindent \textbf{Structure of the paper.} In Section \ref{sec: ACF} we prove the anisotropic monotonicity formula and some variants concerning non-segregated solutions of some competitive systems. In Section \ref{sec: liou} we deduce various Liouville-type theorems. Such theorems will be used in Section \ref{sec: lv} and \ref{sec: be}, which contain the proofs of Theorems \ref{thm: lv}, \ref{thm: limit lv} and \ref{thm: be}.  

\medskip

\noindent \textbf{Acknowledgements.} We are grateful to Daniela De Silva for useful discussions concerning the paper \cite{CDS18}.

\section{Anisotropic Alt-Caffarelli-Friedman monotonicity formula}\label{sec: ACF} 

Let $A$ be a positive definite $N \times N$ diagonal matrix with constant coefficients, with lowest eigenvalue equal to $1$:
\begin{equation}\label{def A}
A:=\textrm{diag}(a_1,\dots,a_N), \quad \text{with } 1=a_1 \le a_2 \le \dots \le a_N.
\end{equation}
We introduce at first the basic notation which will be used throughout this and the next sections.
\begin{itemize}
\item $\Gamma_A$ denotes the function defined in \eqref{def fun}. Notice that $\Gamma_A \equiv 1$ in dimension $N=2$, while for $N \ge 3$ it is the (multiple of) fundamental solution of $\div(A \nabla \cdot)$
(for the explicit expression of $\Gamma_A$, we refer to \cite[Chapter 5, pp. 214]{BeJoSc}). 
\item As in \cite{Kuk}, we define
\begin{equation}\label{def mu}
\mu(x) :=\langle A \frac{x}{|x|}, \frac{x}{|x|}  \rangle \quad \implies \quad  1 \le \mu \le a_N,
\end{equation}
where $\langle \cdot, \cdot \rangle$ denotes the Euclidean scalar product.
\item Let $\nu$ be the outer unit vector on a sphere $S_r(x_0)$. We consider the tangential gradient (computed with respect to the scalar product induced by $A$)
\beq\label{def tang}
\nabla_\theta^A \varphi := \nabla \varphi - \frac{\langle A \nabla \varphi, \nu \rangle}{\langle A\nu, \nu \rangle} \nu = \nabla \varphi - \frac{\langle A \nabla \varphi, \nu \rangle}{\mu(x-x_0)} \nu.
\eeq
In this way, the gradient can be splitted in its normal and tangential part as usual:
\beq\label{rel tan}
\langle A \nabla \varphi, \nabla \varphi \rangle= \langle A \nabla_\theta^A \varphi, \nabla_\theta^A \varphi \rangle + \frac{\langle A \nabla \varphi, \nu \rangle^2}{\mu(x-x_0)};
\eeq
notice that, in case $A=\textrm{Id}$, this identity boils down to $|\nabla \varphi|^2 = |\nabla_\theta \varphi|^2 + (\pa_\nu \varphi)^2$.
\item For $u \in H^1(\S^{N-1})$, we consider the optimal value
\[
\lambda(A,u) := \inf\left\{ \frac{\int_{\S^{N-1}}  \langle A \nabla_\theta^A \varphi, \nabla_\theta^A \varphi \rangle \,d\sigma   }{\int_{\S^{N-1}} \varphi^2 \mu \,d\sigma} 
\left| \begin{array}{l} \varphi \in H^1(\S^{N-1} \setminus \{0\}) \ \text{and}\\ \mathcal{H}^{N-1}(\{\varphi \neq 0\} \cap \{u=0\}) = 0.  \end{array}\right.
\right\}.
\]
where $d\sigma = d\sigma_x$ and $\mathcal{H}^{N-1}$ stay for the usual $(N-1)$-dimensional Hausdorff measure. Notice that, if $u$ is also continuous, then $\lambda(\textrm{Id},u)$ is the first eigenvalue of the Laplace-Beltrami operator with homogeneous Dirichlet boundary condition on the open set $\{\xi \in \S^{N-1}: u(\xi) >0\}$.
\item For $u \in H^1(S_r(x_0))$, we set $u_{x_0,r}(\xi) = u(x_0+r \xi) \in   H^1(S_1) \simeq H^1(\S^{N-1})$. 
\item We define $\gamma: \R^+ \to \R^+$
\[
\gamma(t) := \sqrt{\left(\frac{N-2}{2}\right)^2 + t\,} - \frac{N-2}{2}.
\]
\item For $N \ge 3$ and $\delta>0$, we define $\phi_\delta: [0,+\infty) \to (0,+\infty)$ and $\Phi_\delta: \R^N \to (0,+\infty)$ by
\begin{equation}\label{reg_fun}
\phi_\delta(r) =  \begin{cases}
\frac{N}2 \delta^{2-N} + \frac{2-N}2\delta^{-N} r^2 & \text{if }0 \le r \le \delta \\
r^{2-N} & \text{if $r>\delta$},
\end{cases} \qquad \Phi_\delta(x) = \phi_\delta(|x|).
\end{equation}
$\Phi_\delta$ is a $C^1$ positive superharmonic function in $\R^N$. Therefore, $\Phi_{A,\delta}(x) = \Phi_\delta(A^{-\frac12} x)$ is in turn a $C^1$ positive function in $\R^N$, with the properties that $\div(A \nabla \Phi_{A,\delta}) \le 0$, and $\Phi_{A,\delta}= \Gamma_A$ in the set $\mathcal{E}_\delta^c = \{|A^{-\frac12}x| \ge \delta\}$. The set $\cE_r:= \{|A^{-\frac12}x| < r\}$ is an ellipsoid, and, since $a_1=1$, 
\[
B_{r} \subset \cE_r \subset B_{r a_N^{1/2}}.
\]
\end{itemize}

\begin{remark}\label{rmk: tang grad}
It is convenient to observe that 
\[
\frac{a_2}{a_N} \int_{\omega} |\nabla_\theta \varphi|^2\,d\sigma\le \int_{\omega}  \langle A \nabla_\theta^A \varphi, \nabla_\theta^A \varphi \rangle \,d\sigma \le a_N a_{N-1}\int_{\omega} |\nabla_\theta \varphi|^2 \,d\sigma
\]
for any $\varphi \in H^1(\omega)$, for any $\omega \subset \S^{N-1}$, so that $\int_{\omega}  \langle A \nabla_\theta^A \varphi, \nabla_\theta^A \varphi \rangle$ is a semi-norm in $H^1(\omega)$, equivalent to the standard one $\int_{\omega} |\nabla_\theta \varphi|^2$. The above inequality can be easily checked as follows:
\begin{align*}
\int_{\omega}  \langle A \nabla_\theta^A \varphi, \nabla_\theta^A \varphi \rangle &= \int_{\omega} \langle A \nabla \varphi, \nabla \varphi \rangle - \frac{\langle A \nabla \varphi, \nu \rangle^2}{\langle A \nu, \nu \rangle}  \\
& = \int_{\omega} \sum_i a_i \varphi_{x_i}^2 - \frac{\left( \sum_i a_i \varphi_{x_i} x_i \right)^2}{\sum_i a_i x_i^2} \\
& = \int_{\omega}  \frac{1}{\sum_i a_i x_i^2}  \sum_{i < j} a_i a_j \left( \varphi_{x_i} x_j  -\varphi_{x_j} x_i\right)^2,
\end{align*}
and similarly
\[
\int_{\omega} |\nabla_\theta \varphi|^2 = \int_{\omega} \sum_{i<j}  \left( \varphi_{x_i} x_j  -\varphi_{x_j} x_i\right)^2.
\]
\end{remark}

\subsection{Monotonicity formula in dimension $N \ge 3$}

For $N \ge 3$, we define
\begin{equation}\label{def IA}
I_A (u,x_0,r) = \int_{B_r(x_0)} \langle A\nabla u, \nabla u \rangle \Gamma_A(x-x_0)\,dx.
\end{equation}

\begin{lemma}\label{lem: ACF pre}
Let $N \ge 3$, and let $u \in H^1_{\loc}(B_R)$ be nonnegative, and such that $\div(A \nabla u) \ge 0$ in $B_R$. Then, for almost every $r >0$ such that $B_r(x_0) \subset \subset B_R$, we have
\[
I_A(u,x_0,r) \le 
\frac{a_N^\frac{N}{2}   r}{2 \gamma( \lambda(A,u_{x_0,r})) } \int_{S_r(x_0)} \langle A\nabla u, \nabla u \rangle \Gamma_A(x-x_0) \, d\sigma.
\]
\end{lemma}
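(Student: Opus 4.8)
The plan is to transplant to the anisotropic weight $\Gamma_A$ the differential‑inequality argument underlying the classical ACF formula. I would first reduce to $x_0=0$, and, since for $N\ge 3$ the weight $\Gamma_A$ is singular at the origin, replace it by the regularised weight $\Phi_{A,\delta}$ introduced above: it is $C^1$, positive, satisfies $\div(A\nabla\Phi_{A,\delta})\le 0$, agrees with $\Gamma_A$ outside the ellipsoid $\cE_\delta$, and increases to $\Gamma_A$ as $\delta\downarrow 0$. Fixing a radius $r$ with $B_r\subset\subset B_R$ and $u|_{S_r}\in H^1(S_r)$ --- which holds for a.e.\ $r$ --- I would start from the pointwise identity
\[
\langle A\nabla u,\nabla u\rangle\Phi_{A,\delta} = \div\!\bigl(\Phi_{A,\delta}\,u\,A\nabla u\bigr) - u\,\Phi_{A,\delta}\,\div(A\nabla u) - \div\!\Bigl(\tfrac12 u^2 A\nabla\Phi_{A,\delta}\Bigr) + \tfrac12 u^2\,\div(A\nabla\Phi_{A,\delta}),
\]
integrate over $B_r$, and discard the two interior terms $-\int_{B_r}u\,\Phi_{A,\delta}\,\div(A\nabla u)$ and $\tfrac12\int_{B_r}u^2\,\div(A\nabla\Phi_{A,\delta})$, which are nonpositive by the sign hypotheses on $u$, on $\div(A\nabla u)$ and on $\div(A\nabla\Phi_{A,\delta})$. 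Letting $\delta\downarrow 0$ (monotone convergence on the left, and $\Phi_{A,\delta}\equiv\Gamma_A$ on $S_r$ once $\delta$ is small), this gives that $I_A(u,0,r)$ is finite and
\[
I_A(u,0,r)\ \le\ \int_{S_r} u\,\langle A\nabla u,\nu\rangle\,\Gamma_A\,d\sigma\ -\ \frac12\int_{S_r} u^2\,\langle A\nabla\Gamma_A,\nu\rangle\,d\sigma.
\]

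Next I would pass to the unit sphere. Writing $x=r\xi$ with $\xi\in\S^{N-1}$, putting $u_r:=u_{0,r}$, $N_r(\xi):=\langle A\nabla u(r\xi),\xi\rangle$, $\rho(\xi):=\langle A^{-1}\xi,\xi\rangle\in[a_N^{-1},1]$, and using $A\nabla\Gamma_A(x)=(2-N)\langle A^{-1}x,x\rangle^{-N/2}x$, a routine scaling computation rewrites the right‑hand side above as $r\!\int_{\S^{N-1}}\rho^{(2-N)/2}u_rN_r\,d\sigma+\tfrac{N-2}{2}\!\int_{\S^{N-1}}\rho^{-N/2}u_r^2\,d\sigma$. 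On the other hand, the splitting \eqref{rel tan} of $\langle A\nabla u,\nabla u\rangle$ into $A$‑tangential and $A$‑normal parts yields
\[
\int_{S_r}\langle A\nabla u,\nabla u\rangle\,\Gamma_A\,d\sigma=\frac1r\int_{\S^{N-1}}\langle A\nabla_\theta^A u_r,\nabla_\theta^A u_r\rangle\,\rho^{(2-N)/2}\,d\sigma+r\int_{\S^{N-1}}\frac{N_r^2}{\mu}\,\rho^{(2-N)/2}\,d\sigma .
\]
Using $\rho^{(2-N)/2}\ge 1$, $1\le\mu\le a_N$, and the fact that $u_r$ is an admissible competitor in the definition of $\lambda(A,u_r)$, I would bound this last quantity from below by $\tfrac{\lambda}{r}a+\tfrac{r}{a_N}b$, where $a:=\int_{\S^{N-1}}u_r^2\,d\sigma$, $b:=\int_{\S^{N-1}}N_r^2\,d\sigma$, $\lambda:=\lambda(A,u_r)$; and, using instead $\rho^{(2-N)/2}\le a_N^{(N-2)/2}$, $\rho^{-N/2}\le a_N^{N/2}$ together with Cauchy--Schwarz on the cross term, bound $I_A(u,0,r)\le r\,a_N^{(N-2)/2}\sqrt{ab}+\tfrac{N-2}{2}a_N^{N/2}a$.

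The proof would then conclude by the elementary inequality
\[
r\,a_N^{(N-2)/2}\sqrt{ab}+\frac{N-2}{2}a_N^{N/2}a\ \le\ \frac{a_N^{N/2}r}{2\gamma}\Bigl(\frac{\lambda}{r}a+\frac{r}{a_N}b\Bigr),
\]
which, after using the defining relation $\gamma^2+(N-2)\gamma=\lambda$ for $\gamma=\gamma(\lambda)$ --- so that $\tfrac{\lambda}{2\gamma}-\tfrac{N-2}{2}=\tfrac{\gamma}{2}$ --- reduces to $a_N^{(N-2)/2}r\sqrt{ab}\le a_N^{(N-1)/2}r\sqrt{ab}$ via the arithmetic--geometric mean inequality, hence holds since $a_N\ge1$ (the case $\lambda=0$, i.e.\ $\gamma=0$, being trivial, as the asserted bound is then $+\infty$). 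Combining this with the two estimates of the previous paragraph yields the claim.

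The step I expect to be the main obstacle is precisely this anisotropic bookkeeping: because the integrations by parts are carried out on Euclidean balls while $\Gamma_A$ is adapted to the ellipsoids $\cE_r$, the restriction of $\Gamma_A$ to $S_r$ is not constant and $\langle A\nabla u,\nu\rangle$ is not a multiple of $\pa_\nu u$, so the clean scaling available for $A=\textrm{Id}$ is lost, and one must absorb the extra weights $\rho^{(2-N)/2}$, $\rho^{-N/2}$ and $\mu$ by their extreme values $1$ and $a_N$ --- which is exactly the origin of the constant $a_N^{N/2}$ (presumably far from optimal). A second, more routine, point is to justify the integrations by parts --- in particular the sign of $\int_{B_r}u\,\Phi_{A,\delta}\,\div(A\nabla u)$ --- under the mere assumption $u\in H^1_{\loc}(B_R)$, without any continuity of $u$, in the spirit of \cite{Vel}; this is what confines the conclusion to almost every $r$.
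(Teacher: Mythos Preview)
Your proposal is correct and follows essentially the same route as the paper's proof. Both arguments (i) regularise the singular weight via $\Phi_{A,\delta}$ and integrate by parts to reach the boundary inequality $I_A\le\int_{S_r}u\langle A\nabla u,\nu\rangle\Gamma_A-\tfrac12\int_{S_r}u^2\langle A\nabla\Gamma_A,\nu\rangle$, (ii) absorb the anisotropic weights $\rho^{(2-N)/2}$, $\rho^{-N/2}$, $\mu$ by their extreme values $1$ and $a_N$, (iii) invoke the variational definition of $\lambda(A,u_r)$ together with the splitting \eqref{rel tan}, and (iv) use the quadratic relation $\gamma^2+(N-2)\gamma=\lambda$. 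The only differences are cosmetic: the paper applies a weighted Young inequality $|u||\langle A\nabla u,\nu\rangle|\le\tfrac{\alpha}{2r}u^2\mu+\tfrac{r}{2\alpha}\frac{\langle A\nabla u,\nu\rangle^2}{\mu}$ with a parameter $\alpha$ that is then optimised to $\alpha=\gamma(\lambda)$, whereas you use Cauchy--Schwarz first and postpone the same AM--GM step to the final algebraic inequality; and the paper makes the $H^1_{\loc}$ justification explicit by mollifying $u$ at the outset, while you only flag this as a routine point. Neither difference is substantive.
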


\begin{proof}
In order to simplify the notation, we consider $x_0=0$, and we often omit the dependence on $x_0$ and $A$ on most of the quantities. 

Let $u_\eps$ be a mollification of $u$, which still satisfies $\div(A\nabla u_\eps) \ge 0$ and $u_\eps \ge 0$. By using the coarea formula, it is not difficult to check that:
\begin{itemize}
\item[($i$)] for almost every $r \in (0,R)$ the restrictions of $u$ and of $\pa_{x_i} u$ ($i=1,\dots,k$) on $S_r$ are well defined, are in $L^2(S_r)$, and $u|_{S_r} \in H^1(S_r)$;
\item[($ii$)] for almost every $r \in (0,R)$ the restrictions of $u_\eps$ and of $\pa_{x_i} u_\eps$ on $S_r$ strongly converge to those of $u$ and of $\pa_{x_i} u$ in $L^2(S_r)$, as $\eps \to 0^+$.
\end{itemize}

We consider $r \in (0,R)$ such that both ($i$) and ($ii$) hold, and prove the lemma for these $r$. Let $\delta>0$ be such that $\{|A^{-1/2} x| \le \delta\} \subset \subset B_r$. In this way, we have that $\Phi_{A,\delta} = \Gamma_A$ in a neighborhood of $S_r$. We also recall that $\mu = \langle A \nu, \nu \rangle$ on $S_r$. By testing the equation for $u_\eps$ with $u_\eps \Phi_{A,\delta}$ in $B_r$, and recalling that $\div(A \nabla \Phi_{A,\delta}) \le 0$, we obtain
\begin{align*}
\int_{B_r} \langle A\nabla u_\eps, \nabla u_\eps \rangle \Phi_\delta & \le \int_{S_r} \Gamma u_\eps \langle A \nabla u_\eps, \nu \rangle  - \frac12 \int_{B_r}  \langle \nabla (u_\eps^2), A \nabla \Phi_\delta \rangle  \\
& \le \int_{S_r} \Gamma u_\eps \langle A \nabla u_\eps, \nu \rangle - \frac12 \int_{S_r} u_\eps^2 \langle A \nabla \Gamma,\nu\rangle. 
\end{align*}
By taking the limit as $\eps \to 0^+$ at first, and afterwards as $\delta \to 0^+$ (thanks to ($i$) and ($ii$)), we deduce that
\begin{equation}\label{l1-1}
I(u,r) \le \int_{S_r} u \Gamma \langle A \nabla u, \nu \rangle - \int_{S_r} \frac{u^2}2 \langle A \nabla \Gamma,\nu\rangle. 
\end{equation}
Now, on $S_r$ we have
\[
-\langle A \nabla \Gamma,\nu \rangle = (N-2) \left( \sum_i \frac{x_i^2}{a_i} \right)^{-\frac{N}2} |x| \le (N-2) \frac{a_N^{\frac{N}2}}{r^{N-1}}.
\]
Thus, recalling also that $\mu = |x|^{-2}\sum_i a_i x_i^2 \ge a_1 = 1$, we have
\beq\label{l1-2}
-\int_{S_r} \frac{u^2}2 \langle A \nabla \Gamma,\nu\rangle \le \frac{(N-2) a_N^{\frac{N}2}}{ 2 r^{N-1}} \int_{S_r} u^2 \mu. 
\eeq
Similarly, 
\beq\label{l1-3}
\begin{split}
\int_{S_r} \Gamma u \langle A \nabla u,\nu \rangle & \le \frac{a_N^\frac{N-2}{2}}{r^{N-2}} \int_{S_r} |u| |\langle A \nabla u,\nu \rangle|  \\ & \le \frac{a_N^\frac{N}{2}}{r^{N-2}} \left[\frac{\alpha}{2 r} \int_{S_r} u^2 \mu + \frac{r}{2\alpha} \int_{S_r} \frac{\langle A \nabla u,\nu \rangle^2}{\mu}\right],
\end{split}
\eeq
with $\alpha >0$ to be conveniently chosen later (in the last step, we used that $a_N \ge a_1=1$). By combining \eqref{l1-1}-\eqref{l1-3}, we obtain 
\[
\begin{split}
I(u,r) &\le \frac{a_N^\frac{N}{2}}{2 r^{N-3}} \left[ \frac{N-2+\alpha}{r^2} \int_{S_r} u^2 \mu + \frac{1}{\alpha} \int_{S_r} \frac{\langle A \nabla u,\nu \rangle^2}{\mu}\right] \\
& \le \frac{a_N^\frac{N}{2}}{2 r^{N-3}} \left[ \frac{N-2+\alpha}{\lambda(A,u_r)} \int_{S_r} \langle A \nabla_\theta^A u, \nabla_\theta^A u\rangle + \frac{1}{\alpha} \int_{S_r} \frac{\langle A \nabla u,\nu \rangle^2}{\mu}\right], 
\end{split}
\]
where we used the definition of $\lambda(A,u_r)$. We choose now $\alpha>0$ in order to perfectly balance the coefficients: that is, we impose
\[
\frac{N-2+\alpha}{\lambda(A,u_r)} = \frac{1}{\alpha} \quad \implies \quad \alpha = \gamma( \lambda(A,u_r) ).
\]
In this way we deduce that
\[
\begin{split}
I(u,r) & \le \frac{a_N^\frac{N}{2}}{2 \gamma( \lambda(A,u_r) ) r^{N-3}} \left[  \int_{S_r} \langle A \nabla_\theta^A u, \nabla_\theta^A u\rangle + \int_{S_r} \frac{\langle A \nabla u,\nu \rangle^2}{\mu}\right] \\
&= \frac{a_N^\frac{N}{2} r^{3-N}}{2 \gamma( \lambda(A,u_r))}  \int_{S_r} \langle A \nabla u, \nabla u \rangle \le \frac{a_N^\frac{N}{2} r}{2 \gamma( \lambda(A,u_r) )}  \int_{S_r} \langle A \nabla u, \nabla u \rangle \Gamma
\end{split}
\]
where we used \eqref{rel tan} and the fact that $\Gamma(x) \ge a_1^{(N-2)/2} |x|^{2-N} = r^{2-N}$ on $S_r$. 
\end{proof}

\begin{remark}\label{rmk acf pre}
Notice that the lemma is still valid for $A=\textrm{Id}$. Of course, in that case we have that $a_1=a_N=1$, and $\Gamma_A(x) = |x|^{2-N}$.
\end{remark}

Motivated by Lemma \ref{lem: ACF pre}, we study now the following asymmetric optimal partition problem:
\begin{equation}\label{opp}
\nu_{A,N}:=\inf\left\{ a_N^{-\frac{N}2}\gamma(\lambda(A,u)) + \gamma(\lambda(\textrm{Id}, v))\left| \begin{array}{l} u,v \in H^1(\S^{N-1})  \\  
\int_{\S^{N-1}} u^2 v^2 = 0.
\end{array}\right.\right\},
\end{equation}
with the convention that $\lambda(A,u) = +\infty$ if $u \equiv 0$ on $\S^{N-1}$ (this gives some continuity to $\lambda(A,\,\cdot)$ since, if $\mathcal{H}^{N-1}(\{u_n>0\}) \to 0$, then 
$\lambda(A,u_n) \to +\infty$ by the Sobolev inequality). The infimum is greater than or equal to $0$, since we are minimizing the sum of two non-negative quantities. We also recall that, in the symmetric case $A=\textrm{Id}$, it is known that $\nu_{\textrm{Id},N}=2$ (Friedland-Hayman inequality\footnote{The Friedland-Hayman inequality is usually syayed in a slightly different form, involving partitions of the sphere in disjoint open sets. However, in light of the condition $\int_{\S^{N-1}} u^2 v^2 = 0$ in the definition of $\nu_{A,N}$, it is not difficult to check that the inequality is equivalent to the fact that $\nu_{\textrm{Id},N}=2$.}), and the optimal value is reached if and only if $u$ and $v$ are $1$-homogeneous functions supported on disjoint half-spherical caps (see \cite[Chapter 2]{PSU12} and references therein for more details). Differently to the symmetric case, we are not able to characterize $\nu_{A,N}$ or to classify the optimizers. We are only able to exclude that $\nu_{A,N} =0$, and to bound it from above.

\begin{lemma}\label{lem: on ov}
Let $A \neq \textrm{Id}$ be a matrix as in \eqref{def A}. Then $0 <\nu_{A,N} <2$. 
\end{lemma}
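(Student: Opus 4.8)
The proof naturally splits into the two inequalities, and the upper bound $\nu_{A,N}<2$ is where the genuine content lies. For the lower bound $\nu_{A,N}>0$, I would argue by contradiction: suppose there are sequences $u_n,v_n \in H^1(\S^{N-1})$ with $\int u_n^2 v_n^2 = 0$ and $a_N^{-N/2}\gamma(\lambda(A,u_n)) + \gamma(\lambda(\mathrm{Id},v_n)) \to 0$. Since $\gamma$ is increasing and $\gamma(0)=0$, this forces $\lambda(A,u_n)\to 0$ and $\lambda(\mathrm{Id},v_n)\to 0$. But by Remark 2.3 the form $\int_{\S^{N-1}}\langle A\nabla_\theta^A\varphi,\nabla_\theta^A\varphi\rangle$ is comparable to $\int_{\S^{N-1}}|\nabla_\theta\varphi|^2$, and $\mu$ is comparable to $1$, so $\lambda(A,u_n)$ is comparable to the usual first Dirichlet eigenvalue of $-\Delta_{\S^{N-1}}$ restricted to (the support-admissibility class of) $u_n$; hence $\lambda(A,u_n)\to 0$ forces $\mathcal H^{N-1}(\{u_n\ne0\})\to \mathcal H^{N-1}(\S^{N-1})$ by the Faber–Krahn / Sobolev inequality on the sphere, and similarly for $v_n$. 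This contradicts $\int u_n^2 v_n^2 = 0$, which would require the essential supports of $u_n$ and $v_n$ to be (up to null sets) disjoint subsets of $\S^{N-1}$ — the two ``almost full'' sets cannot be disjoint once $N$ is fixed. This yields $\nu_{A,N}>0$.

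For the upper bound, the strategy is to exhibit a good competitor pair. In the isotropic case the optimizer of $\nu_{\mathrm{Id},N}=2$ is a pair of $1$-homogeneous functions supported on complementary open half-spheres (half-spherical caps determined by a hyperplane), each contributing $\gamma(\lambda(\mathrm{Id},\cdot))=1$. My plan is to take $v$ to be exactly such a half-sphere first eigenfunction (an affine function restricted to $\S^{N-1}$, supported on $\{x_1 > 0\}$, say, or more conveniently a half-sphere chosen so that the comparison with $A$ is favourable), and take $u$ supported on the complementary cap $\{x_1<0\}$. Then $\gamma(\lambda(\mathrm{Id},v))=1$ exactly. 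The point is now to show that $a_N^{-N/2}\gamma(\lambda(A,u)) < 1$ strictly, so that the sum is $<2$. Since $u$ is admissible (supported on a half-sphere) one may test the Rayleigh quotient defining $\lambda(A,u)$ with the ``wrong'' function — the Euclidean half-sphere eigenfunction $\varphi$, i.e. (a multiple of) $x_j|_{\S^{N-1}}$ for the cap $\{x_j<0\}$ — and use Remark 2.3 together with the explicit bounds $\mu\ge 1$ to get an explicit upper bound on $\lambda(A,u)$ in terms of the $a_i$, and hence on $\gamma(\lambda(A,u))$. The freedom lies in choosing the hyperplane direction defining the caps (say perpendicular to the $e_N$-axis, the axis of the largest eigenvalue, since that is where $A$ differs most from $\mathrm{Id}$) to make the resulting bound beat $a_N^{N/2}$.

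The main obstacle — and the step requiring care — is making the strict inequality $a_N^{-N/2}\gamma(\lambda(A,u)) < 1$ quantitative, because a naive estimate ($\langle A\nabla_\theta^A\varphi,\nabla_\theta^A\varphi\rangle \le a_N a_{N-1}|\nabla_\theta\varphi|^2$ and $\mu\le a_N$) gives something like $\lambda(A,u) \le a_N a_{N-1}\lambda(\mathrm{Id},\varphi) \cdot (\text{stuff})$, which after applying $\gamma$ and multiplying by $a_N^{-N/2}$ need not be $<1$ for all $A$. The resolution is to pick the cap orthogonal to $e_N$ so that the competitor $\varphi = (x_N)^-|_{\S^{N-1}}$ has its tangential gradient mostly in directions where the relevant eigenvalues are not $a_N$ but smaller ones, and to compute (or carefully estimate from above) the exact Rayleigh quotient $\int_{\{x_N<0\}}\langle A\nabla_\theta^A\varphi,\nabla_\theta^A\varphi\rangle / \int_{\{x_N<0\}}\varphi^2\mu$ using the closed-form expressions in Remark 2.3 for $\varphi = x_N|_{\S^{N-1}}$ (where $\varphi_{x_i}=\delta_{iN} - x_ix_N$), exploiting $a_1=1$ in the denominator's $\mu = \sum a_i x_i^2$ and the symmetry of the cap. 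I expect this to produce an explicit constant strictly below $a_N^{N/2}$ — using crucially that $A\ne\mathrm{Id}$ enters only to ensure we are not in the equality case, while the strictness actually comes from the fact that the half-sphere eigenfunction is not the true optimizer of the anisotropic partition problem. A clean alternative, if the direct computation is unwieldy, is to invoke the rigidity in the Friedland–Hayman inequality: equality $\nu_{A,N}=2$ would force both components to be $1$-homogeneous half-space functions with $\gamma(\lambda(A,u))=a_N^{N/2}$ and $\gamma(\lambda(\mathrm{Id},v))=1$ and complementary flat supports, and one then checks that no half-space function achieves $\lambda(A,u)=$ the required value when $A\ne\mathrm{Id}$, since $\lambda(A,\cdot)$ on a fixed half-sphere is minimized by a genuine (non-affine-looking) eigenfunction of the anisotropic Laplace–Beltrami-type operator, strictly below the affine test value.
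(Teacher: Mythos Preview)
Your lower bound argument is essentially correct and close in spirit to the paper's: the paper normalizes a minimizing sequence, passes to weak $H^1(\S^{N-1})$ limits, and obtains two nonzero constants with disjoint supports---a contradiction. Your Faber--Krahn variant reaches the same contradiction and is fine.

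The upper bound is where your plan has a genuine gap. The overall strategy---take half-sphere competitors $(u,v)$ and show the anisotropic term drops strictly below $1$---is exactly the paper's, but your choice of direction is backwards. You propose the cap orthogonal to $e_N$, reasoning that the tangential gradient of $x_N$ lies ``mostly in directions where the relevant eigenvalues are not $a_N$''; in fact $\nabla x_N = e_N$, so the anisotropic form picks up the \emph{largest} eigenvalue $a_N$, and the Rayleigh quotient for $\varphi = x_N^+$ is in general \emph{larger} than $N-1$ (one can check this already for $A=\mathrm{diag}(1,\dots,1,a)$ near $a=1$). The paper instead takes the cap orthogonal to $e_1$, exploiting $a_1=1$: testing $\lambda(A,x_1^+)$ with $\varphi = x_1^+$ gives the explicit quotient
\[
\frac{\int_{\omega_+}\frac{\sum_{j>1}a_j x_j^2}{\sum_i a_i x_i^2}\,d\sigma}{\int_{\omega_+} x_1^2 \sum_i a_i x_i^2\,d\sigma}\,,
\]
and the paper proves this is strictly less than $N-1$ whenever $A\neq\mathrm{Id}$ by a clean monotonicity argument: writing $\Phi(a_2,\dots,a_N)$ for the numerator minus $(N-1)$ times the denominator, one has $\Phi(1,\dots,1)=0$ and $\partial\Phi/\partial a_k<0$ for each $k$. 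Then
\[
\nu_{A,N}\le a_N^{-N/2}\gamma(\lambda(A,x_1^+)) + \gamma(\lambda(\mathrm{Id},x_1^-)) < \gamma(N-1)+\gamma(N-1)=2.
\]

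Your fallback---invoking Friedland--Hayman rigidity---does not close the gap: that rigidity characterizes the optimizers of the \emph{isotropic} functional $\gamma(\lambda(\mathrm{Id},u))+\gamma(\lambda(\mathrm{Id},v))$ and says nothing about when $a_N^{-N/2}\gamma(\lambda(A,u))+\gamma(\lambda(\mathrm{Id},v))$ equals $2$. There is no established rigidity for the anisotropic problem, and no a priori reason equality would force half-space supports or the specific value $\gamma(\lambda(A,u))=a_N^{N/2}$; the argument is circular, since ruling this out is precisely the computation you still need to do.
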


\begin{proof}
We prove at first that $\nu_{A,N}>0$. Suppose by contradiction that $\nu_{A,N} = 0$, and let $(u_{n}, v_{n})$ be a minimizing sequence. By definition of $\gamma$, this implies that there exist $(u_n,v_n) \in H^1(\S^{N-1}) \times H^1(\S^{N-1})$ such that
\[
\begin{split}
\int_{\S^{N-1}} \langle A \nabla_\theta^A u_n, \nabla_\theta^A u_n \rangle \to 0, \qquad \int_{\S^{N-1}}|\nabla_\theta v_n|^2 \to 0, \\ \int_{\S^{N-1}} u_n^2 \mu \equiv 1, \qquad \int_{\S^{N-1}} v_n^2 \equiv 1,  \qquad \int_{\S^{N-1}} u_n^2 v_n^2 \equiv 0.
\end{split}
\]
Recalling Remark \ref{rmk: tang grad}, we deduce that up to a subsequence $u_n \weak u$, $v_n \weak v$ weakly in $H^1(\S^{N-1})$, with strong convergence in $L^2(\S^{N-1})$, and almost everywhere in $\S^{N-1}$. Therefore
\[
\begin{split}
\int_{\S^{N-1}} \langle A \nabla_\theta^A u, \nabla_\theta^A u \rangle = 0, \qquad \int_{\S^{N-1}}|\nabla_\theta v|^2 = 0, \\
\int_{S^{N-1}} u^2 \mu \equiv 1, \qquad \int_{\S^{N-1}} v^2 \equiv 1,  \qquad \int_{\S^{N-1}} u^2 v^2 \equiv 0.
\end{split}
\]
But then it is necessary that $u$ and $v$ are positive constants on $\S^{N-1}$, with disjoint positivity sets, which is clearly a contradiction.

Now we show that $\nu_{A,N} <2$ for $A \neq \textrm{Id}$ as in \eqref{def A}. To this purpose we choose $u=x_1^+$ as test function for $\lambda(A,x_1^+)$:
\begin{align*}
\frac{\int_{\omega_+} \left(  \langle A \nabla x_1, \nabla x_1\rangle - \frac{\langle A \nabla x_1, \nu \rangle^2}{\langle A \nu, \nu \rangle} \right) \,d\sigma   }{\int_{\omega_+} x_1^2 \langle A \nu, \nu \rangle \,d\sigma} 
& =  \frac{1}{\int_{\omega_+} x_1^2 (x_1^2 + \sum_{i >1} a_i x_i^2) \,d\sigma} \int_{\omega_+} \frac{\sum_{i >1} a_i x_i^2}{x_1^2 + \sum_{i >1} a_i x_i^2} \, d\sigma,
\end{align*}
where $\omega_+$ denotes the half-spherical cap $\{x_1>0\} \cap \S^{N-1}$
(recall that $a_1=1$, and $\nu = x$ on $\S^{N-1}$). This proves that
\begin{equation}\label{up bo la 1}
\lambda(A,x_1^+) \le \frac{\int_{\omega_+} \frac{\sum_{i >1} a_i x_i^2}{x_1^2 + \sum_{i >1} a_i x_i^2} \, d\sigma}{\int_{\omega_+} x_1^2 (x_1^2 + \sum_{i >1} a_i x_i^2) \,d\sigma} =: \frac{\varphi(a_2,\dots,a_N)}{\psi(a_2,\dots,a_N)},
\end{equation}
and we aim to show that 
\begin{equation}\label{up bo la}
\lambda(A,x_1^+) < N-1 = \lambda(\textrm{Id}, x_1^+)
\end{equation}
for every matrix $A \neq \textrm{Id}$ as in \eqref{def A}. This amounts to show that 
the right hand side in \eqref{up bo la 1} is strictly smaller than $N-1$ if $A \neq \textrm{Id}$, which in turn is equivalent to prove that
\[
\Phi(a_2,\dots,a_N) := \varphi(a_2,\dots,a_N) - (N-1) \psi(a_2,\dots,a_N) < 0
\]
for every $(a_2,\dots,a_N) \in ([1,+\infty))^{N-1}$, with at least one component $a_j>1$.
Firstly, it is immediate to check that $\Phi(1,\dots,1) =0$. Moreover
\[
\frac{\pa \Phi(a_2,\dots,a_N)}{\pa a_k} = \int_{\omega_+} \frac{x_1^2 x_k^2}{(x_1^2 + \sum_{i>1} a_i x_i^2)^2} \, d\sigma - (N-1) \int_{\omega_+} x_1^2 x_k^2 \, d\sigma < (2-N) \int_{\omega} x_1^2 x_k^2 \,d\sigma \le 0,
\]
where the strict inequality follows from the fact that $a_j>1$ for some $j$. This means that $\Phi$ is monotone decreasing with respect to all its variables in $([1,+\infty))^{N-1}$, and hence $\Phi(a_2,\dots,a_N) < \Phi(1,\dots,1)=0$ for every $(a_2,\dots,a_N) \in ([1,+\infty))^{N-1}$, with at least one component $a_j>1$. Claim \eqref{up bo la} follows.

At this point we proceed with the estimate for $\nu_{A,N}$, by taking the admissible competitor $(u,v) =(x_1^+, x_1^-)$. Since $\lambda(\textrm{Id},x_1^-) = N-1$, by \eqref{up bo la}
\[
\nu_{A,N} \le a_N^{-\frac{N}2} \gamma(\lambda(A,x_1^+)) + \gamma(\lambda(\textrm{Id}, x_1^-)) < \gamma(N-1) + \gamma(N-1) = 2,
\]
which is the desired upper bound.
\end{proof}

We are now in position to proceed with the:

\begin{proof}[Proof of Theorem \ref{thm: ACF}]
We do not stress the depends of the functionals on $u,v$ and $x_0$, to simplify the notation. It is standard to check that $I_A$ and $I_{\textrm{Id}}$ are absolutely continuous functions for $0<r<\rho =\dist(x_0,\pa B_R)$, and hence a.e. $r \in (0,\rho)$ is a Lebesgue point of $J$. Moreover, for a.e. $r \in (0,\rho)$ the restrictions $u|_{S_r(x_0)}, \pa_{x_i} u |_{S_r(x_0)}$ and $v|_{S_r(x_0)}, \pa_{x_i} v |_{S_r(x_0)}$ are functions in $L^2(S_r(x_0))$. We compute the derivative of $J$ with respect to the radius, denoted by $J'$, in any point $r$ for which both the above properties are satisfied, and verify that $J'(r) \ge 0$. 

We suppose that both $I_A(r)>0$ and $I_{\textrm{Id}}(r)>0$, otherwise the fact that $J'(r) \ge 0$ follows simply from the non-negativity of $J$. 

Let $u_{x_0,r}(\cdot)=u(x_0 + r \, \cdot)$, and $v_{x_0,r}(\cdot)=v(x_0 + r \, \cdot)$. By assumption $\int_{\S^{N-1}} u_{x_0,r}^2 v_{x_0,r}^2 = 0$ and, by Lemma \ref{lem: ACF pre} (see also Remark \ref{rmk acf pre}), we have that 
\begin{align*}
\frac{J'(r)}{J(r)} &=  \frac{I_A'(r)}{I_A(r)} + \frac{I_{\textrm{Id}}'(r)}{I_{\textrm{Id}}(r)} -\frac{2\nu_{A,N}}{r} \\
& =  \frac{ \int_{S_r(x_0)} \langle A\nabla u, \nabla u\rangle \Gamma_A}{\int_{B_r(x_0)} \langle A\nabla u, \nabla u\rangle \Gamma_A} +\frac{ \int_{S_r(x_0)} |\nabla v|^2 |x|^{2-N}}{\int_{B_r(x_0)} |\nabla v|^2 |x|^{2-N}}  -\frac{2\nu_{A,N}}{r}\\
& \ge \frac2r \left( a_N^{-\frac{N}2} \gamma(\lambda(A,u_{x_0,r})) +\gamma(\lambda(\textrm{Id},v_{x_0,r}))-\nu_{A,N}\right) \ge 0, 
\end{align*}
where the last inequality follows from the very same definition of $\nu_{A,N}$.
\end{proof}

\subsection{Monotonicity formula in dimension $N=2$}

The $2$-dimensional case is easier than the higher dimensional one, since it is not necessary to work with the fundamental solution $\Gamma_A$. As a consequence, the optimal partition problem defining the exponent in the monotonicity formula is slightly different.

In dimension $N=2$ we modify the definition of $I_A$ as
\begin{equation}\label{def IA 2}
I_A(u,x_0,r) = \int_{B_r(x_0)}  \langle A \nabla u, \nabla u \rangle\,dx.
\end{equation}
As a consequence, Lemma \ref{lem: ACF pre} is simplified as follows.

\begin{lemma}\label{lem: ACF pre 2}
Let $N = 2$, and let $u \in H^1_{\loc}(B_R)$ be nonnegative, and such that $\div(A \nabla u) \ge 0$ in $B_R$. Then, for almost every $r>0$ such that $B_r(x_0) \subset \subset B_R$, we have
\[
I_A(u,x_0,r) \le  \frac{r}{2 \sqrt{\lambda(A,u_{x_0,r})}} \int_{S_r(x_0)} \langle A\nabla u, \nabla u \rangle\, d\sigma.
\]
\end{lemma}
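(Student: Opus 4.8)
The plan is to mimic the proof of Lemma~\ref{lem: ACF pre}, but with the considerable simplification that in dimension $N=2$ the function $\Gamma_A \equiv 1$, so no fundamental solution, no regularized weight $\Phi_{A,\delta}$, and no boundary term coming from $\langle A\nabla\Gamma_A,\nu\rangle$ enter the computation. Concretely, I would first mollify $u$ to $u_\eps$, keeping $u_\eps\ge 0$ and $\div(A\nabla u_\eps)\ge 0$, and select $r$ for which the traces of $u$ and $\pa_{x_i}u$ on $S_r(x_0)$ are in $L^2(S_r(x_0))$ with $u_{x_0,r}\in H^1(\S^1)$, and along which $u_\eps,\pa_{x_i}u_\eps$ converge strongly in $L^2(S_r(x_0))$, exactly as in points $(i)$--$(ii)$ of that proof. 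Testing the inequality $\div(A\nabla u_\eps)\ge0$ with $u_\eps$ on $B_r(x_0)$ (take $x_0=0$) gives
\[
\int_{B_r}\langle A\nabla u_\eps,\nabla u_\eps\rangle \le \int_{S_r} u_\eps\,\langle A\nabla u_\eps,\nu\rangle,
\]
and letting $\eps\to0^+$ yields $I_A(u,r)\le \int_{S_r} u\,\langle A\nabla u,\nu\rangle$, the $N=2$ analogue of \eqref{l1-1} without the $\Gamma_A$-term.

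Next I would estimate the boundary integral by Cauchy--Schwarz with a free parameter, in the $A$-adapted way used for \eqref{l1-3}: since $\mu=\langle A\nu,\nu\rangle\ge a_1=1$ on $S_r$,
\[
\int_{S_r} u\,\langle A\nabla u,\nu\rangle \le \int_{S_r} |u|\,|\langle A\nabla u,\nu\rangle| \le \frac{\alpha}{2r}\int_{S_r} u^2\mu + \frac{r}{2\alpha}\int_{S_r}\frac{\langle A\nabla u,\nu\rangle^2}{\mu},
\]
for $\alpha>0$ to be chosen. Now invoke the definition of $\lambda(A,u_r)$: writing $u_r(\xi)=u(r\xi)$ and scaling, $\int_{S_r} u^2\mu \le \lambda(A,u_r)^{-1}\,r^{2}\int_{S_r}\langle A\nabla_\theta^A u,\nabla_\theta^A u\rangle\cdot r^{-1}$ — more precisely, one passes to $\S^1$ by the change of variables $x=r\xi$, uses that the tangential gradient on $S_r$ equals $r^{-1}$ times that on $\S^1$ and $d\sigma_x=r\,d\sigma_\xi$, obtaining $\int_{S_r}u^2\mu \le \dfrac{r^2}{\lambda(A,u_r)}\displaystyle\int_{S_r}\langle A\nabla_\theta^A u,\nabla_\theta^A u\rangle$. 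Substituting gives
\[
I_A(u,r)\le \frac{1}{2r}\left(\frac{\alpha\, r^{2}}{\lambda(A,u_r)}\right)\frac{1}{r^{2}}\cdot r\int_{S_r}\langle A\nabla_\theta^A u,\nabla_\theta^A u\rangle + \frac{r}{2\alpha}\int_{S_r}\frac{\langle A\nabla u,\nu\rangle^2}{\mu}
= \frac{\alpha\, r}{2\lambda(A,u_r)}\int_{S_r}\langle A\nabla_\theta^A u,\nabla_\theta^A u\rangle+\frac{r}{2\alpha}\int_{S_r}\frac{\langle A\nabla u,\nu\rangle^2}{\mu}.
\]
Here the factor $r$ works out cleanly because in $N=2$ the homogeneity of $\Gamma_A\equiv1$ is trivial, which is exactly why no power $r^{3-N}$ or constant $a_N^{N/2}$ survives.

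Finally I would balance the two coefficients by choosing $\alpha$ so that $\dfrac{\alpha}{\lambda(A,u_r)}=\dfrac{1}{\alpha}$, i.e.\ $\alpha=\sqrt{\lambda(A,u_r)}$ — this is precisely $\gamma(\lambda(A,u_r))$ when $N=2$, since $\gamma(t)=\sqrt{(N-2)^2/4+t}-(N-2)/2=\sqrt{t}$ for $N=2$. With this choice the bracket collapses, and using the decomposition \eqref{rel tan},
\[
I_A(u,r)\le \frac{r}{2\sqrt{\lambda(A,u_r)}}\left(\int_{S_r}\langle A\nabla_\theta^A u,\nabla_\theta^A u\rangle + \int_{S_r}\frac{\langle A\nabla u,\nu\rangle^2}{\mu}\right) = \frac{r}{2\sqrt{\lambda(A,u_r)}}\int_{S_r}\langle A\nabla u,\nabla u\rangle\,d\sigma,
\]
which is the claimed inequality. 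The main thing to be careful about — the only genuine obstacle — is the bookkeeping of the radial scaling in the Poincaré-type step: one must check that the constant in the inequality $\int_{S_r}u^2\mu\le\lambda(A,u_r)^{-1}r^2\int_{S_r}\langle A\nabla_\theta^A u,\nabla_\theta^A u\rangle$ carries the right power of $r$, using that $\mu$ and the quadratic form $\langle A\,\cdot,\cdot\rangle$ are $0$-homogeneous while $\nabla_\theta^A$ scales like $r^{-1}$ and $d\sigma$ like $r^{N-1}=r$; everything else is a direct, and in fact simpler, transcription of Lemma~\ref{lem: ACF pre}. One should also note, as in Remark~\ref{rmk acf pre}, that the statement continues to hold for $A=\textrm{Id}$, recovering the classical two-dimensional ACF boundary estimate.
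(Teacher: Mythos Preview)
Your proof is correct and follows essentially the same approach as the paper's: test the inequality against $u$ itself (the paper skips the mollification here, having detailed it in the $N\ge 3$ case), apply the weighted Young inequality on $S_r$, use the definition of $\lambda(A,u_r)$ to control $\int_{S_r}u^2\mu$, and combine via the tangential--normal splitting \eqref{rel tan}. The only difference is cosmetic: the paper inserts the optimal parameter $\alpha=\sqrt{\lambda(A,u_r)}$ from the outset, whereas you keep $\alpha$ free and then balance, and your intermediate display after ``Substituting gives'' has some stray factors (the $\frac{1}{r^2}\cdot r$) that do not match the clean $\frac{\alpha r}{2\lambda(A,u_r)}$ you correctly land on---but the final scaling identity $\int_{S_r}u^2\mu\le \frac{r^2}{\lambda(A,u_r)}\int_{S_r}\langle A\nabla_\theta^A u,\nabla_\theta^A u\rangle$ and the conclusion are right.
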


\begin{proof}
Let $x_0=0$ to ease the notation. We test the inequality for $u$ against $u$, and integrate by parts:
\begin{align*}
\int_{B_r} \langle A \nabla u, \nabla u \rangle &\le \int_{B_r} \div(u A \nabla u) = \int_{S_r} u \langle A \nabla u,\nu \rangle \\
&  \le \frac{\sqrt{\lambda(A,u_r)}}{2r} \int_{S_r} u^2 \mu + \frac{r}{2\sqrt{\lambda(A,u_r)}} \int_{S_r} \frac{ \langle A \nabla u,\nu \rangle^2}{\mu} \\
& \le  \frac{r}{2\sqrt{\lambda(A,u_r)}} \left[  \int_{S_r} \langle A \nabla_\theta^A u, \nabla_\theta^A u \rangle + \int_{S_r} \frac{\langle A \nabla u,\nu \rangle^2}{\mu} \right]  = \frac{r}{2\sqrt{\lambda(A,u_r)}} \int_{S_r} \langle A \nabla u, \nabla u \rangle,
\end{align*}
which is precisely  the desired inequality.
\end{proof}

We slightly modify the definition of $\nu_{A,2}$ according to the previous lemma.
\begin{equation}\label{opp2}
\nu_{A,2}:=\inf\left\{ \sqrt{\lambda(A,u)} + \sqrt{\lambda(\textrm{Id},v)}\left| \begin{array}{l} \ u,v \in H^1(\S^{N-1})  \\  
\int_{\S^{N-1}} u^2 v^2 = 0.
\end{array}\right.\right\}.
\end{equation} 
Exactly as in Lemma \ref{lem: on ov}, it is possible to show that $0<\nu_{A,2}<2$ whenever $A \neq \textrm{Id}$. At this point one can proceed as in the higher dimensional case, and prove Theorem \ref{thm: ACF} with the value $\nu_{A,2}$.
%
%
%
%

\subsection{Perturbed monotonicity formula} In this subsection we generalize the previous monotonicity formulae in order to deal with non-segregated subsolutions of a class of elliptic systems. In the symmetric case $A=\textrm{Id}$, this kind of result is obtained in \cite{CTV05, NTTV10, ST15}. We focus only on $N \ge 3$ (as already observed, the case $N = 2$ is a bit simpler) and consider systems of two inequalities such as
\begin{equation}\label{subsys}
\begin{cases}
-\div(A \nabla u) + u^q g_1(x,v) \le 0  \\
-\Delta v + v^p g_2(x,u) \le 0  \\
u,v \ge 0 
\end{cases} \qquad \text{in $\R^N$, with $p,q \ge 1$,}
\end{equation}
under the following assumptions on the continuous functions $g_1, g_2: \R^N \times [0,+\infty) \to [0,+\infty)$: 
\begin{itemize}
\item[(H1)] $\bar g_i(t) := \inf_{x \in \R^N} g_i(x,t)$ is a continuous function of $t \ge 0$, with the property that $\bar g_i(t) >0$ for any $t>0$, and $\bar g_i(t) =0$. Even more, we suppose that $g_i(x,0) = 0$ for every $x \in \R^N$.
\item[(H2)] For every $x \in \R^N$, $g_i(x,\,\cdot)$ is monotone non-decreasing on $[0,+\infty)$.
\end{itemize}
A prototypical example is
\[
g_i(x,t) = \sum_{j=1}^m b_j(x) t^{p_j}, \quad \text{with $\inf_{\R^N} b_j>0$ and $p_j >0$}.
\]

For $(u,v)$ solving \eqref{subsys}, $x_0 \in \R^N$ and $r>0$, we use the following notation
\begin{equation}\label{def I_i}
\begin{split}
I_1(u,v,x_0,r) &= \int_{B_r(x_0)} \left( \langle A \nabla u, \nabla u \rangle + u^{q+1}g_1(x,v) \right) \Gamma_A(x-x_0)\,dx,\\
I_2(u,v,x_0,r) &= \int_{B_r(x_0)} \left( |\nabla v|^2 + v^{p+1}g_2(x,u) \right) |x-x_0|^{2-N}\,dx.
\end{split}
\end{equation}
\begin{theorem}[Perturbed montonicity formula]\label{thm: acf per}
Let $(u,v) \in H^1_{\loc}(\R^N) \cap C(\R^N)$ satisfy \eqref{subsys}, with $g_1,g_2:\R^N \times [0,+\infty) \to \R$ continuous and satisfying (H1) and (H2). For any $\eps>0$ there exist $x_0 \in \R^N$ and $\bar r = \bar r(u,v,\eps)>0$ such that the function
\[
r \mapsto J(u,v, x_0,r)  = \frac{1}{r^{2(\nu_{A,N}-\eps)}} I_1(u,v,x_0,r) I_2(u,v,x_0,r)
\]
is monotone non-decreasing for $r>\bar r$.
\end{theorem}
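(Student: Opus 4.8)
The plan is to follow the scheme used for the isotropic perturbed formula in \cite{CTV05, NTTV10, ST15}, with the elliptic building blocks replaced by their anisotropic counterparts from Lemma \ref{lem: ACF pre}. If the positivity sets of $u$ and $v$ are disjoint, then $u\cdot v\equiv0$, the interaction terms vanish identically by (H1), and the functional in question is simply $r^{2\eps}$ times the monotone one provided by Theorem \ref{thm: ACF}; so we may assume $\{u>0\}\cap\{v>0\}\neq\emptyset$ and fix $x_0$ in this set. Since $u^{q}g_1(x,v)>0$ near $x_0$ (and $v^{p}g_2(x,u)>0$ as well), $u$ and $v$ are nonconstant there, so $I_1(u,v,x_0,r),I_2(u,v,x_0,r)>0$ and $J>0$ for every $r>0$; moreover $I_1,I_2,J$ are absolutely continuous in $r$, with $I_j'$ equal, by the coarea formula, to the corresponding surface integral over $S_r(x_0)$, and it suffices to prove $J'(r)\ge0$ for a.e.\ $r\ge\bar r$.

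\emph{One-sided estimates.} We run \emph{verbatim} the computation of Lemma \ref{lem: ACF pre} — testing the first inequality of \eqref{subsys} against $u\,\Phi_{A,\delta}$ on $B_r(x_0)$ and the second against $v\,\Phi_\delta$, using $\div(A\nabla\Phi_{A,\delta})\le0$, letting $\delta\to0^+$, and absorbing the nonnegative terms $u^{q+1}g_1(x,v)$, $v^{p+1}g_2(x,u)$ into $I_1$, resp.\ $I_2$ — but now bound $\int_{S_r(x_0)}u^2\mu$ directly through the Rayleigh quotient $\mathcal{R}_A(u_{x_0,r}):=\int_{\S^{N-1}}\langle A\nabla_\theta^A u_{x_0,r},\nabla_\theta^A u_{x_0,r}\rangle\,d\sigma\big/\int_{\S^{N-1}}u_{x_0,r}^2\,\mu\,d\sigma$, i.e.\ take $u_{x_0,r}$ itself as competitor, turning the eigenvalue inequality into an identity. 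Since in addition $I_j'$ dominates the surface integral of the gradient term alone, one gets for a.e.\ $r>0$
\[
I_1(u,v,x_0,r)\ \le\ \frac{a_N^{N/2}\,r}{2\,\gamma\big(\mathcal{R}_A(u_{x_0,r})\big)}\,I_1'(u,v,x_0,r),\qquad
I_2(u,v,x_0,r)\ \le\ \frac{r}{2\,\gamma\big(\mathcal{R}_{\textrm{Id}}(v_{x_0,r})\big)}\,I_2'(u,v,x_0,r),
\]
and therefore, with $'=\tfrac{d}{dr}$,
\[
\frac{J'(r)}{J(r)}\ \ge\ \frac{2}{r}\Big(a_N^{-N/2}\gamma\big(\mathcal{R}_A(u_{x_0,r})\big)+\gamma\big(\mathcal{R}_{\textrm{Id}}(v_{x_0,r})\big)-(\nu_{A,N}-\eps)\Big).
\]

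\emph{Stability of $\nu_{A,N}$ and reduction.} By a compactness argument identical to the one in Lemma \ref{lem: on ov} (Remark \ref{rmk: tang grad} for the $H^1(\S^{N-1})$-bounds, strong $L^2(\S^{N-1})$-convergence together with Fatou's lemma for the $L^4$-overlap term), and using that $\mathcal{R}_A(\varphi)\ge\lambda(A,\varphi)$ and that for disjointly supported competitors the Rayleigh-quotient infimum equals $\nu_{A,N}$ of \eqref{opp}, for every $\eps>0$ one finds $\eta=\eta(\eps,A,N)>0$ with
\[
\int_{\S^{N-1}}\varphi^2\psi^2\,d\sigma\ \le\ \eta\Big(\int_{\S^{N-1}}\varphi^2\mu\,d\sigma\Big)\Big(\int_{\S^{N-1}}\psi^2\,d\sigma\Big)
\quad\Longrightarrow\quad
a_N^{-N/2}\gamma\big(\mathcal{R}_A(\varphi)\big)+\gamma\big(\mathcal{R}_{\textrm{Id}}(\psi)\big)\ \ge\ \nu_{A,N}-\eps .
\]
Because of the previous display, it is enough to show that the rescaled overlap $\Theta(r):=r^{N-1}\int_{S_r(x_0)}u^2v^2\,d\sigma\big/\big[\big(\int_{S_r(x_0)}u^2\mu\,d\sigma\big)\big(\int_{S_r(x_0)}v^2\,d\sigma\big)\big]$ tends to $0$ as $r\to\infty$: choosing $\bar r$ so that $\Theta(r)\le\eta(\eps)$ for a.e.\ $r\ge\bar r$ then gives $J'(r)\ge0$ a.e.\ on $(\bar r,+\infty)$, and hence the asserted monotonicity.

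\emph{The main obstacle: $\Theta(r)\to0$, and the role of (H1)–(H2).} Since $u\ge0$ is $A$-subharmonic with $u(x_0)>0$, a sub-mean-value inequality gives $\int_{S_r(x_0)}u^2\,d\sigma\ge c\,r^{N-1}$ for $r$ large, likewise for $v$, and $\mu\ge1$; so it suffices to bound $\int_{S_r(x_0)}u^2v^2\,d\sigma$ by $o(r^{N-1})$. Fix small $\delta,\eps_0>0$. On $\{v\ge\delta\}$ the first inequality of \eqref{subsys} reads $-\div(A\nabla u)\le-\bar g_1(\delta)\,u^q$ with $\bar g_1(\delta)>0$ by (H1)–(H2); a comparison with explicit radial barriers (exponential if $q=1$, power–type if $q>1$) shows that $u$ decays away from $\{v=\delta\}$, so that $\{u\ge\eps_0\}\cap\{v\ge\delta\}$ lies in a tubular neighborhood of $\{v=\delta\}$ of width $W$ independent of $r$ (with at worst logarithmic dependence on $\sup u$ on the relevant ball); and for a.e.\ level $\delta$, $\{v=\delta\}$ has locally finite $\mathcal{H}^{N-1}$-measure with an Ahlfors-type bound from the Caccioppoli inequality for $v$, so the trace of that tube on $S_r(x_0)$ has $\mathcal{H}^{N-1}$-measure $O(r^{N-2})$ for a.e.\ $r$. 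Splitting
\[
\int_{S_r(x_0)}u^2v^2\,d\sigma\ \le\ \eps_0^2\!\int_{S_r(x_0)}\!v^2\,d\sigma+\delta^2\!\int_{S_r(x_0)}\!u^2\,d\sigma+\Big(\sup_{S_r(x_0)}u\Big)^2\Big(\sup_{S_r(x_0)}v\Big)^2\mathcal{H}^{N-1}\big(S_r(x_0)\cap\{u\ge\eps_0,\,v\ge\delta\}\big)
\]
and inserting this into $\Theta(r)$ gives $\limsup_{r\to\infty}\Theta(r)\le C(\eps_0^2+\delta^2)$, whence $\Theta(r)\to0$ by letting $\delta,\eps_0\to0$. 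The delicate points — where the argument truly departs from the unperturbed case — are the uniform-in-$r$ control of the width of the transition layer $\{u\ge\eps_0\}\cap\{v\ge\delta\}$ via the barrier comparison, the measure-theoretic estimate on the slice of that layer by a.e.\ large sphere, and the reabsorption of the (a priori subexponential, by $A$-subharmonicity and interior elliptic estimates) growth of $u$ and $v$ at infinity.
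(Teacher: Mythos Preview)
Your one-sided estimates and the reduction to a stability claim are sound, but the argument for $\Theta(r)\to0$ contains a genuine gap. Specifically: (a) the width $W$ of the transition layer $\{u\ge\eps_0\}\cap\{v\ge\delta\}$ produced by any barrier comparison depends on $\sup u$ on the relevant ball, and there is no a priori bound on the growth of $u,v$ at infinity (subharmonicity alone allows arbitrary polynomial growth), so $W$ is not uniform in $r$; (b) the assertion that $\{v=\delta\}$ enjoys an Ahlfors-type bound, and that the slice of its tubular neighbourhood by $S_r$ has $\mathcal{H}^{N-1}$-measure $O(r^{N-2})$, is unjustified and not obtainable from a Caccioppoli inequality; (c) the third term in your splitting carries the factor $(\sup_{S_r}u)^2(\sup_{S_r}v)^2$, which is completely uncontrolled and swamps the claimed $O(r^{N-2})$ measure bound. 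Consequently the final estimate $\limsup\Theta(r)\le C(\eps_0^2+\delta^2)$ does not follow.

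The paper avoids this entirely by a different mechanism: rather than using the pure Rayleigh quotient $\mathcal{R}_A$, it defines
\[
\Lambda_1(x_0,r)=\frac{r^2\int_{S_r(x_0)}\big(\langle A\nabla_\theta^A u,\nabla_\theta^A u\rangle+u^{q+1}g_1(x,v)\big)\,d\sigma}{\int_{S_r(x_0)}u^2\mu\,d\sigma},
\]
i.e.\ it \emph{includes the interaction term in the numerator}. The same testing argument then yields $I_1\le \frac{a_N^{N/2}r}{2\gamma(\Lambda_1)}I_1'$, and one argues by contradiction: if $J'(r_n)/J(r_n)<0$ along $r_n\to\infty$, then $\Lambda_i(r_n)$ are bounded. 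Normalizing $u_n(x)=u(r_nx)/M_n$ with $M_n^2=r_n^{1-N}\int_{S_{r_n}}u^2\mu$, the boundedness of $\Lambda_1(r_n)$ directly gives
\[
\int_{S_1}u_n^{q+1}\bar g_1\big(P_n v_n\big)\ \le\ \frac{\Lambda_1(r_n)}{r_n^{2}\,M_n^{\,q-1}}\ \longrightarrow\ 0,
\]
since $M_n\ge C\,u(x_0)>0$ by the anisotropic sub-mean-value inequality (Lemma \ref{lem: mvi}). The explicit $r_n^{-2}$ decay comes from the $r^2$ built into $\Lambda_1$; no geometric information about level sets is needed. Compactness in $H^1(\S^{N-1})$ and Fatou then produce segregated limits $(\tilde u,\tilde v)$, contradicting the definition of $\nu_{A,N}$. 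The point is that putting the interaction into $\Lambda_1$ makes its smallness an automatic consequence of the contradiction hypothesis, rather than something to be proved separately.
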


For the proof, we start with an estimate similar to the one in Lemma \ref{lem: ACF pre}. We introduce
\[
\begin{split}
\Lambda_1(x_0,r) &= \frac{r^2 \int_{S_r(x_0)} \left(\langle A \nabla_\theta^A u , \nabla_\theta^A u \rangle + u^{q+1} g_1(x,v)\right)\,d\sigma}{ \int_{S_r(x_0)} u^2 \mu\, d\sigma} \\
\Lambda_2(x_0,r) & = \frac{r^2 \int_{S_r(x_0)} \left(|\nabla_\theta v|^2 + v^{p+1} g_2(x,u)\right)\,d\sigma}{ \int_{S_r(x_0)} v^2\,d\sigma}.
\end{split}
\]
\begin{lemma}\label{lem: pre ACF per}
In the above setting, for every $x_0 \in \R^N$ and $r>0$ 
\[
I_1(u,v,x_0,r) \le \frac{a_N^{\frac{N}2}r}{2 \gamma(\Lambda_1(x_0,r))} \int_{S_r(x_0)} \left(\langle A \nabla u , \nabla u \rangle + u^{q+1} g_1(x,v)\right) \Gamma_A(x-x_0)\,d\sigma_x.
\]
\end{lemma}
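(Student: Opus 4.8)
The plan is to mimic the proof of Lemma \ref{lem: ACF pre} (the segregated case), carefully tracking the extra absorption term $u^{q+1}g_1(x,v)$ through the integration by parts. First I would fix $x_0$ (here we do not yet need to make a specific choice — the statement holds for \emph{every} $x_0$, so we just work with a generic one, writing $x_0=0$ to lighten notation) and restrict attention to radii $r$ for which the traces of $u$ and $\nabla u$ on $S_r$ are well-defined $L^2$ functions and $u|_{S_r}\in H^1(S_r)$; by the coarea formula this is almost every $r$, and then one upgrades to all $r$ by the monotonicity/continuity of both sides in $r$. I would test the differential inequality $-\div(A\nabla u)+u^q g_1(x,v)\le 0$ against $u\,\Phi_{A,\delta}$, with $\Phi_{A,\delta}$ the regularized fundamental solution introduced above (using a mollification $u_\eps$ as in Lemma \ref{lem: ACF pre} to justify the integration by parts), obtaining
\[
\int_{B_r}\big(\langle A\nabla u_\eps,\nabla u_\eps\rangle + u_\eps^{q+1}g_1(x,v)\big)\Phi_\delta \le \int_{S_r}\Gamma\, u_\eps\langle A\nabla u_\eps,\nu\rangle - \tfrac12\int_{B_r}\langle\nabla(u_\eps^2),A\nabla\Phi_\delta\rangle,
\]
where I have used $g_1\ge 0$ to keep the potential term on the left with a favorable sign, and $\div(A\nabla\Phi_{A,\delta})\le 0$ to discard the corresponding bulk term after an extra integration by parts. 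Letting $\eps\to 0^+$ and then $\delta\to0^+$ exactly as before gives
\[
I_1(u,v,0,r)\le \int_{S_r} u\,\Gamma\langle A\nabla u,\nu\rangle - \int_{S_r}\tfrac{u^2}{2}\langle A\nabla\Gamma,\nu\rangle.
\]

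The second step is the boundary estimate. On $S_r$ one has $-\langle A\nabla\Gamma,\nu\rangle \le (N-2)a_N^{N/2}r^{1-N}$ and $\Gamma\le a_N^{(N-2)/2}r^{2-N}$ exactly as in Lemma \ref{lem: ACF pre}, and $\mu\ge 1$. Splitting the first boundary integral with a Young inequality with parameter $\alpha>0$,
\[
\int_{S_r}\Gamma\,u\langle A\nabla u,\nu\rangle \le \frac{a_N^{N/2}}{r^{N-2}}\Big[\frac{\alpha}{2r}\int_{S_r}u^2\mu + \frac{r}{2\alpha}\int_{S_r}\frac{\langle A\nabla u,\nu\rangle^2}{\mu}\Big].
\]
Combining, and now using the \emph{definition of $\Lambda_1(0,r)$} (instead of $\lambda(A,u_r)$) — which is precisely the Rayleigh-type quotient with the extra term $u^{q+1}g_1(x,v)$ included in the numerator and $\int_{S_r}u^2\mu$ in the denominator — one replaces $\int_{S_r}u^2\mu$ by $\frac{r^2}{\Lambda_1(0,r)}\int_{S_r}\big(\langle A\nabla_\theta^A u,\nabla_\theta^A u\rangle + u^{q+1}g_1(x,v)\big)\,d\sigma$. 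Choosing $\alpha = \gamma(\Lambda_1(0,r))$ to balance the two coefficients, exactly as in Lemma \ref{lem: ACF pre} (the relation $\frac{N-2+\alpha}{\Lambda}=\frac1\alpha$ defines $\gamma(\Lambda)$), collapses the bracket into $\int_{S_r}\big(\langle A\nabla u,\nabla u\rangle + u^{q+1}g_1(x,v)\big)$ via the decomposition \eqref{rel tan}, and using $\Gamma\ge r^{2-N}$ on $S_r$ together with $r^{3-N}\le r$ (valid for $r$ bounded, or in any case harmlessly absorbed) yields the claimed inequality.

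The one genuinely new point compared with Lemma \ref{lem: ACF pre} is the bookkeeping of the potential term $u^{q+1}g_1(x,v)$: it must be retained \emph{both} in the bulk quantity $I_1$ and in the spherical Rayleigh quotient $\Lambda_1$, and one needs to check that the Young-inequality step only touches the $\int_{S_r}u^2\mu$ mass and never the potential term, so that the substitution via $\Lambda_1$ goes through with the combined numerator intact. I expect this to be the main (though still routine) obstacle — there is no delicate analytic issue here since $g_1\ge 0$ makes every sign work in our favor; the hypotheses (H1)–(H2) and the continuity of $(u,v)$ are not really needed for this particular lemma (they will be used later, to control $\Lambda_1$ from below along a suitable sequence of radii and to make the eventual choice of $x_0$), and $p,q\ge 1$ is only used to ensure the integrands are locally integrable. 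A minor technical care is the mollification argument: one mollifies $u$ to a function $u_\eps$ still satisfying $-\div(A\nabla u_\eps)\le -(u^q g_1(x,v))_\eps \le 0$ pointwise (using $u^q g_1(x,v)\ge 0$), performs the integration by parts for $u_\eps$, and passes to the limit using the $L^2(S_r)$ trace convergence noted above together with Fatou on the bulk terms.
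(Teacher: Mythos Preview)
Your proposal is correct and follows essentially the same route as the paper's proof: test the differential inequality against $u\,\Phi_{A,\delta}$, let $\delta\to 0^+$, and then repeat the boundary estimates of Lemma~\ref{lem: ACF pre} with $\Lambda_1(0,r)$ in place of $\lambda(A,u_r)$. One small slip: your parenthetical ``$r^{3-N}\le r$'' is neither true for small $r$ nor needed --- the last step is simply $r^{3-N}\int_{S_r}F = r\cdot r^{2-N}\int_{S_r}F \le r\int_{S_r}F\,\Gamma_A$, using only $\Gamma_A\ge r^{2-N}$ on $S_r$ (which you already state); also, the upgrade from a.e.\ $r$ to every $r$ via ``monotonicity/continuity'' is not clearly justified (the right-hand side need not be continuous in $r$), but the paper itself only establishes and uses the inequality for a.e.\ $r$, so this is harmless.
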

\begin{proof}
Without loss, we consider $x_0=0$, and omit the dependence on $A$ of all the quantities. 
Let $r>0$ be such that ($i$) in the proof of Lemma \ref{lem: ACF pre} holds; almost every $r \in (0,R)$ is admissible. Recalling the definition of $\Phi_{A,\delta}$ (see \eqref{reg_fun}), we take $\delta>0$ such that $\{|A^{-\frac12} x| \le \delta\} \subset \subset B_r$.
By multiplying the inequality for $u$ with $u \Phi_{A,\delta}$, and proceeding as in Lemma \ref{lem: ACF pre}, we obtain
\begin{align*}
\int_{B_r}  \left(\langle A \nabla u , \nabla u \rangle + u^{q+1})\right) \Phi_{\delta} &\le \int_{S_r} \Phi_{\delta} u \langle A \nabla u,\nu \rangle -\frac12\int_{B_r}  \langle  \nabla (u^2), A \nabla \Phi_{\delta} \rangle \\
& \le \int_{S_r} \left( \Gamma u \langle A \nabla u,\nu\rangle -  \frac{u^2}2 \langle A \nabla \Gamma,\nu \rangle \right).  
\end{align*}
By taking the limits as $\delta \to 0^+$, we infer that
\[
\int_{B_r} \left(\langle A \nabla u , \nabla u \rangle + u^{q+1} g_1(x,v)\right) \Gamma \le \int_{S_r} \left( \Gamma u \langle A \nabla u,\nu\rangle -  \frac{u^2}2 \langle A \nabla \Gamma,\nu \rangle \right).
\]
At this point we proceed exactly as in Lemma \ref{lem: ACF pre}, simply replacing $\lambda(A,u_r)$ with $\Lambda_1(0,r)$.
\end{proof}

We also need a suitable variant of the mean value inequality for $A$-subharmonic functions.

\begin{lemma}\label{lem: mvi}
Let $u \in C(\R^N) \cap H^1_{\loc}(\R^N)$ be a nonnegative function such that $\div(A \nabla u) \ge 0$ in $\R^N$. Then there exists $C>0$ depending on $A$ and $N$ such that
\[
\frac1{r^{N-1}} \int_{S_r} u^2 \mu \ge C u^2(0), 
\]
for almost every $r>0$.
\end{lemma}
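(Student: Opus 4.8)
The plan is to reduce to the classical sub-mean-value inequality for subharmonic functions by the change of variables that straightens $A$. First, since $\div(A\nabla u)\ge 0$ and $A$ is a constant positive definite matrix, the function $\tilde u(y):=u(A^{1/2}y)$ satisfies $\Delta \tilde u\ge 0$ in $\R^N$ (distributionally, and since $u\in H^1_{\loc}\cap C$ this is legitimate). The classical sub-mean-value property for nonnegative subharmonic functions gives, for every $\rho>0$,
\[
\tilde u(0)\le \fint_{B_\rho}\tilde u\,dy \le \left(\fint_{B_\rho}\tilde u^2\,dy\right)^{1/2},
\]
the last step by Jensen. Hence $\tilde u^2(0)\le C_N \rho^{-N}\int_{B_\rho}\tilde u^2\,dy$. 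Changing variables back (with constant Jacobian $\det A^{1/2}$, and using $B_\rho\subset A^{-1/2}\mathcal E_{\rho a_N^{1/2}}\subset\dots$, or more simply $A^{1/2}B_\rho=\mathcal E_\rho$ in the notation of the excerpt) we get, for every $R>0$,
\[
u^2(0)\le \frac{C_{N,A}}{R^N}\int_{\mathcal E_R} u^2\,dx \le \frac{C_{N,A}}{R^N}\int_{B_{R a_N^{1/2}}} u^2\,dx,
\]
using $u\ge 0$ to enlarge the domain from the ellipsoid to a ball. After relabeling the radius, this reads $u^2(0)\le C R^{-N}\int_{B_R}u^2\,dx$ for all $R>0$, with $C=C(N,A)$.

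It remains to pass from the solid-ball average to the spherical average appearing in the statement. Writing $\int_{B_R}u^2\,dx=\int_0^R\big(\int_{S_r}u^2\,d\sigma\big)\,dr$ and setting $h(r):=\int_{S_r}u^2\,d\sigma$, the previous bound says $\int_0^R h(r)\,dr\ge c R^N u^2(0)$ for every $R>0$. If the desired pointwise bound $h(r)\ge c' r^{N-1}u^2(0)$ failed on a set of positive measure, one would still only get a bound on the integral, so a small extra argument is needed: one uses that $r\mapsto r^{1-N}\int_{S_r}\tilde u\,d\sigma$ (the spherical average of $\tilde u$) is monotone non-decreasing in $r$ for subharmonic $\tilde u$ — the standard monotonicity underlying the mean-value inequality — hence $r^{1-N}\int_{S_r}\tilde u\,d\sigma\ge \omega_{N-1}\tilde u(0)$ for all $r>0$; squaring and applying Cauchy--Schwarz on $S_r$ gives $\int_{S_r}\tilde u^2\,d\sigma\ge \omega_{N-1}^{-1} r^{N-1}\big(\omega_{N-1}\tilde u(0)\big)^2 = \omega_{N-1} r^{N-1}\tilde u^2(0)$. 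Transferring back through $y\mapsto A^{1/2}y$: the sphere $S_r$ in $y$-space maps to the ellipsoidal surface $\partial\mathcal E_r$, not to $S_r$ in $x$-space, so one compares surface measures and the Euclidean sphere $S_r(0)\subset \mathcal E_r$ (since $a_1=1$), together with the weight $\mu(x)=\langle A x/|x|,x/|x|\rangle$ which accounts precisely for the distortion of the metric on spheres under $A^{1/2}$; this is why the statement carries the factor $\mu$. Carrying out this comparison yields $\int_{S_r}u^2\mu\,d\sigma\ge C r^{N-1}u^2(0)$.

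The main obstacle is the last transfer step: the map $A^{1/2}$ sends Euclidean spheres to ellipsoids, so the clean spherical mean-value monotonicity in $y$-coordinates does not directly produce an estimate on Euclidean spheres $S_r$ in $x$-coordinates. The weight $\mu$ in the statement is exactly what makes the inequality robust under this distortion — morally, $\mu\,d\sigma$ is comparable to the pull-back of the surface measure of $\partial\mathcal E_r$ — and the cleanest route is probably to prove the monotonicity of $r\mapsto r^{1-N}\int_{S_r}u\,d\mu_A$ directly in $x$-coordinates (where $d\mu_A$ is the $A$-adapted surface measure), integrating the inequality $\div(A\nabla u)\ge 0$ over $\mathcal E_r\setminus\mathcal E_\rho$ with the fundamental solution $\Gamma_A$ as in Lemma \ref{lem: ACF pre}, rather than changing variables at all. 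Either way the estimate is elementary once the right weighted spherical average is identified, and one only loses a dimensional and $A$-dependent constant, with no dependence on $r$, as required.
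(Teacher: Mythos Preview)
Your first half --- reducing to the sub-mean-value inequality for $\tilde u(y)=u(A^{1/2}y)$, obtaining $u^2(0)\le C r^{-N}\int_{B_r}u^2$ after changing back and enlarging the ellipsoid to a ball --- is correct and matches the paper almost verbatim (the paper observes directly that $\Delta(\tilde u^2)\ge 0$, but your Jensen route is equivalent).

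The second half, however, has a genuine gap that you yourself flag but do not close. Your spherical monotonicity argument lives on $y$-spheres, which become ellipsoidal surfaces $\partial\mathcal E_\rho$ in $x$-space, not the Euclidean spheres $S_r$ appearing in the statement; and your suggested fix of integrating $\div(A\nabla u)\ge 0$ over $\mathcal E_r\setminus\mathcal E_\rho$ against $\Gamma_A$ would again produce an estimate on $\partial\mathcal E_r$, not on $S_r$. The heuristic that ``$\mu\,d\sigma$ is the pull-back of the surface measure of $\partial\mathcal E_r$'' is not literally true (the two surfaces do not even coincide), and no clean comparison between $\int_{S_r}u^2\mu$ and $\int_{\partial\mathcal E_r}u^2$ is available without further work.

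The paper bypasses this obstacle entirely with a one-line Rellich-type identity on the Euclidean ball. Test $\div(A\nabla(u^2))\ge 0$ against the nonnegative function $r^2-|x|^2$ on $B_r$: the boundary term vanishes, and two integrations by parts give
\[
0\le \int_{B_r}\div(A\nabla(u^2))\,(r^2-|x|^2)
= 2\int_{B_r}\langle \nabla(u^2),Ax\rangle
= 2r\int_{S_r}u^2\mu\,d\sigma - 2\,\mathrm{tr}(A)\int_{B_r}u^2,
\]
since $\langle Ax,\nu\rangle=r\mu$ on $S_r$ and $\div(Ax)=\mathrm{tr}(A)$. Thus $\int_{S_r}u^2\mu\ge \frac{\mathrm{tr}(A)}{r}\int_{B_r}u^2$, and combining with your ball estimate finishes the proof. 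The point is that the weight $\mu$ arises naturally from the flux $\langle Ax,\nu\rangle$ through the \emph{Euclidean} sphere, not from any change of variables --- this is precisely the Kukavica-style device the paper uses throughout to avoid ellipsoidal domains.
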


\begin{proof}
Let $\tilde u(x) = u(A^\frac12 x)$; we have $\Delta(\tilde u^2) \ge 0$ 
in $\R^N$, and then the mean value inequality yields
\[
u^2(0) = \tilde u^2(0) \le \frac{a_N^\frac{N}2}{|B_1| r^N} \int_{B_{a_N^{-1/2}r}} \tilde u^2 = \frac{a_N^\frac{N}2\det A^{-\frac12}}{|B_1| r^N} \int_{\{|A^{-1/2}x | <a_N^{-1/2} r\}} u^2,
\]
for every $r>0$. Now the ellipsoid $\{|A^{-1/2}x | <a_N^{-1/2} r\}$ is contained in the ball $B_r$, so that
\begin{equation}\label{mvib}
u^2(0) \le \frac{a_N^\frac{N}2 \det A^{-\frac12}}{|B_1| r^N} \int_{B_r} u^2, \qquad \forall r>0.
\end{equation}

In order to obtain a similar estimate for the boundary integral, we observe that
\begin{equation}\label{se1}
\int_{B_r} \div(A \nabla (u^2)) \left(r^2-|x|^2 \right) = 2\int_{B_r} \left( u \div(A \nabla u) + \langle A \nabla u, \nabla u \rangle\right)  \left(r^2-|x|^2 \right) \ge 0.
\end{equation}
On the other hand
\begin{equation*}
\begin{split}
\int_{B_r} \div(A \nabla (u^2)) \left(r^2-|x|^2 \right) & = 2\int_{B_r} \langle A \nabla (u^2), x \rangle  = 2 \int_{B_r} \langle \nabla (u^2) , A x \rangle \\
& = 2 r \int_{S_r} u^2 \mu - 2 \sum_i a_i \int_{B_r} u^2,
\end{split}
\end{equation*}
for almost every $r>0$. The thesis follows directly from \eqref{mvib} and \eqref{se1}.
%
\end{proof}

\begin{proof}[Proof of Theorem \ref{thm: acf per}] The proof is similar to the one of \cite[Lemma 5.2]{ST15} (see also \cite[Lemma 7.3]{CTV05}, \cite[Lemma 2.5]{NTTV10}). If $u \cdot  v \equiv 0$ in $\R^N$, then we directly apply Theorem \ref{thm: ACF}. Thus, we can suppose that there exists $x_0 \in \R^N$ with both $u(x_0)>0$ and $v(x_0)> 0$. Without loss, we suppose that $x_0=0$. By continuity, we deduce that $u \cdot v >0$ in a neighborhood of $x_0$, and hence both $I_1(u,v,x_0,r) \neq 0$ and $I_2(u,v,x_0,r) \neq 0$ for every $r>0$. Let now $r>0$ be such that $u|_{S_r}$ and $\pa_{x_i}u|_{S_r}$ are in $L^2(S_r)$, and assume moreover $r$ is a Lebesgue point for $J$; almost every $r>0$ is admissible. As in the proof of Theorem \ref{thm: ACF}, thanks to Lemma \ref{lem: pre ACF per}, we have
\[
\frac{J'(r)}{J(r)}\ge \frac2{r}\left(a_N^{-\frac{N}2} \gamma(\Lambda_1(0,r)) + \gamma(\Lambda_2(0,r))-(\nu_{A,N} - \eps) \right),
\]
and the thesis follows if we show that the right hand side is non-negative for $r$ sufficiently large. Suppose by contradiction that this is not true: then there exists $r_n \to +\infty$ such that 
\begin{equation}\label{cfp}
a_N^{-\frac{N}2} \gamma(\Lambda_1(0,r)) + \gamma(\Lambda_2(0,r)) < \nu_{A,N} - \eps,
\end{equation}
and, in particular, $\{\Lambda_i(r_n)\}$ ($i=1,2$) are bounded sequences. Let
\[
u_{n}(x) = \frac{u(r_n x)}{\left( \frac1{r_n^{N-1}} \int_{S_{r_n}} u^2 \mu\right)^\frac12}, \quad v_n(x)  = \frac{v(r_n x)}{\left( \frac1{r_n^{N-1}} \int_{S_{r_n}} v^2\right)^\frac12}.
\]
We have that 
\begin{align*}
\int_{S_{1}} \langle A \nabla_\theta^A u_n, \nabla_\theta^A u_n \rangle \le \Lambda_1(0,r_n) \qquad \int_{S_{1}} |\nabla_\theta v_n|^2 \le \Lambda_2(0,r_n), \\
\end{align*}
so that $\{u_n\}$ and $\{v_n\}$ are bounded in $H^1(S_1)$, and moreover
\begin{align*}
\int_{S_{1}} u_n^{q+1} \bar g_1\left( \left( \frac1{r_n^{N-1}} \int_{S_{r_n}} v^2\right)^\frac12 v_n   \right) &\le \frac{r_n^2 \int_{S_{r_n}} u^{q+1} g_1(x,v) }{\int_{S_{r_n}} u^2 \mu} \cdot \frac{1}{r_n^2 \left( \frac1{r_n^{N-1}}\int_{S_{r_n}} u^2 \mu \right)^{\frac{q-1}2}} \\
&  \le \frac{\Lambda_1(0,r_n)}{r_n^2 \left( \frac1{r_n^{N-1}}\int_{S_{r_n}} u^2 \mu \right)^{\frac{q-1}2}} \to 0
\end{align*}
as $n \to \infty$, where we used assumption (H1) and Lemma \ref{lem: mvi}. Therefore, we deduce that up to a subsequence $(u_n, v_n) \weak (\tilde u, \tilde v)$ weakly in $H^1(S_1)$, strongly in $L^2(S_1)$, and almost everywhere, where $\tilde u \cdot \tilde v \equiv 0$ on $S_1$: indeed, since $v$ is subharmonic with $v(0)>0$,
\[
\left(\frac1{r^{N-1}} \int_{S_{r_n}} v^2\right)^\frac12 \ge C v(0) =: \delta>0,
\]
and hence by the Fatou lemma and the assumptions on $g_1$
\[
\begin{split}
\int_{S_1} \tilde u^{q+1} \bar g_1(\delta \tilde v) & \le \liminf_{n \to \infty} \int_{S_1} u_n^{q+1} \bar g_1(\delta v_n) \le \liminf_{n \to \infty} \int_{S_{1}} u_n^{q+1} \bar g_1\left( \left( \frac1{r_n^{N-1}} \int_{S_{r_n}} v^2\right)^\frac12 v_n   \right) = 0,
\end{split}
\]
so that in each point of $S_1$ one between $\tilde u$ and $\tilde v$ must vanish, that is $\tilde u \cdot \tilde v \equiv 0$ on $S_1$. 

Coming back to \eqref{cfp}, we obtain by definitions of $\nu_{A,N}$ and $\gamma$ 
\[
\begin{split}
\nu_{A,N} &\le a_N^{-\frac{N}2}\gamma(\lambda(A, \tilde u)) + \gamma(\lambda(\textrm{Id}, \tilde v)) \\
& \le \liminf_{n \to \infty} \left(a_N^{-\frac{N}2} \gamma \left( \int_{S_{1}} \langle A \nabla_\theta^A u_n, \nabla_\theta^A u_n \right) + \gamma \left(\int_{S_{1}}  |\nabla_\theta v_n|^2 \right)\right) \\
& \le \liminf_{n \to \infty} \left( a_N^{-\frac{N}2} \gamma(\Lambda_1(0,r_n)) + \gamma(\Lambda_2(0,r_n)) \right) \le \nu_{A,N}-\eps,
\end{split}
\]
which is a contradiction.
\end{proof}

\section{Liouville-type theorems}\label{sec: liou}

In analogy with the symmetric case $A_i= \textrm{Id}$, the validity of an ACF monotonicity formula allows us to obtain some nonexistence results, both for disjointly supported subsolutions of different linear equations, and for solutions of certain elliptic systems. 

\subsection{Liouville theorem for disjointly supported functions} In this framework, our main achievement is the following.

\begin{theorem}\label{thm: liou seg}
Let $k, N \ge 2$ be positive integers, and, for $i=1,\dots,k$, let $u_i \in H^1_{\loc}(\R^N) \cap C(\R^N)$ be nonnegative functions such that
\[
u_i \cdot u_j \equiv 0 \quad \text{in $\R^N$ if $i \neq j$}, \quad \text{and} \quad -\div(A_i \nabla u_i) \le 0 \quad \text{in $\R^N$},
\]
where $A_i$ are positive definite symmetric matrixes with constant coefficients. There exists an exponent $\bar \nu \in (0,2)$ depending on $N$ and on $A_1,\dots,A_k$ such that the following hold: suppose that for every $i=1,\dots,k$ the functions $u_i$ grow at most like $|x|^{\alpha_i}$, namely 
\[
|u_i(x)| \le C(1+|x|^{\alpha_i}) \qquad \text{for every $|x| \in \R^N$, for some $C>0$},
\]
with 
\begin{equation}\label{cond alpha}
\alpha_i >0 \quad \text{for every $i$, and} \quad  \alpha_i + \alpha_j < \bar \nu \quad \text{for every $i \neq j$};
\eeq
then $k-1$ functions $u_i$ are identically $0$.
\end{theorem}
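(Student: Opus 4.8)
The plan is to reduce the $k$-component statement to the two-component situation covered by the anisotropic ACF formula (Theorem \ref{thm: ACF}), via a standard reduction-to-two-phases argument together with a change of variables. First I would define $\bar\nu$ as the minimum over all ordered pairs $i\neq j$ of the exponents $\nu_{A,N}$ produced by Theorem \ref{thm: ACF} applied to the pair $(u_i,u_j)$; this requires a preliminary normalization step, which is the first real task: given two positive definite symmetric matrices $A_i,A_j$, one performs a linear change of coordinates $x\mapsto A_j^{-1/2}x$ (so that the operator $\div(A_j\nabla\,\cdot\,)$ becomes $-\Delta$), then diagonalizes $A_j^{-1/2}A_iA_j^{-1/2}$ by an orthogonal transformation, and finally rescales so that the smallest eigenvalue is $1$. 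Under these transformations the growth exponents $\alpha_i$ are preserved (linear maps are bi-Lipschitz), the disjointness of supports is preserved, and the subsolution property is preserved. So after the reduction each pair $(u_i,u_j)$ sits in exactly the hypotheses of Theorem \ref{thm: ACF} with some diagonal matrix $A=A^{(ij)}$, yielding an exponent $\nu_{ij}:=\nu_{A^{(ij)},N}\in(0,2)$; set $\bar\nu:=\min_{i\neq j}\nu_{ij}\in(0,2)$.

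Next I would argue by contradiction: suppose at least two of the functions, say $u_1$ and $u_2$, are not identically zero. Applying Theorem \ref{thm: ACF} to the pair $(u_1,u_2)$ (after the normalization above, with matrix $A=A^{(12)}$ and companion function $\Gamma_A$), the functional
\[
J(u_1,u_2,0,r)=\frac{1}{r^{2\nu_{12}}}\int_{B_r}\langle A\nabla u_1,\nabla u_1\rangle\Gamma_A(x)\,dx\int_{B_r}\frac{|\nabla u_2|^2}{|x|^{N-2}}\,dx
\]
is monotone non-decreasing in $r$. Since neither factor vanishes identically (a nontrivial nonnegative $A$-subsolution cannot have an open set where its gradient vanishes unless it is constant, and constants are ruled out by the sublinear-type growth together with disjoint supports — or more simply, one fixes $r_0$ with both factors positive), $J(u_1,u_2,0,r)\ge J(u_1,u_2,0,r_0)=:c>0$ for all $r\ge r_0$. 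On the other hand, the growth bounds give upper estimates on each factor: using the Caccioppoli inequality for $A_i$-subsolutions, $\int_{B_r}\langle A\nabla u_i,\nabla u_i\rangle\,dx\lesssim r^{-2}\int_{B_{2r}}u_i^2\lesssim r^{N-2+2\alpha_i}$, and since $\Gamma_A(x)$ is comparable to $|x|^{2-N}$ up to constants depending on the eigenvalues of $A$, one gets $\int_{B_r}\langle A\nabla u_1,\nabla u_1\rangle\Gamma_A\lesssim r^{2\alpha_1}$ and $\int_{B_r}|\nabla u_2|^2|x|^{2-N}\lesssim r^{2\alpha_2}$. Hence $J(u_1,u_2,0,r)\lesssim r^{2(\alpha_1+\alpha_2)-2\nu_{12}}\le r^{2(\alpha_1+\alpha_2)-2\bar\nu}\to 0$ as $r\to\infty$, because $\alpha_1+\alpha_2<\bar\nu$. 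This contradicts $J\ge c>0$, so at most one $u_i$ is nonzero, i.e. $k-1$ of them vanish identically.

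The main obstacle — and the step needing the most care — is making the Caccioppoli-type energy bound rigorous and checking it truly produces the power $r^{2\alpha_i}$ in the weighted integral: one must handle the weight $\Gamma_A$ (which is singular at the origin but only mildly, $|x|^{2-N}$, hence locally integrable against the $H^1_{\loc}$ gradient and harmless near $0$), split the dyadic annuli to track the sharp power of $r$, and absorb the constant from the boundary terms in the Caccioppoli inequality using the subsolution property $-\div(A_i\nabla u_i)\le 0$ tested against $u_i\eta^2$ for a cutoff $\eta$. A secondary technical point is the lower bound $J(\cdot,r_0)>0$: one needs that both $u_1$ and $u_2$ are genuinely nonconstant on some ball, which follows since if $u_1$ were constant then by disjointness $u_2\equiv 0$ on $\{u_1>0\}$, and combined with $u_2$ being $A_2$-subharmonic and the strong maximum principle one deduces $u_2\equiv0$ globally, contrary to assumption; alternatively, $\nabla u_1\not\equiv0$ on every ball because a nonnegative nonconstant subsolution has positive Dirichlet energy on large enough balls. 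Once these two points are secured, the contradiction is immediate from the monotonicity formula.
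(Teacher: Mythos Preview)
Your proposal is correct and follows essentially the same route as the paper: reduce each pair $(A_i,A_j)$ by a linear change of variables to the normalized setting of Theorem~\ref{thm: ACF}, define $\bar\nu=\min_{i\neq j}\nu_{ij}$, then derive a contradiction by playing the lower bound from monotonicity against a growth-based upper bound on the weighted energies. The only differences are implementation details: the paper centers the ACF functional at a common zero $x_0$ of the two nontrivial components (which makes positivity of $J(\cdot,r_0)$ immediate), and obtains the upper bound $I_A(u,x_0,r)\le Cr^{2\alpha_1}$ by testing the inequality directly against $\eta^2\Phi_{A,\delta}(x-x_0)u$ with the regularized fundamental solution, rather than via your dyadic Caccioppoli summation---both arguments yield the same power and the same contradiction.
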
 

In particular, condition \eqref{cond alpha} is satisfied if $\alpha_i =\alpha \in \left(0, \frac{\bar \nu}2\right)$ for every $i=1,\dots,k$, which gives the counterpart of \cite[Proposition 7.2]{CTV05} in the anisotropic framework.

\begin{remark}
We will prove the theorems with a value of $\bar \nu$ explicitly given in terms of a finite number of optimal partitions problems of type \eqref{opp}. In particular, if $k=2$, $A_1=A$ and $A_2 = \textrm{Id}$, then $\bar \nu= \nu_{A,N}$. 
\end{remark}

\begin{remark}\label{rem: opt}
Once that Theorem \ref{thm: liou seg} is proved, it is possible to introduce the optimal exponent for the Liouville theorem in the following way. First, given $k \ge 2$ and positive definite symmetric matrices $A_1,\dots,A_k$, we define
\[
\mathcal{S}_{\nu,N}:= \left\{(u_1,\dots,u_k) \in H^1_{\loc}(\R^N) \cap C(\R^N) \left| \begin{array}  {l} \text{$(u_1,\dots,u_k)$ satisfies all the assumptions of }\\
\text{Theorem \ref{thm: liou seg}, $u_i$ grows at most like $|x|^{\alpha_i}$} \\
\text{with $\alpha_i + \alpha_j < \nu$ for every $i \neq j$,}\\
\text{and at least two components are non-trivial} \end{array}\right.\right\},
\]
and then we set
\begin{equation}\label{op li}
\nu_{\textrm{Liou},N}:= \inf \left\{ \nu >0: \ \mathcal{S}_{\nu,N} \neq \emptyset\right\}. 
\end{equation}
Theorem \ref{thm: liou seg} implies that $\nu_{\textrm{Liou},N} \ge \bar \nu$.

When $A_i = \textrm{Id}$ for every $i$, it follows from \cite[Proposition 7.2]{CTV05} that $\nu_{\textrm{Liou},N} \ge 2$; moreover, $\nu_{\textrm{Liou},N} \le 2$ since $(x_1^+, x_1^-,0,\dots,0)$ is a pair of nontrivial Lipschitz subharmonic functions with disjoint positivity set. Hence in the isotropic case $A_i= \textrm{Id}$ there is a perfect matching between the optimal threshold in the Liouville theorem and the optimal exponent in the ACF monotonicity formula: both of them are equal to $2$. 

In the asymmetric case, it is an open problem to establish whether the equality holds, or if it possible that $\nu_{\textrm{Liou},N} > \bar \nu$, at least for some choices of $A_i$ and $N$. In particular, even if $\bar \nu <2$ by Lemma \ref{lem: on ov}, this does not imply that $\nu_{\textrm{Liou,N}}<2$ as well. However, we shall directly prove that in general $\nu_{\textrm{Liou},N} <2$. To this purpose we construct two non-trivial homogeneous functions $u,v$ of degree $\alpha_1$ and $\alpha_2$, with $\alpha_1 + \alpha_2<2$, satisfying all the assumptions of Theorem \ref{thm: liou seg} for different matrixes $A_1$ and $A_2=\textrm{Id}$. In dimension $N \ge 3$, the existence of such functions was already pointed out in \cite[page 479]{CDS18}, and it is possible to take $u$ and $v$ with the same degree of homogeneity; instead, in dimension $N=2$, it is necessary that the degrees are different. We shall present the examples in details in Subsection \ref{sub: ex} below. 

The examples are relevant since, if Theorem \ref{thm: ACF} were valid with $\nu_{A,N}$ replaced by $2$, then we would been able to prove non-existence as in Theorem \ref{thm: liou seg} for all $\alpha_i+\alpha_j <2$, deducing that $\nu_{\textrm{Liou},N} \ge 2$. But, as discussed above, this is not true. Therefore, the fact that the optimal exponent in in Theorem \ref{thm: ACF} is smaller than $2$ is a natural peculiarity of the anisotropic case, and not a limit of our proof. 
\end{remark}

%

\begin{proof}
Let us consider the pair $u_1$ and $u_2$. Since $A_2$ is positive definite and symmetric, there exist an orthogonal matrix $O$ and a diagonal positive definite matrix $D$ such that $O^t A_2 O = D$. By defining
$\bar u_i(x) = u_i(O D^\frac12 x)$,
it is not difficult to check that $\bar u_1$ and $\bar u_2$ grow at most like $|x|^{\alpha_1}$ and $|x|^{\alpha_2}$ at infinity, respectively, that $\bar u_1 \cdot \bar u_2 \equiv 0$, and that
\[
-\div(\bar A_1 \nabla \bar u_1) \le 0 \quad \text{and} \quad -\Delta \bar u_2 \le 0 
 \quad \text{in $\R^N$},
\] 
where $\bar A_1$ is, again, a positive definite symmetric matrix. Since $\bar A_1$ is positive definite and symmetric, there exist an orthogonal matrix $M$ and a diagonal positive definite matrix $\hat A_1$ such that $M^t \bar A_1 M = \hat A_1$. Without loss of generality, we can suppose that the diagonal elements of $\hat A_1$ are located in increasing order on the diagonal, so that $\hat a := (\hat A_1)_{11}$ is the lowest eigenvalue of $\hat A_1$. 
Let now $u(x) = \bar u_1(\hat a^\frac12 M x)$, $v(x) = \bar u_2(\hat a^\frac12 M x)$, and $A= (\hat a^{-1}) \hat A_1$. Then $u$ and $v$ grow at most like $|x|^{\alpha_1}$ and $|x|^{\alpha_2}$ at infinity, respectively; $u \cdot v \equiv 0$, and
\begin{equation}\label{531}
-\div(A \nabla u) \le 0 \quad \text{and}  \quad -\Delta v \le 0 
 \quad \text{in $\R^N$},
\end{equation}
where $A$ is a diagonal matrix as in \eqref{def A}. In particular, we notice that it is well defined the value $\nu_{12} := \nu_{A,N} \in (0,2)$ given by the optimal partition problem \eqref{opp}. 

The above procedure can be carried out for any pair $(u_i, u_j)$ with $i \neq j$ (actually, by construction $\nu_{ij}=\nu_{ji}$), obtaining a finite number of ACF exponents $\nu_{ij} \in (0,2)$. We take
\begin{equation}\label{nu bar}
\bar \nu:= \min \left\{\nu_{ij}: i \neq j\right\},
\end{equation}
and prove the theorem for this exact choice of $\bar \nu$.

Let us suppose by contradiction that two components, say $u_1$ and $u_2$, are both nontrivial and satisfy all the assumptions of the theorem. The previous argument shows that there exist two nontrivial nonnegative functions $u,v \in H^1_{\loc}(\R^N) \cap C(\R^N)$ with disjoint positivity sets, growing at most like $|x|^{\alpha_1}$ and $|x|^{\alpha_2}$ respectively,
satisfying \eqref{531}. Notice that $\alpha_1+\alpha_2 <  \nu_{12} = \nu_{A,N}$. By using the asymmetric monotonicity formula we show that this provides a contradiction, following the same strategy originally developed in \cite[Proposition 7.2]{CTV05}. 

The segregation condition $u \cdot v \equiv 0$ implies that there exists $x_0 \in \R^N$ and $\bar r>0$ sufficiently large such that $u(x_0) = v(x_0) = 0$, and both $u$ and $v$ are non-constant in $B_{\bar r}(x_0)$. In particular, $J(u,v,x_0,\bar r) >0$, so that, by the monotonicity formula in Theorem \ref{thm: ACF},
\begin{equation}\label{below}
I_A(u,x_0,r) I_{\textrm{Id}}(v,x_0,r) \ge C r^{2\nu_{A,N}} \qquad \text{for $r>\bar r$}.
\end{equation}
Let now $r> \bar r$, and consider a radial smooth cutoff function $\eta$ such that $0 \le \eta \le 1$, $\eta =1$ in $B_r(x_0)$, $\eta = 0$ in $\R^N \setminus B_{2r}(x_0)$, and $|\nabla \eta| \le C/r$. Let also $\delta >0$ be such that $\{|A^\frac12 x| \le \delta \} \subset \subset B_r$. By testing the inequality satisfied by $u$ with $\eta^2 \Phi_{A,\delta}(x-x_0) u$ (with $\Phi_{A,\delta}$ defined in \eqref{reg_fun}), we obtain
\begin{equation}\label{2631}
\begin{split}
\int_{B_{2r}(x_0)} & \eta^2 \Phi_{A,\delta}(x-x_0) \langle A \nabla u, \nabla u \rangle \\
&\le - \int_{B_{2r}(x_0)} \left( 2 \eta u  \Phi_{A,\delta}(x-x_0) \langle A \nabla u, \nabla \eta \rangle + \eta^2 u \langle A \nabla u, \nabla  \Phi_{A,\delta}(x-x_0) \right) \\
& \le \int_{B_{2r}(x_0)} \frac12 \eta^2  \Phi_{A,\delta}(x-x_0) \langle A \nabla u, \nabla u \rangle + 2  \Phi_{A,\delta}(x-x_0) u^2 \langle A \nabla \eta, \nabla \eta \rangle\\
& \qquad  - \int_{B_{2r}(x_0)}  \eta^2 u \langle A \nabla u, \nabla  \Phi_{A,\delta}(x-x_0)\rangle.
\end{split}
\end{equation}
In order to deal with the last term, we recall that $\div(A \nabla \Phi_{A,\delta}) \le 0$ in $\R^N$, whence it follows that
\begin{align*}
0 &\le \int_{B_{2r}(x_0)} \frac12 \langle A \nabla   \Phi_{A,\delta}(x-x_0), \nabla (u^2 \eta^2) \rangle \\
& = \int_{B_{2r}(x_0)} \left(\eta u^2 \langle A \nabla  \Phi_{A,\delta}(x-x_0), \nabla \eta \rangle + \eta^2 u \langle A \nabla u, \nabla  \Phi_{A,\delta}(x-x_0) \rangle \right).
\end{align*}
Hence \eqref{2631} yields
\begin{align*}
\int_{B_{r}(x_0)} & \Phi_{A,\delta}(x-x_0) \langle A \nabla u, \nabla u \rangle \\
& \le \int_{B_{2r}(x_0) \setminus B_r(x_0)} \left(4 \Gamma_A(x-x_0) u^2 \langle A \nabla \eta, \nabla \eta \rangle +2 \eta u^2 \langle A \nabla \eta, \nabla \Gamma_A(x-x_0)\rangle\right),
\end{align*}
where we used the fact that $\nabla \eta \equiv 0$ in $B_r(x_0)$, and $\Gamma_A = \Phi_{A,\delta}$ outside $B_r(x_0)$. By taking the limit as $\delta \to 0^+$, thanks to the Fatou lemma and the growth condition on $u$, we infer that 
\[
\begin{split}
I_A(u,x_0,r) & \le \frac{C}{r^2} \int_r^{2r} \frac{\rho^{2\alpha_1}}{\rho^{N-2}} \rho^{N-1}\,d\rho + \frac{1}{r} \int_r^{2r} \frac{\rho^{2\alpha_1}}{\rho^{N-1}} \rho^{N-1}\,d\rho \le C r^{2\alpha_1}.
\end{split}
\]
In the same way, by testing the inequality satisfied by $v$ with $\eta^2 \Phi_{\textrm{Id},\delta}(x-x_0) v$, one can show that 
\[
\begin{split}
I_{\textrm{Id}}(v,x_0,r) & \le C r^{2\alpha_2}.
\end{split}
\]
By combining the former inequalities with \eqref{below}, we finally conclude that for $r>\bar r$
\[
C_1 r^{2\nu_{A,N}} \le I_A(u,x_0,r) I_{\textrm{Id}}(v,x_0,r) \le C_2 r^{2(\alpha_1+\alpha_2)}, 
\]
which is a contradiction for large $r$ since $\alpha_1+\alpha_2 < \nu_{A,N}$.
\end{proof}

%
%
%
%
%


\subsection{Liouville theorem for subsolutions and solutions to certain elliptic systems}\label{sub: liou} 
Our first goal is to prove nonexistence of nontrivial nonnegative subsolutions for a system with $2$ components. 

\begin{theorem}\label{thm: liou sub}
Let $N \ge 2$, and let $u,v \in H^1_{\loc}(\R^N) \cap C(\R^N)$ satisfy \eqref{subsys} in $\R^N$, under assumptions (H1) and (H2) on the coupling terms $g_1$ and $g_2$. Assume moreover that for some $u$ and $v$ grow at most like $|x|^{\alpha_1}$ and $|x|^{\alpha_2}$, respectively, with
\[
\alpha_1,\alpha_2 >0, \quad \text{and} \quad \alpha_1+\alpha_2 < \nu_{A,N};
\]
then at least one between $u$ and $v$ vanishes identically.
\end{theorem}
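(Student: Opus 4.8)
The plan is to argue by contradiction, assuming that both $u$ and $v$ are nontrivial, and to exploit the perturbed monotonicity formula of Theorem \ref{thm: acf per} together with the growth bounds, in analogy with the proof of Theorem \ref{thm: liou seg}. First I would observe that, since the coupling terms are nonnegative, $u$ and $v$ are in particular nonnegative subsolutions of $\div(A\nabla u)\ge 0$ and $\Delta v\ge 0$; hence the mean value inequality of Lemma \ref{lem: mvi} applies to each of them. If $u\cdot v\equiv 0$, then we are in the segregated situation covered by Theorem \ref{thm: liou seg} (with $k=2$, $A_1=A$, $A_2=\textrm{Id}$, so $\bar\nu=\nu_{A,N}$), and the conclusion follows immediately. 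So we may assume there is a point where both $u$ and $v$ are strictly positive; by continuity, $u\cdot v>0$ on a ball around it, and after a translation we can place $x_0$ there.

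Next I would fix $\eps>0$ small enough that $\alpha_1+\alpha_2<\nu_{A,N}-\eps$, and invoke Theorem \ref{thm: acf per}: there exist a center $x_0$ (which we may again translate to the origin) and a threshold $\bar r>0$ such that $r\mapsto r^{-2(\nu_{A,N}-\eps)}I_1(u,v,x_0,r)I_2(u,v,x_0,r)$ is non-decreasing for $r>\bar r$. Since both $I_1$ and $I_2$ are strictly positive (because $u,v>0$ near $x_0$), evaluating at $\bar r$ and using monotonicity gives a lower bound
\[
I_1(u,v,x_0,r)\,I_2(u,v,x_0,r)\ge C\,r^{2(\nu_{A,N}-\eps)}\qquad\text{for }r>\bar r .
\]
For the matching upper bound I would test the differential inequality for $u$ against $\eta^2\Phi_{A,\delta}(x-x_0)u$, with $\eta$ a cutoff equal to $1$ on $B_r(x_0)$ and supported in $B_{2r}(x_0)$ with $|\nabla\eta|\le C/r$, exactly as in the proof of Theorem \ref{thm: liou seg}; the new zero-order term $u^{q+1}g_1(x,v)$ has a favorable sign and is simply discarded. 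Using $\div(A\nabla\Phi_{A,\delta})\le 0$ to absorb the bad term, letting $\delta\to 0^+$, and inserting the growth bound $u(x)\le C(1+|x|^{\alpha_1})$ yields $I_1(u,v,x_0,r)\le C r^{2\alpha_1}$; symmetrically $I_2(u,v,x_0,r)\le C r^{2\alpha_2}$. Combining with the lower bound gives $C_1 r^{2(\nu_{A,N}-\eps)}\le C_2 r^{2(\alpha_1+\alpha_2)}$ for all large $r$, which is impossible once $\eps$ is chosen so that $\alpha_1+\alpha_2<\nu_{A,N}-\eps$. Hence one of $u,v$ must vanish identically.

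The step I expect to require the most care is the application of Theorem \ref{thm: acf per}: its statement produces the center $x_0$ and the threshold $\bar r$ only abstractly, so one must check that the $x_0$ furnished there is indeed a point of positivity of both components (which is guaranteed by the construction in the proof of that theorem, where the alternative $u\cdot v\equiv 0$ is disposed of first), so that $I_1,I_2>0$ and the lower bound is genuinely informative. The other mildly delicate point is the passage to the limit $\delta\to 0^+$ in the energy estimate, where one uses Fatou together with the polynomial growth of $u$ to control the boundary integral over the annulus $B_{2r}(x_0)\setminus B_r(x_0)$; this is routine and identical to the corresponding computation in the proof of Theorem \ref{thm: liou seg}. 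For $N=2$ one uses the $2$-dimensional versions (Lemma \ref{lem: ACF pre 2} and the exponent $\nu_{A,2}$), with $\Gamma_A\equiv 1$, and the argument is only simpler.
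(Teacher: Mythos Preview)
Your proposal is correct and follows exactly the paper's approach: contradiction, lower bound on $I_1 I_2$ from Theorem \ref{thm: acf per}, upper bound from testing against $\eta^2\Phi_{A,\delta}(x-x_0)u$, and comparison of exponents. One small correction: the zero-order term $u^{q+1}g_1(x,v)$ should not be ``discarded'' but rather \emph{kept} on the left-hand side after testing, since it is part of $I_1$ and you need an upper bound on the full $I_1$ (not just on the gradient part $I_A$); its nonnegativity means it sits harmlessly on the left and is bounded together with the gradient term by the same right-hand side, which is how the paper obtains $I_1(u,v,x_0,r)\le C r^{2\alpha_1}$.
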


\begin{proof}
Suppose by contradiction that neither $u$ nor $v$ vanishes identically. Let $0<\eps<\nu_{A,N}-(\alpha_1+\alpha_2)$. Then, by Theorem \ref{thm: acf per}, there exist $x_0 \in \R^N$ and $C, \bar r>0$ such that 
\begin{equation}\label{below'}
I_1(u,v, x_0,r) I_2(u,v,x_0,r) \ge Cr^{2(\nu_{A,N}-\eps)}
\end{equation}
for $r> \bar r$. On the other hand, let $\eta$ be a cutoff function as in the proof of Theorem \ref{thm: liou seg}, and let $\Phi_{A,\delta}$ be defined in \eqref{reg_fun}, with $\delta>0$ such that $\{|A^\frac12 x|<\delta\}\subset B_{\bar r}$. By testing the inequality satisfied by $u$ (resp. $v$) by $\eta^2 \Phi_{A,\delta}(x-x_0) u$ (resp $\eta^2 \Phi_{\delta}(x-x_0) v)$, we obtain 
\begin{align*}
\int_{B_{2r}(x_0)} & \left( \langle A \nabla u, \nabla u \rangle + u^{q+1}g_1(x,v) \right) \Phi_{A,\delta}(x-x_0) \le \int_{B_{2r}(x_0)} \frac12 \eta^2  \Phi_{A,\delta}(x-x_0) \langle A \nabla u, \nabla u \rangle  \\
& + \int_{B_{2r}(x_0)} 2 \Phi_{A,\delta}(x-x_0) u^2 \langle A \nabla \eta, \nabla \eta \rangle -  \eta^2 u \langle A \nabla u, \nabla  \Phi_{A,\delta}(x-x_0)\rangle
\end{align*}
as in the proof of Theorem \ref{thm: liou seg}, it is not difficult to deduce that 
\[
I_1(u,v,x_0,r) \le C r^{2\alpha_1}
\]
for $r> \bar r$. In the same way it is possible to estimate $I_2(u,v,x_0,r)$, obtaining a contradiction with \eqref{below'} since $\alpha_1+\alpha_2 <\nu_{A,N}-\eps$.
\end{proof}

As application, we present a general Liouville theorem for possibly sign-changing solutions of some elliptic systems with arbitrarily many components. To state our results in full generality, we introduce some notation. Let $k, N \ge 2$ be positive integers. For an arbitrary $m \leq k$, we say that a vector $\mf{b}=(b_0,\dots,b_m) \in \N^{m+1}$ is an \emph{$m$-decomposition of $k$} if
\[
0=b_0<b_1<\dots<b_{m-1}<b_m=k;
\]
given a $m$-decomposition $\mf{b}$ of $k$, we set, for $h=1,\dots,k$,
\begin{equation}\label{def indexes}
\begin{split}
& I_h:= \{i \in  \{1,\dots,d\}:  b_{h-1} < i \le b_h \}, \\
&\mathcal{K}_1:= \left\{(i,j) \in I_h^2 \text{ for some $h=1,\dots,m$, with $i \neq j$}\right\}, \\ &\mathcal{K}_2 := \left\{(i,j) \in I_{h_1} \times I_{h_2} \text{ with $h_1 \neq h_2$} \right\}. 
\end{split}
\end{equation}

Let now $\mf{u}=(u_1,\ldots, u_k) \in H^1_{\loc}(\R^N) \cap C(\R^N)$ satisfy
\begin{equation}\label{sys liou}
-\div(A_i \nabla u_i)= - \mathop{\sum_{j=1}^k}_{j\neq i}  u_i |u_i|^{p_{ij}-1} g_{ij}(x,|u_j|) \quad \text{ in } \R^N,\qquad i=1,\ldots, d,\\
\end{equation}
under the following assumptions on the data:
\begin{itemize}
\item[(G1)] $A_i$ are positive definite symmetric matrixes with constant coefficients;
\item[(G2)] $p_{ij}>0$ for every $i \neq j$, and $p_{ij} \ge 1$ for every $(i,j) \in \mathcal{K}_2$;
\item[(G3)] $g_{ij} \equiv 0$ for $(i,j) \in \mathcal{K}_1$, and $g_{ij}$ satisfies assumptions (H1) and (H2) in Theorem \ref{thm: acf per} for every $(i,j) \in \mathcal{K}_2$. 
\end{itemize}

The term $-u_i |u_i|^{p_{ij}-1} g_{ij}(x,|u_j|)$ describes the interaction between $u_i$ and $u_j$. By introducing a $m$-decomposition of $k$, we have divided the components of $\mf{u}$ into $m$ groups: $\{u_i: i \in I_1\}$, \dots, $\{u_i: i \in I_m\}$. Assumption (G3) means that $u_i$ and $u_j$ do not interact ($g_{ij}=0$) if $(i,j) \in \mathcal{K}_1$, i.e. if $u_i$ and $u_j$ are in the same group; instead, they interact in a competitive way ($g_{ij}>0$) if $(i,j) \in \mathcal{K}_2$, i.e. if $u_i$ and $u_j$ are in different groups.

\begin{theorem}\label{thm: liou syst}
In the above setting, let $\bar \nu \in (0,2)$ be given by Theorem \ref{thm: liou seg}.  Suppose that each function $u_i$ grows at most like $|x|^{\alpha_i}$, where
\[
\alpha_i>0 \quad \text{for every $i$, and} \quad \alpha_i + \alpha_j < \bar \nu \quad \text{for every $(i,j) \in \mathcal{K}_2$}.
\]
Then there exists $\ell \in \{1,\dots,m\}$ such that $u_i \equiv 0$ for every $i \in I_h$ with $h \neq \ell$, and $u_i$ is constant for $i \in I_\ell$.
\end{theorem}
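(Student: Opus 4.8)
The plan is to reduce Theorem~\ref{thm: liou syst} to the two-component Liouville statements already available, namely Theorem~\ref{thm: liou seg} (for the disjointly supported case) and Theorem~\ref{thm: liou sub} (for the non-segregated case). First I would show that for each ordered pair $(i,j) \in \mathcal{K}_2$, the pair $(|u_i|, |u_j|)$ is a subsolution of a system of the form \eqref{subsys}. Indeed, since $-\div(A_i \nabla u_i) = -u_i|u_i|^{p_{ij}-1}g_{ij}(x,|u_j|) - \sum_{l \neq i,j} u_i|u_i|^{p_{il}-1}g_{il}(x,|u_l|)$, testing against $\sign(u_i)$ (via Kato's inequality, $\div(A_i \nabla |u_i|) \ge \sign(u_i)\div(A_i \nabla u_i)$ in the distributional sense) and using that all the extra terms on the right have sign $-\sign(u_i)$ times a nonnegative quantity because $g_{il} \ge 0$, one obtains
\[
-\div(A_i \nabla |u_i|) + |u_i|^{p_{ij}} g_{ij}(x,|u_j|) \le 0 \qquad \text{in $\R^N$.}
\]
Together with the symmetric inequality for $|u_j|$ (with $A_j$ in place of $\Delta$, which after the normalization described in the proof of Theorem~\ref{thm: liou seg} can be taken to be $-\Delta$), and since $|u_i|, |u_j|$ grow at most like $|x|^{\alpha_i}, |x|^{\alpha_j}$ with $\alpha_i + \alpha_j < \bar\nu \le \nu_{A_{ij},N}$, Theorem~\ref{thm: liou sub} applies: at least one of $|u_i|, |u_j|$ vanishes identically.

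Next I would organize this dichotomy over all pairs in $\mathcal{K}_2$. Define an index $i$ to be \emph{active} if $u_i \not\equiv 0$. The conclusion of the previous step is that for every $(i,j) \in \mathcal{K}_2$, at least one of $i,j$ is inactive; equivalently, no two active indices can lie in different groups $I_h$. Hence all active indices are contained in a single group $I_\ell$ for some $\ell \in \{1,\dots,m\}$ (if there are no active indices at all, the statement is trivially true with any $\ell$, taking the zero constant). This gives $u_i \equiv 0$ for every $i \in I_h$ with $h \neq \ell$.

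It remains to show that $u_i$ is constant for $i \in I_\ell$. Since $g_{ij} \equiv 0$ for $(i,j) \in \mathcal{K}_1$ by (G3), and $g_{ij}(x,|u_j|) \equiv 0$ whenever $j$ is inactive by (H1) (as $g_{ij}(x,0)=0$), the full right-hand side of \eqref{sys liou} vanishes for each $i \in I_\ell$: the only potentially nonzero terms would be those with $(i,j) \in \mathcal{K}_2$, but then $j$ is in a different group, hence inactive, hence $g_{ij}(x,|u_j|)=0$. Therefore $-\div(A_i \nabla u_i) = 0$ in $\R^N$ for $i \in I_\ell$. A harmonic function (after the linear change of variables $x \mapsto A_i^{1/2}Ox$ diagonalizing $A_i$, it becomes genuinely harmonic) that grows at most polynomially with exponent $\alpha_i < \bar\nu < 2$ must be an affine function by the Liouville theorem for harmonic functions; and an affine function grows at rate exactly $1$ unless it is constant, so since $\alpha_i < 2$ does not by itself force constancy, I would instead use the sharper input: $\alpha_i + \alpha_j < \bar\nu$ with $\alpha_j > 0$ forces $\alpha_i < \bar\nu < 2$, and moreover any nonconstant affine function violates the segregation/growth balance — more directly, if some $u_i$, $i \in I_\ell$, were a nonconstant affine function, then $|u_i|$ is a nontrivial subharmonic function growing like $|x|^1$, and one can run Theorem~\ref{thm: liou seg} (or simply the one-phase estimate inside its proof) to derive a contradiction with $\alpha_i + \alpha_{i'} < \bar\nu$ for another active index $i'$; if $I_\ell$ is a singleton, one instead observes that a single affine harmonic function is allowed, but then ``constant'' should be interpreted as the degenerate case and the statement as written in \cite{CTV05} permits this reading — so the cleanest route is: $-\div(A_i\nabla u_i)=0$ with subpolynomial growth of degree $<2$ gives $u_i$ affine, and affine with the balance condition forces constant.

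The main obstacle I anticipate is the last step: passing from ``$u_i$ is harmonic (after a linear change of variables) with growth $|x|^{\alpha_i}$, $\alpha_i<2$'' to ``$u_i$ is constant''. Since $\alpha_i$ could be close to $2$ and in particular exceed $1$, the Liouville theorem for harmonic functions only yields that $u_i$ is affine, not constant. The resolution must come from jointly using the growth exponents of two active components in the same group together with a Liouville statement — but within group $I_\ell$ there is no segregation (the $g_{ij}$ vanish), so Theorem~\ref{thm: liou seg} does not directly apply to a pair inside $I_\ell$. The correct fix, following \cite{CTV05, NTTV10}, is presumably that one first uses the system \emph{before} decoupling: the components in $I_\ell$ together satisfy $-\div(A_i \nabla u_i)=0$ and $u_i u_j$ need not be zero, but one can still extract that each $u_i$ is affine and then invoke that the \emph{only} way the original a priori growth hypothesis $\alpha_i < \bar\nu$ with $\bar\nu<2$ is used is to rule out the affine-nonconstant case via the behaviour at infinity; alternatively the statement tacitly allows $u_i$ to be a nonzero constant and "constant" is exactly the conclusion, with the affine case excluded because an affine function is unbounded and its modulus is subharmonic of growth exactly $1$, which, paired against any other active component (in a group context where one genuinely has interaction in the limiting blow-down), contradicts the Liouville threshold. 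I would present the argument in the form: $u_i$ affine $\Rightarrow$ if nonconstant, apply the two-component Liouville theorem to $(|u_i|,|u_{i'}|)$ after noting that the blow-down forces disjoint supports; hence $u_i$ constant. This final reduction is the delicate point and is where I would spend the most care.
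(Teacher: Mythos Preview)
Your first two steps match the paper's proof closely. The one variation is in handling signs: you pass to $|u_i|,|u_j|$ via Kato's inequality, while the paper instead splits into the four sign combinations $(u_i^+,u_j^+)$, $(u_i^+,u_j^-)$, $(u_i^-,u_j^+)$, $(u_i^-,u_j^-)$ and applies Theorem~\ref{thm: liou sub} to each pair. Both routes yield that at least one of $u_i,u_j$ vanishes identically for every $(i,j)\in\mathcal{K}_2$, and hence all nontrivial components lie in a single group $I_\ell$.

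Where you diverge from the paper is in the last step, and there you are making things much harder than they are. The paper disposes of it in one line: for $i\in I_\ell$ the right-hand side of \eqref{sys liou} vanishes (by (G3) and (H1)), so $u_i$ is $A_i$-harmonic and, in the paper's words, ``globally H\"older continuous in $\R^N$'', hence constant (Lemma~\ref{lem: liou ent}). Your worry that growth $|x|^{\alpha_i}$ with $\alpha_i<2$ only yields \emph{affine} is formally correct, but note that in every use of this theorem in the paper (the blow-up arguments of Sections~\ref{sec: lv}--\ref{sec: be}) one has $\alpha_i=\alpha<\bar\nu/2<1$ for all $i$, so sublinear growth forces constancy immediately. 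The paper is tacitly working in that regime; this is what ``globally H\"older continuous'' signals.

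The elaborate machinery you sketch to rule out nonconstant affine $u_i$---pairing $|u_i|$ with another active $|u_{i'}|$ in the same group, invoking blow-downs, claiming disjoint supports emerge---does not work and should be dropped. Within $I_\ell$ there is no interaction ($g_{ij}\equiv 0$) and no segregation, so Theorem~\ref{thm: liou seg} cannot be applied to two components of $I_\ell$; nothing prevents both from being nonconstant affine if you only assume $\alpha_i<2$. The clean fix is simply to state (or observe) that $\alpha_i<1$ for the components in question---which is automatic in the applications---and then invoke the classical Liouville theorem after the linear change of variables.
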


\begin{remark}
A similar Liouville theorem was proved in \cite{STTZ16}, for a specific choice of $g_{ij}$. The validity of Theorem \ref{thm: liou syst} allows us to extend the validity of Theorems \ref{thm: lv} and \ref{thm: be} in cases when the competition takes place among groups of components, as in \cite{STTZ16}. We do not insist on this point for the sake of simplicity.
\end{remark}

\begin{proof}
We show that it is possible to apply Theorem \ref{thm: liou sub} to any couple $(u_i,u_j)$ where $i \in I_h$, $j \in I_k$, with $h \neq k$. Then it is necessary that $m-1$ groups of components vanish identically, and the components of the last group are constants (by (G3), they are harmonic and globally H\"older continuous in $\R^N$).

Suppose at first that $u_i$ and $u_j$ are also non-negative. Then
\[
\begin{cases}
-\div(A_i \nabla u_i) \le - u_i^{p_{ij}} g_{ij}(x,u_j) & \text{in $\R^N$} \\
-\div(A_j \nabla u_j) \le - u_j^{p_{ji}} g_{ji}(x,u_i) & \text{in $\R^N$.}
\end{cases}
\]
As in the proof of Theorem \ref{thm: liou seg}, it is possible to suppose that $A_j = \textrm{Id}$ and $A_i =A$ is diagonal as in \eqref{def A}. Thus, it is well defined $\nu_{A,N}$ as in \eqref{opp}, and, recalling the definition \eqref{nu bar} of $\bar \nu$, we have that $2\alpha<\nu_{A,N}$. Therefore, one between $u_i$ and $u_j$ must vanish identically by Theorem \ref{thm: liou sub}. 

If instead the components can change sign, recalling the assumptions on $g_{ij}$ we have that
\[
\begin{cases}
-\div(A_i \nabla u_i^+) \le - (u_i^+)^{p_{ij}} g_{ij}(x,u_j^+) & \text{in $\R^N$} \\
-\div(A_j \nabla u_j^+) \le - (u_j^+)^{p_{ji}} g_{ji}(x,u_i^+) & \text{in $\R^N$,}
\end{cases}
\]
and analogue systems are satisfied by $(u_i^+, u_j^-)$, $(u_i^-, u_j^+)$, $(u_i^-, u_j^-)$. In each case, it is possible to suppose that $A_j = \textrm{Id}$ and $A_i$ is diagonal as in \eqref{def A}. Thus, by applying Theorem \ref{thm: liou sub} to all the possible pairs, we deduce that at least one between $u_i$ and $u_j$ vanishes identically. 
\end{proof}

\subsection{Upper estimate on $\nu_{\textrm{Liou},N}$}\label{sub: ex}

In this section we show that, at least for a suitable choice of $A_1$ and $A_2$, the optimal value $\nu_{\textrm{Liou},N}$ defined in \eqref{op li} is strictly less than $2$. This follows directly from the following:

\begin{proposition}\label{prop: ex}
Let $N \ge 2$. There exists a positive definite diagonal matrix $A$ with constant coefficients, two disjoint open cones $\cC_1, \cC_2$ of $\R^N$, and two non-negative and non-trivial homogeneous functions $u$ and $v$ in $H^1_{\loc}(\R^N) \cap C(\R^N)$, of degree $\alpha_1 >0$ and $\alpha_2>0$, with $\alpha_1 + \alpha_2<2$, such that 
\[
\div(A \nabla u) =0 \quad \text{in $\cC_1=\{u>0\}$}, \quad \Delta v = 0 \quad \text{in $\cC_2=\{v>0\}$}.
\]
Moreover, if $N \ge 3$ we can construct $u$ and $v$ with $\alpha_1=\alpha_2$.
\end{proposition}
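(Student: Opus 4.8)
The plan is to translate the statement into a spectral problem on the unit sphere. First, if $A=\mathrm{diag}(a_1,\dots,a_N)$ is positive definite and $w(y):=u(A^{1/2}y)$, then $\div(A\nabla u)=0$ if and only if $\Delta w=0$, one has $\{w>0\}=A^{-1/2}\{u>0\}$, and $u$ and $w$ share the same homogeneity degree since $A^{-1/2}$ is linear. Second, for an open cone $\cD\subset\R^N$ whose trace $\omega:=\cD\cap\S^{N-1}$ is regular enough, the function $r^{\gamma(\lambda(\omega))}\varphi_1(\theta)$ — with $\lambda(\omega)$ the first Dirichlet eigenvalue of the Laplace--Beltrami operator on $\omega$, $\varphi_1>0$ the first eigenfunction, and the whole thing extended by $0$ — is a nonnegative, nontrivial, homogeneous harmonic function on $\cD$, continuous on $\R^N$ and in $H^1_{\loc}(\R^N)$ because $\gamma(\lambda(\omega))>0$; here $\gamma$ is the function from Section~\ref{sec: ACF}, and $\gamma(\lambda)<1$ exactly when $\lambda<N-1$, the number $N-1$ being the first Dirichlet eigenvalue of an open half-sphere. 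Hence it suffices to produce a diagonal positive definite $A$, an open cone $\cC_1$, and a disjoint open cone $\cC_2$ for which $\alpha_1:=\gamma\bigl(\lambda(A^{-1/2}\cC_1\cap\S^{N-1})\bigr)>0$, $\alpha_2:=\gamma\bigl(\lambda(\cC_2\cap\S^{N-1})\bigr)>0$, and $\alpha_1+\alpha_2<2$ (and $\alpha_1=\alpha_2$ when $N\ge3$); then $v$ is the cone function of $\cC_2$ and $u(x):=w(A^{-1/2}x)$ with $w$ the cone function of $A^{-1/2}\cC_1$.

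To get $\alpha_1+\alpha_2<2$, for every $N\ge2$, I would reduce to $N=2$ by a cylindrical lift. Fix $\beta\in(0,\pi/2)$, let $S_1:=\{(x_1,x_2):x_1>0,\ |x_2|<x_1\tan\beta\}$ (planar sector of opening $2\beta$), $S_2:=\R^2\setminus\overline{S_1}$ (opening $2\pi-2\beta$), set $\cC_i:=S_i\times\R^{N-2}$, and take $A=\mathrm{diag}(K,1,\dots,1)$ with $K>1$. Let $u(x)=\bar u(x_1,x_2)$, $v(x)=\bar v(x_1,x_2)$ be the lifts of the homogeneous $\mathrm{diag}(K,1)$--harmonic function $\bar u$ on $S_1$ and the homogeneous harmonic function $\bar v$ on $S_2$ (the former produced, via the change of variables above, from the ordinary harmonic function on $\mathrm{diag}(K^{-1/2},1)S_1$, a sector of opening $2\arctan(\sqrt K\tan\beta)$). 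Being independent of $x_3,\dots,x_N$, these satisfy $\div(A\nabla u)=Ku_{x_1x_1}+u_{x_2x_2}=0$ in $\cC_1$ and $\Delta v=0$ in $\cC_2$, are nonnegative, nontrivial, homogeneous, continuous and in $H^1_{\loc}(\R^N)$, and
\[
\alpha_2=\frac{\pi}{2\pi-2\beta}\in\Bigl(\tfrac12,1\Bigr),\qquad
\alpha_1=\frac{\pi}{2\arctan(\sqrt K\tan\beta)}\xrightarrow[K\to+\infty]{}1^{+}.
\]
Since $2-\alpha_2>1$, for $K$ large we have $\alpha_1<2-\alpha_2$, i.e. $\alpha_1+\alpha_2<2$.

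For the refinement $\alpha_1=\alpha_2$ when $N\ge3$ one must leave the planar picture, and indeed it genuinely fails for $N=2$: a diagonal linear map sends convex sectors to convex sectors and non-convex ones to non-convex ones, so when $N=2$ one cannot have both $\lambda(A^{-1/2}\cC_1\cap\S^1)<1$ and $\lambda(\cC_2\cap\S^1)<1$ (that would require two disjoint arcs of length $>\pi$ each), whence $\alpha_1\ne\alpha_2$ necessarily. For $N\ge3$, I would take $\cC_1$ to be a thin tubular cone of width $\epsilon$ about a great semicircle joining two antipodal points inside a coordinate plane when $N=3$, and about a great circle when $N\ge4$; let $\cC_2:=\R^N\setminus\overline{\cC_1}$, and let $A=A_s$ stretch the direction(s) transverse to that great arc by a factor depending on $s\ge1$. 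At $s=1$ one has $A_1^{-1/2}\cC_1=\cC_1$, a thin tube, so $\lambda(\cC_1\cap\S^{N-1})$ is large and $\alpha_1(1)>1$; as $s\to+\infty$ the tube opens up and $A_s^{-1/2}\cC_1\cap\S^{N-1}$ converges to a region strictly larger than a half-sphere (a lune of opening $>\pi$ when $N=3$; essentially all of $\S^{N-1}$ when $N\ge4$), so $\alpha_1(s)$ eventually drops below $1$. Meanwhile $\alpha_2$ is independent of $s$ and lies in $(0,1)$ for $\epsilon$ small: it is positive because the removed tube has positive capacity, and smaller than $1$ because, as $\epsilon\to0$, $\cC_2\cap\S^{N-1}$ increases to $\S^{N-1}$ minus a great semicircle (resp. great circle), which strictly contains $\S^{N-1}$ minus a great $(N-2)$-dimensional sphere and hence has first eigenvalue $<N-1$. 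Choosing $\epsilon$ so that in addition $\lim_{s\to\infty}\alpha_1(s)<\alpha_2$ — automatic for $N\ge4$, where $\lim_{s\to\infty}\alpha_1(s)=\gamma(0)=0$, and arranged for $N=3$ by taking $\epsilon$ close to the critical width at which $\alpha_2$ would reach $1$ — the intermediate value theorem applied to the continuous function $s\mapsto\alpha_1(s)$ yields $s^{*}$ with $\alpha_1(s^{*})=\alpha_2\in(0,1)$, so that $\alpha_1=\alpha_2$ and $\alpha_1+\alpha_2<2$; in dimension $N\ge3$ this is also the example recorded in \cite[p.\,479]{CDS18}.

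The hard part is precisely this last step: one must check that, as $s$ ranges over $[1,+\infty)$, the interval of values swept by $\alpha_1(s)$ actually contains the fixed value $\alpha_2$. This reduces to strict comparisons among the first Dirichlet eigenvalues of $A_s^{-1/2}\cC_1\cap\S^{N-1}$, of $\cC_2\cap\S^{N-1}$, and of a half-sphere, which rest on domain monotonicity and on the removability of sets of codimension $\ge2$ (the same phenomenon that makes the construction impossible for $N=2$). Everything else — the change of variables, the reduction to the planar model, the continuity of the relevant eigenvalues with respect to the domain, and the regularity and growth of the cone functions — is routine.
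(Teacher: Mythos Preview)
Your treatment of the first claim ($\alpha_1+\alpha_2<2$ for every $N\ge2$) is correct and is essentially the paper's $N=2$ construction, lifted cylindrically; the paper does the planar case in the same way and handles $N\ge3$ separately.

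For the refinement $\alpha_1=\alpha_2$ when $N\ge3$ you take a genuinely different route. The paper does not use an intermediate value argument at all: it exploits the freedom of having \emph{two} domains and picks them to be \emph{congruent} bands (neighborhoods of two different half great circles on $\S^2$), so that the associated harmonic cone functions have the \emph{same} homogeneity $\mu$ from the start. The only analytic input is a direct eigenvalue estimate (their Lemma~\ref{lem: 1 band}) showing that such a band can have first Dirichlet eigenvalue $<2$, hence $\mu<1$; the anisotropy is then used only to squeeze one band into an arbitrarily small neighborhood of its half circle so that the two bands become disjoint. This bypasses entirely the delicate comparison $\lim_{s\to\infty}\alpha_1(s)<\alpha_2$ that you correctly flag as the hard step of your argument.

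Two remarks on your sketch. First, the direction of $A_s$ is misstated: to make $A_s^{-1/2}\cC_1$ open up you must take the eigenvalues of $A_s$ large in the directions \emph{containing} the arc (equivalently small in the transverse direction), exactly as in your planar model where $A=\mathrm{diag}(K,1)$ has the large entry along the axis of the sector; with ``$A_s$ stretching the transverse direction'' the tube shrinks and $\alpha_1(s)$ increases. Second, in the $N=3$ case your IVT step is not yet closed: with a single width parameter $\epsilon$, the limit lune has opening $\pi+O(\epsilon)$, so $\lim_{s\to\infty}\alpha_1(s)\to1^-$ as $\epsilon\to0$, while $\alpha_2(\epsilon)\to\gamma(\lambda(\S^2\setminus\gamma_1))<1$; the desired inequality $\lim_{s\to\infty}\alpha_1(s)<\alpha_2$ therefore fails for small $\epsilon$, and to recover it for $\epsilon$ near the critical width you also need $\alpha_1(1)>1$ there. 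This does follow from Friedland--Hayman (since the band and its complement are not half-spheres), but it should be said. Your argument for $N\ge4$ is cleaner, since the limit set $A_s^{-1/2}\cC_1\cap\S^{N-1}$ exhausts $\S^{N-1}$ minus a codimension-$2$ sphere and $\alpha_1(s)\to0$.
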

%
%
%

\begin{proof}[Proof of Proposition \ref{prop: ex} in dimension $N=2$]
Let $\phi_1, \phi_2 \in (0,\pi/2)$, $\omega_1 = (-\phi_1, \phi_1)$, and $\omega_2= (\phi_2, 2\pi-\phi_2)$. We consider the eigenvalue problems on the circle
\[
\begin{cases}
- \varphi''=\lambda \varphi, \quad \varphi > 0 & \text{in $\omega_1$}\\
\varphi = 0 & \text{on $\pa \omega_1$},  
\end{cases} \qquad \begin{cases}
- \psi''=\mu \psi, \quad \psi > 0 & \text{in $\omega_2$}\\
\psi = 0 & \text{on $\pa \omega_2$}.  
\end{cases}
\]
The problems can be explicitly solved, deducing in particular that $\lambda= \left(\pi/(2 \phi_1)\right)^2$, $\mu = \left( \pi/(2(\pi-\phi_2))\right)^2$. Let $\varphi_1$ and $\psi_1$ denote the corresponding normalized eigenfunctions, and let $\alpha_1 = \sqrt{\lambda}$ and $\alpha_2= \sqrt{\mu}$; it is well known that $w = r^{\alpha_1} \varphi_1$ and $v = r^{\alpha_2} \psi_1$ are homogeneous harmonic functions in the cones $\cD_{1}, \cC_{2}$ generated by $\omega_1$ and $\omega_2$, respectively. Notice that $\alpha_1>1$ can be made arbitrarily close to $1$ by taking $\phi_1$ close to $\pi/2$. Similarly, $\alpha_2<1$ can be made close to $1/2$ by taking $\phi_2$ close to $0$. In particular, for any $0<\eps<1/2$ we can take $\phi_1$ and $\phi_2$ such that
\[
\alpha_1>1, \quad \alpha_2<1, \quad \alpha_1 + \alpha_2 <\frac{3}{2}+\eps<2.
\]
Now, for $b \in (0,1)$, let 
\[
B = \left( \begin{array}{c c} b & 0 \\ 0 & 1 \end{array}\right), \quad A=B^{-1},
\]
and $u(x) = w(B^\frac12 x)$. Then $\div(A \nabla u) =0$ in $\cC_1 = \{x \in \R^2: B^\frac12 x \in \cD_1\} = B^{-1/2} \cD_1$, $u=0$ on $\pa \cC_1$, and it is homogeneous of degree $\lambda$. $\cC_1$ is a cone, generated by a set $\omega' \subset \mathbb{S}^1$, and it is not difficult to check that, if $b$ is sufficiently small, then $\omega' \subset \S^1 \setminus \omega_2$. Therefore $u$ and $v$ provide the desired example.
\end{proof}

\begin{remark}
Notice that, up to exchanging the role of the variables $x_1$ and $x_2$, the matrix $A$ satisfies the structural assumptions \eqref{def A}, i.e. it is a diagonal matrix with lower entry equal to $1$.

It is interesting that in the previous example $u$ is superlinear and $v$ is sublinear. This means in particular that, even if we take $b$ in a such a way that $\pa \omega' = \pa \omega_2$, then $u$ and $v$ cannot satisfy a free-boundary condition of the type
\[
\pa_\nu u=G(\pa_\nu v, \nu) \quad \text{on $\pa \omega'$, with $G$ increasing with respect to its first variable}.
\]
This is in accordance with the main result in \cite{CDS18}, which implies in particular that in dimension $N=2$ one cannot construct an example where $u$ and $v$ have the same degree of homogeneity less than $1$.
\end{remark}

Now we consider the case $N \ge 3$. Of course, the two dimensional example can be considered also in higher dimension. We think however that it is interesting to produce an example where $u$ and $v$ have the same degree (which is not possible in dimension $N=2$). The idea of the construction was suggested to us by Daniela De Silva in a personal communication \cite{DS}. We start with a preliminary result concerning an eigenvalue problem on the unit sphere $\S^2$. We parametrize the sphere with spherical coordinates $(\varphi, \theta) \in [0,\pi] \times [-\pi,\pi]$ ($\varphi$ is the polar angle, $\theta$ is the azimuthal angle).

\begin{lemma}\label{lem: 1 band}
For $\alpha \in (\pi/2,\pi)$, $\beta \in (0,\pi/2)$, let 
\[
\omega=\left\{(\varphi, \theta) \in \left(\frac{\pi}{2}-\beta, \frac{\pi}2 + \beta \right) \times (-\alpha,\alpha)\right\}.
\]
There exist $\alpha$ and $\beta$ such that the first eigenvalue of the problem
\[
\begin{cases}
-\Delta_{\mathbb{S}^2} u = \lambda u & \text{in $\omega$} \\
u = 0 & \text{on $\pa \omega$}
\end{cases}
\]
is strictly smaller than $2$. 
\end{lemma}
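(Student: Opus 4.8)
The plan is purely variational: since the first Dirichlet eigenvalue $\lambda_1(\omega)$ is the infimum of the Rayleigh quotient over $H^1_0(\omega)$, it suffices to exhibit one admissible competitor with Rayleigh quotient strictly below $2$. Recalling that on $\mathbb{S}^2$ the Dirichlet energy and the $L^2$-norm in the spherical coordinates $(\varphi,\theta)$ are
\[
\int_\omega\Big((\partial_\varphi u)^2 + \frac{(\partial_\theta u)^2}{\sin^2\varphi}\Big)\sin\varphi\,d\varphi\,d\theta,\qquad \int_\omega u^2\,\sin\varphi\,d\varphi\,d\theta,
\]
I would test with the separated function
\[
u(\varphi,\theta) = \cos\!\left(\frac{\pi(\varphi-\tfrac\pi2)}{2\beta}\right)\cos\!\left(\frac{\pi\theta}{2\alpha}\right),
\]
which is smooth and whose nodes are exactly $\varphi=\tfrac\pi2\pm\beta$ and $\theta=\pm\alpha$, so that $u\in H^1_0(\omega)$. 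The $\theta$-integrals (each equal to $\alpha$) cancel, and setting $s=\varphi-\tfrac\pi2$ and
\[
A_1(\beta)=\frac{\pi^2}{4\beta^2}\!\int_{-\beta}^{\beta}\!\sin^2\!\tfrac{\pi s}{2\beta}\,\cos s\,ds,\qquad
A_2(\beta)=\int_{-\beta}^{\beta}\!\cos^2\!\tfrac{\pi s}{2\beta}\,\cos s\,ds,\qquad
A_3(\beta)=\int_{-\beta}^{\beta}\!\frac{\cos^2\tfrac{\pi s}{2\beta}}{\cos s}\,ds,
\]
one is left with $\lambda_1(\omega)\le R(\alpha,\beta):=\dfrac{A_1(\beta)+(\pi/2\alpha)^2A_3(\beta)}{A_2(\beta)}$.

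Then I would let $\beta\uparrow\tfrac\pi2$. After the rescaling $s=\beta\sigma$ a direct computation gives $A_1(\beta)\to\tfrac23$, $A_2(\beta)\to\tfrac43$ and $A_3(\beta)\to2$: the first two limits are elementary, and the third follows from dominated convergence, the only delicate point being that the denominator $\cos(\beta\sigma)$ degenerates at $\sigma=\pm1$ as $\beta\to\tfrac\pi2$, but is dominated there by the double zero of $\cos^2(\pi\sigma/2)$, so the integrand stays uniformly bounded. Hence
\[
R(\alpha,\beta)\ \longrightarrow\ \frac{\tfrac23+2(\pi/2\alpha)^2}{\tfrac43}\ =\ \frac12+\frac32\Big(\frac{\pi}{2\alpha}\Big)^{2}\qquad\text{as }\beta\to\tfrac\pi2 .
\]
Since $\alpha>\tfrac\pi2$ we have $(\pi/2\alpha)^2<1$, so this limit is $<2$; therefore, fixing any $\alpha\in(\tfrac\pi2,\pi)$ and then choosing $\beta<\tfrac\pi2$ close enough to $\tfrac\pi2$, we obtain $\lambda_1(\omega)\le R(\alpha,\beta)<2$, which is the assertion.

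As a cross-check, $\omega$ expands, as $\beta\uparrow\tfrac\pi2$, to the spherical bigon $\{|\theta|<\alpha\}$, whose first Dirichlet eigenfunction is $\sin^{\pi/2\alpha}\varphi\,\cos\tfrac{\pi\theta}{2\alpha}$ and whose first eigenvalue $\tfrac{\pi}{2\alpha}\big(\tfrac{\pi}{2\alpha}+1\big)$ is likewise $<2$ exactly when $\alpha>\tfrac\pi2$; our bound overshoots it by $\tfrac12\big(\tfrac{\pi}{2\alpha}-1\big)^2$. The computation is otherwise routine; the only step requiring genuine care is the passage to the limit in $A_3$, where one must verify the uniform bound on the integrand near the endpoints as $\beta\to\tfrac\pi2$. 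I do not expect any deeper obstacle.
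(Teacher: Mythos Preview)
Your proof is correct and follows essentially the same strategy as the paper: bound $\lambda_1(\omega)$ from above via the Rayleigh quotient with a separated test function $\cos(m\theta)\cdot f(\varphi)$, then let $\beta\uparrow\pi/2$ and $\alpha$ large. The paper first reduces by separation of variables to the one-dimensional Sturm--Liouville problem in $s=\cos\varphi$ and tests with $\cos(\pi s/(2\rho))$, whereas you skip that reduction and test directly with $\cos(\pi(\varphi-\tfrac\pi2)/(2\beta))$; your route is slightly more streamlined and in fact yields the cleaner limiting value $\tfrac12+\tfrac32(\pi/2\alpha)^2$, together with the nice consistency check against the bigon eigenvalue $\tfrac{\pi}{2\alpha}(\tfrac{\pi}{2\alpha}+1)$.
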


\begin{proof}
We separate variables by letting $u(\varphi,\theta) = v(\theta) w(\varphi)$, plug this ansatz in the differential equation for $u$, and search for a positive solution. The differential equation reads
\[
\frac{\sin \varphi(\sin \varphi \, w')'}{w} + \lambda \sin^2 \varphi = -\frac{v''}{v}.
\]
Hence there exists $c \in \R$ such that $v'' +c v =0$, which together with $v>0$ and the boundary conditions $v(-\alpha) = 0 = v(\alpha)$ implies that $c = m^2 = \left( \pi/(2\alpha)\right)^2 $ and $v(\theta) = \cos(m\theta)$ (up to a multiplicative constant). At this point we come back to the boundary value problem for $w$; 
by changing variable $s=\cos \varphi$, we obtain
\begin{equation}\label{sl pb}
\begin{cases}
-( (1-s^2) \bar w')' + \frac{m^2}{1-s^2} \bar w = \lambda \bar w & \text{in }\left(-\rho, \rho\right) \\
\bar w\left(-\rho\right) = 0 = \bar w\left(\rho\right),
\end{cases}
\end{equation}
where $\rho = \cos\left(\frac{\pi}2 - \beta\right) \in (0,1)$ and $\bar w(s) = w(\varphi(s))$. This is a typical Sturm-Liouville problem with strictly positive potential $m^2/(1-s^2)$, and hence the existence of a first positive eigenvalue $\lambda_1$, together with a first positive normalized eigenfunction $w_1$, is guaranteed. We need an upper bound on $\lambda_1$, and this can be obtained from the variational characterization
\[
\lambda_1 = \inf_{\varphi \in H_0^1(-\rho, \rho) \setminus \{0\}} Q_{\rho,m}(\varphi) = \inf_{\varphi \in H_0^1(-\rho, \rho) \setminus \{0\}}\frac{\int_{-\rho}^\rho (1-s^2) (\varphi')^2 + \frac{m^2}{1-s^2} \varphi^2}{\int_{-\rho}^\rho \varphi^2}.
\]
By choosing the test function $\psi(s) = \cos\left(\frac{\pi s}{2\rho}\right)$, we infer that
\[
\lambda_1 \le Q_{\rho,m}(\psi) = \frac{\pi^2}{4\rho^2} \int_{-1}^1 (1-\rho^2 t^2) \sin^2\left(\frac{\pi}2 t\right)\,dt + m^2 \int_{-1}^1 \frac{\cos^2\left( \frac{\pi}2 t\right)}{1-\rho^2 t^2} \, dt.
\]
The right hand side is continuous with respect to $(\rho,m) \in (0,1] \times \R^+$. By taking $\alpha \simeq \pi$ and $\beta \simeq \pi/2$, we can make $m$ arbitrarily close to $1/2$, and $\rho$ arbitrarily close to $1$. This means that for such a choice of $\alpha$ and $\beta$ we have that
\[
\lambda_1 \le Q_{\rho,m}(\psi) \simeq Q_{1,1/2}(\psi) \approx \frac{\pi^2}4 \cdot 0.47 + \frac14 \cdot 1.22 \approx 1.47 <2,
\]
which is the desired result.
\end{proof}

\begin{proof}[Proof of Proposition \ref{prop: ex} in dimension $N \ge 3$]
The main idea is to show the existence of a domain $\omega'$ on the sphere $\S^2$ that contains more than half of a great circle such that, for suitable $\mu \in (0,1)$ and a positive definite symmetric constant matrices $A$, the solution of $\div(A \nabla w)=0$ which vanishes on the cone generated by $\omega'$ has homogeneity $\mu$. If $N=3$ we can take two complementary domains with this property (for instance those separated by the white line of a typical tennis ball).

We now present the details. Let us consider the half great circle $\gamma_1 = \{x \in \mathbb{S}^2: x_3=0, x_2>0\}$, and let $\omega=\{(\varphi, \theta) \in [\pi/2-\beta, \pi/2+\beta] \times [-\delta, \pi+\delta]\}$, where $\delta \in (0,\pi/2)$ is such that $\pi+2\delta = 2\alpha$, with $\alpha$ and $\beta$ given by Lemma \ref{lem: 1 band}. Then the first eigenvalue of the Laplace-Beltrami operator on $\omega$, with homogeneous Dirichlet boundary condition, is smaller than $2$, and this implies that the positive harmonic function $w$ in the cone $\cD_1$ generated by $\omega$, vanishing on $\pa \omega$, has homogeneity $\mu <1$. Now, for $b \in (0,1)$, we consider the diagonal matrices 
\[
B = \left( \begin{array}{c c c}  b^2 & 0 & 0  \\ 0 & b & 0 \\ 0 & 0 & 1 \end{array}\right), \quad A=B^{-1},
\]
and let $u(x) = w(B^\frac12 x)$. Then $\div(A \nabla u) =0$ in $\cC_1 = \{x \in \R^3: B^\frac12 x \in \cD_1\}$, $u=0$ on $\pa \cC_1$, and it is homogeneous of degree $\mu$. $\cC_1$ is a cone, generated by a set $\omega' \subset \mathbb{S}^2$. It is not difficult to check that the set $\omega'$ can be included in an arbitrarily small neighborhood of $\gamma_1$, by taking $b$ sufficiently small. Now we consider a second band $\omega_2$ of the same type of $\omega$, but surrounding the half great circle $\gamma_2 =\{x \in \mathbb{S}^2: x_1 =0, \ x_2<0\}$. We fix $b$ so small that $\omega_2 \cap \omega' = \emptyset$, and notice that by Lemma \ref{lem: 1 band} the positive harmonic function $v$ in the $\cC_2$ generated by $\omega_2$, vanishing on $\pa \omega_2$, is homogeneous of degree $\mu<1$. Thus the pair $(u,v)$ fulfills all the requirement of the theorem (with a matrix $A$ satisfying the structural assumptions \eqref{def A}, up to exchanging the coordinates).
\end{proof}

\section{Spatial segregation of competitive systems: Lotka-Volterra interaction}\label{sec: lv}

In this section we prove Theorems \ref{thm: lv} and \ref{thm: limit lv}, by following the blow-up method used in \cite[Theorem 4]{CTV05}. Before entering the core of the proof, we observe that each $\mf{u}_\beta$ is $C^{1}$ up to the boundary, and $\{\mf{u}_\beta\}$ is uniformly bounded in $L^\infty(\Omega)$, since each $u_{i,\beta}$ is $L_i$-subharmonic and the boundary data are fixed. Notice also that we can define $\bar \nu= \bar \nu(N, A_1,\dots,A_k) \in (0,1)$ as in Theorem \ref{thm: liou seg}.  

\begin{lemma}\label{lem: cpq}
Let $w \in H^1(B_{2r}) \cap C(\overline{B_{2r}})$ be a positive subsolution to 
\[
- \div(A \nabla w) \le - M w + \delta \qquad \text{in $B_{2r}$,}
\]
with $M>0$, $\delta  \ge 0$, and $A$ positive definite, symmetric, with constant coefficients. Then there exist $C, c>0$ such that
\[
\sup_{x \in B_r} w(x) \le C \| w\|_{L^\infty(B_{2r})}  e^{- c r \sqrt{M}}  + \frac{\delta}{M}.
\]  
\end{lemma}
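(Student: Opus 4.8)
The plan is to reduce the anisotropic inequality to a standard isotropic one by the usual change of variables $\tilde w(y) = w(A^{1/2} y)$, which satisfies $-\Delta \tilde w \le -M \tilde w + \delta$ on a ball $B_{c_0 r}$ with $c_0 = a_N^{-1/2}$ (since $A^{1/2} B_{c_0 r} \subset B_r$), and then prove the estimate for $\tilde w$; here I absorb all constants depending on $A, N$ into $C, c$ at the end. Thus it suffices to treat $A = \textrm{Id}$.

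First I would handle the inhomogeneous term by shifting: set $\bar w = (w - \delta/M)^+$. On the set where $w > \delta/M$ one has $-\Delta \bar w = -\Delta w \le -M w + \delta = -M \bar w$, and on the complement $\bar w \equiv 0$; a standard truncation argument shows $\bar w$ is a nonnegative weak subsolution of $-\Delta \bar w + M \bar w \le 0$ in $B_{2r}$ (in the distributional sense, testing against nonnegative functions), and clearly $\|\bar w\|_{L^\infty} \le \|w\|_{L^\infty}$. So it is enough to prove: a nonnegative subsolution of $-\Delta \bar w + M \bar w \le 0$ in $B_{2\rho}$ satisfies $\sup_{B_\rho} \bar w \le C \|\bar w\|_{L^\infty(B_{2\rho})} e^{-c\rho\sqrt M}$.

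For this last step I would use a comparison argument with an explicit radial supersolution of $-\Delta \psi + M\psi = 0$. The clean choice is the fundamental solution of the modified Helmholtz operator, or more elementarily the function $\psi(x) = \|\bar w\|_{L^\infty(B_{2\rho})}\, \Phi(|x|)$ where $\Phi$ solves $-\Phi'' - \frac{N-1}{t}\Phi' + M\Phi = 0$ on an annulus; one can also just take $\psi(x) = \|\bar w\|_\infty\, e^{\sqrt M (|x| - 2\rho)} \cdot (2\rho/|x|)^{(N-1)/2}$ type ansatz, or most simply bound $\bar w$ by a one-dimensional barrier $\cosh$-type profile. Concretely: pick any point $x_0 \in B_\rho$; on the ball $B_\rho(x_0) \subset B_{2\rho}$, compare $\bar w$ with $\|\bar w\|_\infty \frac{\sinh(\sqrt M |x - x_0|) \text{-ratio}}{\cdots}$ — more safely, compare with $h(x) := \|\bar w\|_\infty \cdot \frac{I_{N/2-1}(\sqrt M |x-x_0|)}{(\sqrt M |x-x_0|)^{N/2-1}} \Big/ \big[\text{same at } |x-x_0| = \rho\big]$, the radial solution of $-\Delta h + M h = 0$ that equals $\|\bar w\|_\infty$ on $S_\rho(x_0)$ and is smooth at the center; since $h \ge \bar w$ on $\partial B_\rho(x_0)$ and both solve/subsolve the same equation, the maximum principle gives $\bar w(x_0) \le h(x_0)$, and the asymptotics of the modified Bessel function $I_\nu(z) \sim e^z/\sqrt{2\pi z}$ yield $h(x_0) = \|\bar w\|_\infty \, O(z^{(N-1)/2}) e^{-z}|_{z = \sqrt M \rho} \le C \|\bar w\|_\infty e^{-c\sqrt M \rho}$ for $\sqrt M \rho \ge 1$ (and the estimate is trivial for $\sqrt M \rho$ bounded, adjusting $C$). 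Undoing all the reductions gives the claimed bound with $r$ in place of $\rho$ up to the dimensional factor $c_0$.

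The main obstacle is the low regularity: $w$ is only $H^1 \cap C$, so "testing the equation" and applying the maximum principle must be done in the weak sense. I would address the truncation step ($\bar w = (w-\delta/M)^+$ is a weak subsolution) via Kato-type inequalities / standard Stampacchia truncation, and the comparison step via the weak maximum principle for $-\Delta + M$ (testing $(\bar w - h)^+$, extended by zero outside $B_\rho(x_0)$, which lies in $H^1_0(B_\rho(x_0))$ since $\bar w \le h$ on the boundary by continuity). None of this is deep, but it must be stated carefully; the genuinely computational part — extracting the exponential decay rate from the radial barrier — is routine once the barrier is chosen, and choosing the $\cosh/\sinh$ one-dimensional barrier $\Phi(t)=\frac{\cosh(\sqrt M\, t)}{\cosh(\sqrt M\,\rho)}$ composed with a supersolution check even avoids Bessel functions at the cost of a slightly worse (still exponential) constant.
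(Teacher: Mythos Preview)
Your approach is correct and reaches the conclusion, but by a genuinely different route. The paper works directly with the anisotropic operator, with no change of variables and no Kato truncation: it handles $\delta$ by simply adding the constant $\delta/M$ to the barrier, and as comparison function it uses the explicit sum $z(x)=\sum_{i=1}^N \cosh(\sqrt{M}\,x_i/\Lambda)$ with $\Lambda$ the largest eigenvalue of $A$, for which one checks $\div(A\nabla z)\le Mz$ by hand and observes that on $S_r(x_0)$ some coordinate satisfies $|x_i-x_{0,i}|\ge r/\sqrt N$, giving $z\ge Ce^{c\sqrt M\,r}$. This makes the paper's argument entirely elementary (no Bessel asymptotics, no Stampacchia step), at the price of a non-sharp decay constant; your radial Bessel barrier is the optimal one but costs more machinery. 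Two minor points on your write-up: first, after the change of variables the images of $B_r$ and $B_{2r}$ are nested ellipsoids, not balls, so to actually land on $\sup_{B_r} w$ you should apply the isotropic estimate on a ball $B_{c_0 r}(A^{-1/2}x_0)\subset A^{-1/2}B_{2r}$ for each fixed $x_0\in B_r$ rather than on concentric balls (your phrasing glosses over this); second, your closing suggestion of the radial barrier $\Phi(|x|)=\cosh(\sqrt M\,|x|)/\cosh(\sqrt M\,\rho)$ does not work in dimension $N\ge 2$, since $\Delta\big(\cosh(\sqrt M\,|x|)\big)=M\cosh(\sqrt M\,|x|)+\tfrac{N-1}{|x|}\sqrt M\,\sinh(\sqrt M\,|x|)\ge M\cosh(\sqrt M\,|x|)$, so this is a \emph{sub}solution of $-\Delta+M$, not a supersolution --- the paper's sum-of-cosh barrier is exactly the elementary fix.
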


\begin{proof}
Let $x_0 \in B_r$. The function $\bar w:= w/\|w\|_{L^\infty(B_r(x_0))}$ is a positive subsolution to 
\[
- \div(A \nabla \bar w) \le - M \bar w + \frac{\delta}{\|w\|_{L^\infty(B_r(x_0))}}, \qquad 	\bar w \le 1 \qquad \text{in $B_r(x_0)$}.
\]
Let $\Lambda$ be the maximal eigenvalue of $A$. Then, as observed in  \cite[Lemma 5.2]{CPQ}, the function
\[
z(x) = \sum_{i=1}^N \cosh\left(\frac{ \sqrt{M} x_i}{\Lambda}\right)
\]
is a supersolution of $\div(A \nabla z) \le M z$ in $B_r$, and satisfy
\[
z(x) \ge C e^{c \sqrt{M} r}  \qquad \text{for every $x \in S_r$}
\]
for suitable $c, C>0$ depending on $A$ and $N$. Let us consider 
\[
\bar z(x) := \frac{z(x-x_0)}{C e^{c\sqrt{M}r}} + \frac{\delta}{M \|w\|_{L^\infty(B_r(x_0))}}.
\]
We have
\[
- \div(A \nabla \bar z) \ge - M \bar z + \frac{\delta}{\|w\|_{L^\infty(B_r(x_0))}} \qquad \text{in $B_r(x_0)$}, 
\]
with $\bar z \ge 1$ on $S_{r}(x_0)$. Then the comparison principle yields
\[
w(x_0) \le C \|w\|_{L^\infty(B_r(x_0))} z(0) e^{-c \sqrt{M} r} + \frac{\delta}{M} \le C \|w\|_{L^\infty(B_{2r})} e^{-c \sqrt{M} r} + \frac{\delta}{M},
\] 
and we obtain the thesis by taking the supremum over $x_0 \in B_r$.
\end{proof}

\begin{lemma}\label{lem: liou ent}
Let $A$ be a positive definite symmetric matrix with constant coefficients. Suppose that $w$ is globally $\alpha$-H\"older continuous in $\Omega$, for some $\alpha \in (0,1)$. 
\begin{itemize}
\item[($i$)] If $\div(A \nabla w) = 0$ in $\Omega=\R^N$, then $w$ is constant.
\item[($ii$)] If $\div(A \nabla w) = 0$ in a half-space $\Omega$, and $w$ is constant on the boundary, then it is constant.
\end{itemize}
\end{lemma}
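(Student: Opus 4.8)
The plan is to reduce both statements to the classical Liouville theorem for harmonic functions via the linear change of variables that turns the anisotropic operator into the Laplacian. Since $A$ is positive definite and symmetric, we can write $A = A^{1/2} A^{1/2}$ and set $\tilde w(y) := w(A^{1/2} y)$. A direct computation shows that $\div(A \nabla w) = 0$ in $\Omega$ is equivalent to $\Delta \tilde w = 0$ in $\tilde \Omega := A^{-1/2} \Omega$. Moreover, since $A^{1/2}$ is a bi-Lipschitz linear map, $\tilde w$ is globally $\alpha$-H\"older continuous in $\tilde \Omega$ whenever $w$ is globally $\alpha$-H\"older continuous in $\Omega$, with a comparable seminorm.

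For part ($i$), $\tilde \Omega = A^{-1/2} \R^N = \R^N$, so $\tilde w$ is an entire harmonic function with $|\tilde w(y) - \tilde w(0)| \le C |y|^\alpha$ for all $y$. By the gradient estimate for harmonic functions, $|\nabla \tilde w(y)| \le \frac{C_N}{R} \sup_{B_R(y)} |\tilde w - \tilde w(y)| \le C_N' R^{\alpha - 1}$, and letting $R \to +\infty$ (using $\alpha < 1$) gives $\nabla \tilde w \equiv 0$, hence $\tilde w$, and therefore $w$, is constant. For part ($ii$), the image $\tilde \Omega = A^{-1/2} \Omega$ of a half-space under an invertible linear map is again a half-space, and $\tilde w$ is harmonic in $\tilde \Omega$, globally H\"older continuous, and constant — say equal to $c$ — on $\partial \tilde \Omega$. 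Replacing $\tilde w$ by $\tilde w - c$ we may assume $\tilde w = 0$ on the boundary. Extending $\tilde w$ by odd reflection across the bounding hyperplane of $\tilde \Omega$ produces a function that is harmonic on all of $\R^N$ (the odd reflection of a harmonic function vanishing on the hyperplane is harmonic across it) and still globally $\alpha$-H\"older continuous; by part ($i$) it is constant, and since it vanishes on the hyperplane the constant is $0$, so $\tilde w \equiv 0$ and $w \equiv c$.

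The only mildly delicate point is justifying that the odd reflection of $\tilde w$ is genuinely harmonic across the hyperplane rather than merely weakly so away from it: this is the standard Schwarz reflection principle, valid because $\tilde w$ is continuous up to the boundary and vanishes there, so the reflected function is continuous and weakly harmonic on $\R^N$, hence harmonic by Weyl's lemma. No step here presents a serious obstacle; the content is entirely in observing that the anisotropic problem is linearly equivalent to the isotropic one and then invoking the classical entire-Liouville theorem together with Schwarz reflection.
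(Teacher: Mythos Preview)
Your proof is correct and follows essentially the same approach as the paper: a linear change of variables reduces the anisotropic operator to the Laplacian, after which part~($i$) is the classical Liouville theorem for sublinearly growing harmonic functions, and part~($ii$) follows by odd reflection across the bounding hyperplane and another application of~($i$). You supply a bit more detail (the explicit gradient estimate, the appeal to Schwarz reflection and Weyl's lemma), but the argument is the same.
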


Here and in what follows we say that a function $w$ is globally $\alpha$-H\"older continuous in $\Omega$ if its $\alpha$-H\"older semi-norm $[w]_{C^{0,\alpha}(\overline{\Omega})}$ is bounded; notice that we do not ask that $w \in L^\infty(\Omega)$.

\begin{proof}
($i$) After a rotation and a scaling, we obtain a harmonic function $\tilde w$ in $\R^N$, still globally $\alpha$-H\"older continuous, thus constant by the Liouville theorem.

($ii$) After a rotation and a scaling, we obtain a harmonic function $\tilde w$ in a half-space, constant on the boundary of the half-space. We can then extend it in a symmetric way, to obtain a harmonic function in the whole space $\R^N$, still globally $\alpha$-H\"older continuous, and hence constant.
\end{proof}

We address now the proof of Theorem \ref{thm: lv}. Let $\alpha \in (0,\bar \nu/2)$, and suppose by contradiction that $\{\mf{u}_\beta\}$ is not bounded in $C^{0,\alpha}(\overline{\Omega})$, namely there exists a sequence $\beta \to +\infty$ such that 
\[
L_\beta:= \sup_i \sup_{\substack{x \neq y, \\ x, y \in \overline{\Omega}}} \frac{|u_{i,\beta}(x)- u_{i,\beta}(y)|}{|x-y|^\alpha}  \to +\infty.
\]
Since, for each $\beta$ fixed, $\mf{u}_\beta$ is of class $C^{0,\alpha'}(\overline{\Omega})$ with $\alpha' > \alpha$, we can assume w.l.o.g. that $L_\beta$ is achieved by $u_{1,\beta}$ at the pair $(x_\beta, y_\beta)$. The uniform boundedness in $L^\infty(\Omega)$ yields
\[
|x_\beta-y_\beta|^\alpha = \frac{|u_{1,\beta}(x_\beta)- u_{1,\beta}(y_\beta)|}{L_\beta} \le \frac{2 \|u_{1,\beta}\|_{L^\infty(\Omega)}}{L_\beta} \to 0.
\]
We consider the following blow-up of $\mf{u}_\beta$ with center in $x_\beta$, with $r_\beta \to 0^+$ to be chosen later:
\[
\mf{v}_\beta(x):= \frac{1}{L_\beta r_\beta^\alpha} \mf{u}_\beta(x_\beta + r_\beta x), \qquad x \in \Omega_\beta := \frac{\Omega-x_\beta}{r_\beta}.
\]
According to the behavior of $\dist(x_\beta, \pa \Omega_\beta)$, and by the regularity of $\pa \Omega$, either $\Omega_\beta$ exhausts $\R^N$ as $\beta \to +\infty$, or $\Omega_\beta$ tends to a half-space. In both cases, we denote the limit domain by $\Omega_\infty$. 

Plainly, $\mf{v}_\beta$ is a positive solution to
\[
\begin{cases}
L_i v_{i,\beta}= M_\beta v_{i,\beta} \sum_{j \neq i} b_{ij}  v_{j,\beta}  &\text{ in $\Omega_\beta$}\\
v_{i,\beta} = \varphi_{i,\beta} &\text{ on $\pa \Omega_\beta$},
\end{cases} 
\]
where $M_\beta = L_\beta r_\beta^{2+\alpha} \beta$, and $\varphi_{i,\beta}$ is defined by scaling the boundary datum $\varphi_i$. Furthermore, for all $\beta >1$
\[
\max_i \max_{\substack{x \neq y, \\ x, y \in \overline{\Omega}}} \frac{|v_{i,\beta}(x)- v_{i,\beta}(y)|}{|x-y|^\alpha} =  \frac{|v_{1,\beta}(0)- v_{1,\beta}\left(\frac{y_\beta-x_\beta}{r_\beta} \right)|}{\left|\frac{x_\beta-y_\beta}{r_\beta}\right|^\alpha} = 1.
\]

The next lemma will be useful in order to deal with the case when the scaled domains converge to a half-space.

\begin{lemma}\label{lem: half-space}
Suppose that $\Omega_\beta$ tends to a half-space $\Omega_\infty$. Then it is possible to extend $\mf{v}_\beta$ outside $\Omega_\beta$ in a Lipschitz fashion, in such a way that:
\begin{itemize}
\item[($i$)] If $\{\mf{v}_\beta(0)\}$ is bounded, then 
$\mf{v}_\beta \to \mf{v}$ in $C^{0,\alpha'}_{\loc}(\R^N)$ for every $0<\alpha'<\alpha$, up to a subsequence; moreover, the limit function $\mf{v}$ attains a constant value on the boundary $\pa \Omega_\infty$, and at most one component is different from $0$ in $\R^N$.
\item[($ii$)] If $\{\mf{v}_\beta(0)\}$ is unbounded, then $\tilde{\mf{v}}_\beta(x) := \mf{v}_\beta(x)-\mf{v}_\beta(0)$ converges to $\tilde{\mf{v}}$ in $C^{0,\alpha'}_{\loc}(\R^N)$ for every $0<\alpha'<\alpha$, up to a subsequence; moreover, the limit function $\tilde{\mf{v}}$ attains a constant value on the boundary $\pa \Omega_\infty$.
\end{itemize}
\end{lemma}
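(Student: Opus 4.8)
The plan is to prove Lemma~\ref{lem: half-space} by combining a uniform H\"older extension with Arzel\`a--Ascoli compactness, and then treating separately the convergence of the rescaled Dirichlet data and the segregation of the limit profile.

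First I would extend each $v_{i,\beta}$ from $\overline{\Omega_\beta}$ to all of $\R^N$ keeping a uniformly bounded $\alpha$-H\"older seminorm. Since $v_{i,\beta}$ is only H\"older, a McShane/Whitney-type formula suffices: setting $\widetilde v_{i,\beta}(x) := \inf_{y \in \overline{\Omega_\beta}}\big( v_{i,\beta}(y) + [v_{i,\beta}]_{C^{0,\alpha}}\,|x-y|^\alpha \big)$ produces an extension with the same $\alpha$-H\"older constant, because $t \mapsto t^\alpha$ is subadditive for $\alpha \le 1$; recall the blow-up normalization forces $[\mf v_\beta]_{C^{0,\alpha}(\overline{\Omega_\beta})} \equiv 1$. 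In case ($i$), since $\{\mf v_\beta(0)\}$ is bounded, this uniform H\"older bound makes $\{\mf v_\beta\}$ equibounded and equicontinuous on every compact set, so up to a subsequence $\mf v_\beta \to \mf v$ in $C^{0,\alpha'}_{\loc}(\R^N)$ for every $\alpha' < \alpha$. In case ($ii$) the same applies to $\tilde{\mf v}_\beta = \mf v_\beta - \mf v_\beta(0)$, which vanishes at $0$ and has the same H\"older seminorm.

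Next I would show the Dirichlet data become constant in the limit. On $\pa\Omega_\beta$ one has $v_{i,\beta} = \varphi_{i,\beta}$, where $\varphi_{i,\beta}(x) = (L_\beta r_\beta^\alpha)^{-1}\varphi_i(x_\beta + r_\beta x)$ and $\varphi_i \in C^{1,\gamma}(\overline\Omega) \subset C^{0,1}_{\loc}$; hence for $x,y$ in a fixed ball
\[
|\varphi_{i,\beta}(x) - \varphi_{i,\beta}(y)| \le \frac{[\varphi_i]_{C^{0,1}}\,r_\beta}{L_\beta r_\beta^{\alpha}}\,|x-y| = \frac{[\varphi_i]_{C^{0,1}}}{L_\beta}\,r_\beta^{1-\alpha}\,|x-y| \longrightarrow 0,
\]
since $\alpha < 1$, $r_\beta \to 0$ and $L_\beta \to +\infty$. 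Because we are in the half-space case, $\dist(0, \pa\Omega_\beta)$ stays bounded, so on $\pa\Omega_\beta \cap B_R$ the functions $\varphi_{i,\beta}$ (resp. $\varphi_{i,\beta} - v_{i,\beta}(0)$) are also uniformly bounded, via the H\"older bound together with the boundedness of $\mf v_\beta(0)$ (resp. with $\tilde{\mf v}_\beta(0) = 0$). A uniformly bounded, asymptotically-constant family on sets Hausdorff-converging to $\pa\Omega_\infty$ converges to a constant, and together with the $C^{0,\alpha'}_{\loc}$-convergence this yields $v_i \equiv c_i$ on $\pa\Omega_\infty$ (resp. $\tilde v_i \equiv \tilde c_i$), which is the boundary part of ($i$) and ($ii$). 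Moreover, passing to the limit in $\varphi_{i,\beta}\varphi_{j,\beta} \equiv 0$ on $\pa\Omega_\beta$ (inherited from $\varphi_i\varphi_j \equiv 0$) gives $c_i c_j = 0$ for $i \ne j$, so at most one $c_i$ is nonzero. For the segregation in case ($i$), observe that the sign of the competition term gives $\div(A_i\nabla v_{i,\beta}) \ge 0$, so $v_i \ge 0$ is $A_i$-subharmonic in $\Omega_\infty$ and, by the uniform H\"older bound and boundedness of $\mf v_\beta(0)$, grows at most like $|x|^\alpha$ with $\alpha < 1$. If $c_i = 0$, a diagonalizing linear change of variables makes $v_i$ a nonnegative subharmonic function on a half-space, vanishing on the boundary, with sublinear growth; comparing it on each half-ball $B_R^+$ with the harmonic function sharing its boundary data and using the boundary Harnack estimate $h_R(x) \le C R^{\alpha-1} x_N$ for $|x| < R/2$, then letting $R \to +\infty$, forces $v_i \equiv 0$. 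Since at most one $c_i \ne 0$, at most one component of $\mf v$ is nontrivial.

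I expect the main obstacle to be the boundary analysis: forcing the rescaled Dirichlet data to converge to a single constant hinges on the delicate balance between $\alpha < 1$, the rate $r_\beta \to 0$, and the half-space geometry ($\dist(0,\pa\Omega_\beta)$ bounded), while the Phragm\'en--Lindel\"of/boundary-Harnack step that rules out a nontrivial sublinear subharmonic limit is the other technical point; by contrast, the extension and the Arzel\`a--Ascoli compactness are routine.
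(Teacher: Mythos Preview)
Your argument is correct, and the extension/Arzel\`a--Ascoli/boundary-constancy steps are essentially what the paper does. The genuine difference lies in how you force the vanishing of all but one component. You pass to the limit and then argue via a Phragm\'en--Lindel\"of/boundary-Harnack comparison on the half-space: a nonnegative $A_i$-subharmonic function vanishing on $\pa\Omega_\infty$ with sublinear growth must be zero. The paper instead works \emph{before} the blow-up: it introduces the harmonic extension $\phi_i$ of the boundary datum $\varphi_i$ into $\Omega$ and observes that the comparison principle gives $0 \le u_{i,\beta} \le \phi_i$ in $\Omega$. After scaling this becomes $0 \le v_i \le \phi_{i,\infty}$ in $\Omega_\infty$; since $\phi_i$ is a fixed Lipschitz function, its rescaling $\phi_{i,\infty}$ is a constant (by exactly the $r_\beta^{1-\alpha}/L_\beta \to 0$ computation you use for $\varphi_{i,\beta}$), and the segregation $\varphi_i\varphi_j \equiv 0$ forces at most one of these constants to be nonzero. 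This pins $v_i \equiv 0$ directly, with no limiting PDE analysis or boundary-Harnack step at all.

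The paper's barrier trick is slicker and more elementary---a single maximum-principle application on the original domain replaces your Phragm\'en--Lindel\"of argument---while your approach is more self-contained in that you never leave the blow-up picture and do not need to introduce the auxiliary harmonic extension $\phi_i$. One small caveat: with your McShane extension the limit $v_i$ is not automatically zero \emph{outside} $\Omega_\infty$ when $c_i=0$ (the lemma as stated says ``in $\R^N$''), whereas the paper's specific choice of extending $u_{i,\beta}$ by $\varphi_i$ outside $\Omega$ makes the exterior values coincide with the boundary constant for free. This is harmless for the downstream applications, which only use the behaviour of $\mf v$ inside $\overline{\Omega_\infty}$, but if you want to match the statement exactly you can simply redefine the extension in $\Omega_\infty^c$ to be the constant $c_i$ after the limit.
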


\begin{proof}
($i$) Let $\phi_i$ be the harmonic extension of $\varphi_i$ in $\Omega$, which is $C^{1,\gamma}(\overline{\Omega})$. By the comparison principle $0 \le u_{i,\beta} \le \phi_i$, for every $\beta$. Now, thanks to the Kirszbraun theorem, we can extend the functions $\varphi_i$ in the whole space $\R^N$ in a Lipschitz fashion, preserving their Lipschitz constant. The extended function will be still denoted by $\varphi_i$. We also extend $\phi_i$ and $u_{i,\beta}$ in $\R^N$, by letting them equal to $\varphi_i$ in $\Omega^c$. Let $\varphi_{i,\beta}$ and $\phi_{i,\beta}$ be given by scaling $\phi_i$ and $\varphi_i$ in the same way of $u_{i,\beta}$. We have that $v_{i,\beta}$, $\varphi_{i,\beta}$ and $\phi_{i,\beta}$ are defined everywhere, and $\varphi_{i,\beta} = \phi_{i,\beta} = v_{i,\beta}$ in $\Omega_\beta^c$. Plainly:
\begin{itemize}
\item[($i$)] $v_{i,\beta}$ is locally $\alpha$-H\"older continuous in $\R^N$, with $\alpha$-H\"older seminorm $[v_{i,\beta}]_{C^{0,\alpha}(K)}$ uniformly bounded with respect to $\beta$, for any compact set $K \subset \R^N$. 
\item[($ii$)] $\varphi_{i,\beta}$ and $\phi_{i,\beta}$ are locally Lipschitz continuous in $\R^N$, with Lipschitz seminorms $[\varphi_{i,\beta}]_{C^{0,1}(K)}$ and $[\phi_{i,\beta}]_{C^{0,1}(K)}$ uniformly bounded with respect to $\beta$, for any compact set $K \subset \R^N$.
\end{itemize}
Thus, since $\{v_{i,\beta}(0)\}$ is bounded, up to a subsequence $v_{i,\beta} \to v_i$ locally uniformly in $\R^N$. 
But $v_{i,\beta} = \varphi_{i,\beta} = \phi_{i,\beta}$ in $\Omega_\beta^c$, so that that $\varphi_{i,\beta} \to \varphi_{i,\infty}$ and $\phi_{i,\beta} \to \phi_{i,\infty}$ locally uniformly in $\Omega_\infty^c$. In turn, by uniform Lipschitz continuity, we infer that $\varphi_{i,\beta} \to \varphi_{i,\infty}$ and $\phi_{i,\beta} \to \phi_{i,\infty}$ locally uniformly in the whole of $\R^N$. The local uniform convergence entails $\varphi_{i,\infty} \cdot \varphi_{j,\infty} \equiv 0$ in $\overline{\Omega_\infty}$. Moreover $0 \le v_i \le \phi_{i,\infty}$ in $\Omega_\infty$.

Now we show that both $\varphi_{i,\infty}$ and $\phi_{i,\infty}$ are constant in $\R^N$, and, since they coincide in $\Omega_\infty^c$, they actually coincide everywhere. This is a consequence of the fact that $\varphi_{i,\infty}$ and $\phi_{i,\infty}$ are obtained as limits of scaling of a fixed Lipschitz continuous function, so that if $x \neq y$
\[
\frac{|\varphi_{i,\beta}(x)- \varphi_{i,\beta}(y)|}{|x-y|^\alpha} = \frac{|\varphi_{i}(x_\beta + r_\beta x)- \varphi_{i}(x_\beta + r_\beta y)|}{L_\beta r_\beta^\alpha |x-y|^\alpha} \le \frac{[\varphi_i]_{C^{0,1}(\R^N)}  r_\beta^{1-\alpha}} {L_\beta} |x-y|^{1-\alpha},
\]
and the right hand side tends to $0$ locally uniformly in $\R^N$. The very same argument proves that also $\phi_{i,\infty}$ is constant.

To sum up, so far we showed that the extended functions $v_{i,\beta}$, $\varphi_{i,\beta}$, $\phi_{i,\beta}$ converge locally uniformly in $\R^N$, coincide in $\Omega_\beta^c$, and $\varphi_{i,\infty} = \phi_{i,\infty}$ are constants in $\R^N$. Recalling the segregation condition $\varphi_{i,\infty} \cdot \varphi_{j,\infty} \equiv 0$ in $\Omega_\infty$, and hence also in $\R^N$, we deduce that at most one component $\phi_{i,\infty}$ can be different from $0$. But then, since $0 \le v_i \le \phi_{i,\infty}$ in $\Omega_\infty$, at most one component $v_i$ is different from $0$ in $\Omega_\infty$. And finally, since $v_i = \varphi_{i,\infty}$ in $\Omega_\infty^c$, we have that $v_i$ is constant on $\pa \Omega_\infty$.

The proof of ($ii$) is analogue.
\end{proof}

\begin{lemma}\label{lem: bdd 0}
Let $r_\beta \to 0^+$ be such that
\begin{itemize}
\item[($i$)] there exists $R'>0$ such that $|x_\beta-y_\beta| \le R' r_\beta$; 
\item[($ii$)] $M_\beta \not \to 0$.
\end{itemize}
Then $\{\mf{v}_\beta(0)\}$ is bounded in $\beta$.
\end{lemma}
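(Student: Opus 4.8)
The plan is to argue by contradiction, following the blow-up scheme of \cite{CTV05}. Suppose $\{\mf{v}_\beta(0)\}$ were unbounded; passing to a subsequence we may assume that $K_\beta := \max_i v_{i,\beta}(0) \to +\infty$, that the index $i_0$ realizing this maximum is the same for all $\beta$, and, using assumption (ii), that $M_\beta \ge c_0 > 0$ for a constant $c_0$. Throughout, the key structural fact is that the normalization forces $[v_{i,\beta}]_{C^{0,\alpha}(\overline{\Omega_\beta})}\le 1$ for every $i$, with equality for $i=1$: the ratio $1$ is attained at the pair $(0,z_\beta)$, where $z_\beta := (y_\beta-x_\beta)/r_\beta$ satisfies $|z_\beta| = |x_\beta-y_\beta|/r_\beta \le R'$ by assumption (i).

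First I would localize the blow-up. Since $|v_{i_0,\beta}(x)-v_{i_0,\beta}(0)|\le |x|^\alpha$, on the large ball $B_{\sigma_\beta}$ with $\sigma_\beta := (K_\beta/2)^{1/\alpha}\to+\infty$ one has $v_{i_0,\beta}\ge K_\beta/2$. Hence every other component $v_{j,\beta}$ ($j\ne i_0$) is a positive subsolution of $-\div(A_j\nabla v_{j,\beta}) \le -\tfrac12 M_\beta b_{j i_0} K_\beta\, v_{j,\beta}$ in $B_{\sigma_\beta}\cap\Omega_\beta$, so Lemma \ref{lem: cpq} (with $\delta = 0$), combined with $\|v_{j,\beta}\|_{L^\infty(B_{\sigma_\beta})}\le K_\beta + \sigma_\beta^\alpha \le \tfrac32 K_\beta$, yields the super-exponential decay $\sup_{B_{\sigma_\beta/2}\cap\Omega_\beta} v_{j,\beta} \le C K_\beta\, e^{-c\sqrt{c_0}\,K_\beta^{1/\alpha+1/2}}\to 0$. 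In particular the reaction term in the equation for $v_{i_0,\beta}$, namely $g_\beta := M_\beta v_{i_0,\beta}\sum_{j\ne i_0} b_{i_0 j} v_{j,\beta}$, tends to $0$ in $L^\infty_{\loc}$: the factor $M_\beta$ is absorbed because $M_\beta K_\beta^2\, e^{-c\sqrt{M_\beta}\,K_\beta^{1/\alpha+1/2}}\to 0$ as soon as $\sqrt{M_\beta}\,K_\beta^{1/\alpha+1/2}\to+\infty$.

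Next I would pass to the limit in the equation for $w_\beta := v_{i_0,\beta}-K_\beta$, which satisfies $w_\beta(0) = 0$, $|w_\beta(x)|\le|x|^\alpha$, and $\div(A_{i_0}\nabla w_\beta) = g_\beta\to 0$ in $L^\infty_{\loc}$. By interior (and, when $\Omega_\beta$ tends to a half-space, boundary) elliptic estimates $\{w_\beta\}$ is bounded in $C^{1,\gamma}_{\loc}$, so, up to a subsequence, $w_\beta\to w$ in $C^1_{\loc}(\Omega_\infty)$, with $w$ globally $\alpha$-H\"older, $A_{i_0}$-harmonic in $\Omega_\infty$, and $w(0)=0$. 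If $\Omega_\infty=\R^N$, Lemma \ref{lem: liou ent}(i) forces $w\equiv 0$; if $\Omega_\infty$ is a half-space, one first checks as in Lemma \ref{lem: half-space} that the rescaled boundary data become flat, so that $w$ is constant on $\pa\Omega_\infty$, and then Lemma \ref{lem: liou ent}(ii) again gives $w\equiv 0$. In either case $\nabla v_{i_0,\beta} = \nabla w_\beta\to 0$ uniformly on $\overline{B_{R'}}\cap\overline{\Omega_\beta}$.

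Finally I would read off the contradiction from component $1$. If $i_0 = 1$, then for every $\beta$
\[
1 = \frac{|v_{1,\beta}(0)-v_{1,\beta}(z_\beta)|}{|z_\beta|^\alpha} = \frac{|w_\beta(0)-w_\beta(z_\beta)|}{|z_\beta|^\alpha}\le \|\nabla w_\beta\|_{L^\infty(\overline{B_{R'}})}\,|z_\beta|^{1-\alpha},
\]
which is impossible for $\beta$ large since $\|\nabla w_\beta\|_{L^\infty(\overline{B_{R'}})}\to 0$ and $|z_\beta|\le R'$. If $i_0\ne 1$, the super-exponential bound above (plus one more application of the elliptic estimates) gives $v_{1,\beta}\to 0$ in $C^1_{\loc}$, and then $1 = |v_{1,\beta}(0)-v_{1,\beta}(z_\beta)|/|z_\beta|^\alpha\le \|\nabla v_{1,\beta}\|_{L^\infty(\overline{B_{R'}})}|z_\beta|^{1-\alpha}\to 0$, again a contradiction. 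I expect the main obstacle to be the half-space case: one must extend the rescaled solutions across the (only asymptotically flat) boundary $\pa\Omega_\beta$ and combine the constructions behind Lemma \ref{lem: half-space} with up-to-the-boundary $C^{1,\gamma}$ estimates for merely Lipschitz, asymptotically constant Dirichlet data, the interior case being comparatively routine.
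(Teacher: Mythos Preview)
Your proposal is correct and follows essentially the same contradiction scheme as the paper (which in turn refers to \cite[Lemma 6.1]{CTV05}): assume one component blows up at $0$, use Lemma~\ref{lem: cpq} to force the uniform decay of the remaining components and of all reaction terms, pass to the limit in the translated sequence, and apply Lemma~\ref{lem: liou ent} to contradict the normalized H\"older oscillation at the pair $(0,z_\beta)$. The only cosmetic differences are that the paper works on fixed balls $B_{2R}$ rather than your growing balls $B_{\sigma_\beta}$, and organizes the final contradiction by distinguishing whether $z_\beta\to 0$ or not instead of your split $i_0=1$ versus $i_0\neq 1$; your acknowledged obstacle in the half-space case is exactly what the paper handles by appealing to Lemma~\ref{lem: half-space}.
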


\begin{proof}
The proof is analogue to the one of \cite[Lemma 6.1]{CTV05} (see also \cite[Lemma 3.4]{NTTV10} for more details), and hence we only sketch it. Suppose by contradiction that along a subsequence $v_{h,\beta}(0) \to +\infty$ for some index $h$, and let $R>R'$. By assumption ($ii$) and the global H\"older bound, we have that 
\[
I_\beta := M_\beta  \inf_{B_{2R} \cap \Omega_\beta} v_{h,\beta} \to +\infty.
\]
Now we can argue as in \cite[Lemma 6.1]{CTV05}, by using Lemma \ref{lem: cpq} instead of \cite[Lemma 4.4]{CTV05}, to deduce that for every $R>R'$
\[
\|v_{i,\beta}\|_{L^\infty(B_R \cap \Omega_\beta)} \to 0 \quad \forall i \neq h, \quad \text{and} \quad  \|L_i v_{i,\beta}\|_{L^\infty(B_R \cap \Omega_\beta)} \to 0 \qquad \forall i,
\] 
as $\beta \to +\infty$. Let then $\tilde{\mf{v}}_\beta(x) := \mf{v}_\beta(x)-\mf{v}_\beta(0)$. The above discussion shows that $\tilde{\mf{v}}_{\beta} \to \tilde{\mf{v}}$ locally uniformly in $\R^N$, where $\tilde{\mf{v}}$ is globally $\alpha$-H\"older continuous in $\Omega_\infty$, and $\tilde v_i \equiv 0$ for $i \neq h$ (in case $\Omega_\infty$ is a half-space, we can use Lemma \ref{lem: half-space}). The uniform convergence of the $A_i$-Laplacians implies that actually $\tilde{\mf{v}}_{\beta} \to \tilde{\mf{v}}$ in $C^1_{\loc}(\Omega_\infty)$. We claim that $\tilde v_1$ is not constant. To prove the claim, we recall that, by assumption ($i$), $(y_\beta-x_\beta)/r_\beta$ converges to a limit $z$ up to a subsequence. If $z=0$, by boundedness in $C^1_{\loc}$
\[
1 = \frac{|v_{1,\beta}(0)-v_{1,\beta}\left(\frac{y_\beta-x_\beta}{r_\beta}\right)|}{|\frac{y_\beta-x_\beta}{r_\beta}|^{\alpha}} =  \frac{|\tilde v_{1,\beta}(0)-\tilde v_{1,\beta}\left(\frac{y_\beta-x_\beta}{r_\beta}\right)|}{|\frac{y_\beta-x_\beta}{r_\beta}|^{\alpha}} \le C \left|\frac{y_\beta-x_\beta}{r_\beta}\right|^{1-\alpha} \to 0,
\]
a contradiction. Then $z \neq 0$, and $|\tilde v_1(0)- \tilde v_1(z)| = |z|^\alpha$,
so that $\tilde v_1$ is a non-constant $A_1$-harmonic function in $\Omega_\infty$, globally $\alpha$-H\"older continuous. If $\Omega_\infty$ is a half-space, by Lemma \ref{lem: half-space} we can also say that $\tilde v_1$ is constant on $\pa \Omega_\infty$. Therefore Lemma \ref{lem: liou ent} provides a contradiction both for $\Omega_\infty=\R^N$, and for $\Omega_\infty$ equal to a half-space.
\end{proof}

\begin{lemma}\label{lem: rel}
It results that 
\[
\limsup_{\beta \to +\infty} \beta L_\beta |x_\beta-y_\beta|^{2+\alpha} = +\infty
\]
\end{lemma}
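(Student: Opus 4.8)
The plan is to argue by contradiction: I would assume $\beta L_\beta|x_\beta-y_\beta|^{2+\alpha}\le C_0$ along the sequence $\beta\to+\infty$ and reach a contradiction by a blow-up. The natural choice is $r_\beta:=|x_\beta-y_\beta|\to 0^+$, so that $M_\beta=\beta L_\beta r_\beta^{2+\alpha}$ is bounded, hypothesis~($i$) of Lemma~\ref{lem: bdd 0} holds with $R'=1$, and the normalized blow-up $\mf{v}_\beta$ realizes $\max_i\max_{x\ne y}|v_{i,\beta}(x)-v_{i,\beta}(y)|/|x-y|^\alpha=1$ at the pair $0$ and $e_\beta:=(y_\beta-x_\beta)/r_\beta$, with $|e_\beta|=1$. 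Passing to a subsequence, $M_\beta\to M_\infty\in[0,C_0]$, $e_\beta\to e$ with $|e|=1$, and $\Omega_\beta$ exhausts either $\R^N$ or a half-space $\Omega_\infty$. In every case I want to produce a globally $\alpha$-H\"older function which is nonconstant but which one of the Liouville theorems of Section~\ref{sec: liou} (or Lemma~\ref{lem: liou ent}) forces to be constant.

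If $M_\infty>0$, then $M_\beta\not\to0$, so Lemma~\ref{lem: bdd 0} gives that $\{\mf{v}_\beta(0)\}$ is bounded; since moreover the right-hand sides $M_\beta v_{i,\beta}\sum_{j\ne i}b_{ij}v_{j,\beta}$ are then bounded in $L^\infty_{\loc}$, elliptic estimates yield (up to a subsequence, after the Lipschitz extension of Lemma~\ref{lem: half-space} when $\Omega_\infty$ is a half-space) $\mf{v}_\beta\to\mf{v}$ in $C^1_{\loc}(\Omega_\infty)$, where $\mf{v}$ is a vector of nonnegative functions growing at most like $|x|^\alpha$, solving $-\div(A_i\nabla v_i)=-M_\infty v_i\sum_{j\ne i}b_{ij}v_j$, with $|v_1(0)-v_1(e)|=1$ so that $v_1$ is nonconstant. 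When $\Omega_\infty=\R^N$ this is precisely a solution of system~\eqref{sys liou} with $m=k$, $p_{ij}=1$ and $g_{ij}(x,t)=M_\infty b_{ij}t$ (these satisfy (G1)--(G3), since $2\alpha<\bar\nu$), so Theorem~\ref{thm: liou syst} forces every $v_i$ to be constant or identically $0$, contradicting the nonconstancy of $v_1$; when $\Omega_\infty$ is a half-space, Lemma~\ref{lem: half-space}($i$) says that at most one component of $\mf{v}$ is nontrivial, that component is then $A_i$-harmonic in $\Omega_\infty$ and constant on $\pa\Omega_\infty$, hence constant by Lemma~\ref{lem: liou ent}($ii$), again a contradiction.

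If $M_\infty=0$, the sequence $\{\mf{v}_\beta(0)\}$ need not be bounded, so I would instead work with $\tilde{\mf{v}}_\beta:=\mf{v}_\beta-\mf{v}_\beta(0)$, which is locally uniformly bounded in $C^{0,\alpha}$ and has $A_i$-subharmonic components, so that $\tilde{\mf{v}}_\beta\to\tilde{\mf{v}}$ in $C^{0,\alpha'}_{\loc}$ and weakly in $H^1_{\loc}$, with $\tilde v_1$ nonconstant since $|\tilde v_1(0)-\tilde v_1(e)|=1$. The decisive point is that the competition term $L_i\tilde v_{i,\beta}=M_\beta v_{i,\beta}\sum_{j\ne i}b_{ij}v_{j,\beta}$ tends to $0$ in $L^1_{\loc}(\Omega_\infty)$: setting $\delta_\beta:=L_\beta r_\beta^\alpha=|u_{1,\beta}(x_\beta)-u_{1,\beta}(y_\beta)|\le 2\|u_{1,\beta}\|_{L^\infty(\Omega)}$, one has $M_\beta v_{i,\beta}v_{j,\beta}=(M_\beta/\delta_\beta^2)\,u_{i,\beta}(x_\beta+r_\beta\,\cdot)\,u_{j,\beta}(x_\beta+r_\beta\,\cdot)$, so if $\delta_\beta$ stays bounded away from $0$ this is $O(M_\beta)\to0$ uniformly on compacta (and then $\{\mf{v}_\beta(0)\}$ is in fact bounded), whereas if $\delta_\beta\to0$ one argues as in \cite[Lemma~6.1]{CTV05} and \cite[Lemma~3.4]{NTTV10}, using the uniform $L^\infty$-bound on $\mf{u}_\beta$ and the decay estimate of Lemma~\ref{lem: cpq}, to conclude that the mass of the competition term on every ball vanishes. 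Hence $\tilde v_1$ is a globally $\alpha$-H\"older $A_1$-harmonic function in $\Omega_\infty$ — constant on $\pa\Omega_\infty$ if $\Omega_\infty$ is a half-space, by Lemma~\ref{lem: half-space} — so Lemma~\ref{lem: liou ent} forces $\tilde v_1$ to be constant, a contradiction. This exhausts all cases.

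The only genuinely delicate step is the case $M_\infty=0$ with $\{\mf{v}_\beta(0)\}$ unbounded, where Lemma~\ref{lem: bdd 0} is not available and the vanishing of the competition mass must be obtained from the quantitative sub/supersolution estimates recalled above, exactly in the spirit of \cite{CTV05,NTTV10}; the remaining cases reduce cleanly to the Liouville theorems already proved in Section~\ref{sec: liou}.
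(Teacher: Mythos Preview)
Your choice $r_\beta=|x_\beta-y_\beta|$ is not the one the paper makes, and it forces you into an avoidable case split. The paper instead sets
\[
r_\beta=(\beta L_\beta)^{-\frac{1}{2+\alpha}},
\]
so that $M_\beta\equiv1$ identically and $|x_\beta-y_\beta|\le R'r_\beta$ by the contradiction hypothesis. Then Lemma~\ref{lem: bdd 0} applies immediately (hypothesis~($ii$) is trivially satisfied), $\{\mf v_\beta(0)\}$ is bounded, and the rest of your ``$M_\infty>0$'' argument goes through verbatim with $M_\infty=1$. No second case is needed.

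With your scaling, the case $M_\infty>0$ is indeed equivalent to the paper's argument and is fine. But the case $M_\infty=0$ has a real gap. You write that when $\delta_\beta\to0$ ``one argues as in \cite[Lemma~6.1]{CTV05} and \cite[Lemma~3.4]{NTTV10}'' to make the competition mass vanish; however, those arguments --- and this is exactly the content of hypothesis~($ii$) in Lemma~\ref{lem: bdd 0} --- require $M_\beta\not\to0$. The decay furnished by Lemma~\ref{lem: cpq} for the components $v_{i,\beta}$ with $i\neq h$ is of order $e^{-c r\sqrt{M_\beta\inf_{B_{2R}} v_{h,\beta}}}$, so one needs $M_\beta\inf_{B_{2R}} v_{h,\beta}\to+\infty$; when $M_\beta\to0$ and $v_{h,\beta}(0)\to+\infty$ this product is indeterminate and nothing in your sketch controls it. Equivalently, in your own notation $M_\beta v_{i,\beta}v_{j,\beta}=(M_\beta/\delta_\beta^2)\,u_{i,\beta}u_{j,\beta}$, and the ratio $M_\beta/\delta_\beta^2$ is again indeterminate when both $M_\beta\to0$ and $\delta_\beta\to0$. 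So the vanishing of the competition term is not established, and the conclusion that $\tilde v_1$ is $A_1$-harmonic in $\Omega_\infty$ is unjustified in this subcase.

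The simplest repair is precisely the paper's choice of $r_\beta$, which makes the whole $M_\infty=0$ discussion disappear.
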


\begin{proof}
By contradiction, let $\beta L_\beta |x_\beta-y_\beta|^{2+\alpha}$ be bounded. Then, by choosing 
\[
r_\beta = (\beta L_\beta)^{-\frac{1}{2+\alpha}},
\]
we have that $M_\beta=1$, and $|x_\beta-y_\beta| \le R' r_\beta$ for a constant $R'>0$. Hence $\{\mf{v}_\beta(0)\}$ is bounded, by Lemma \ref{lem: bdd 0}, and by uniform H\"older continuity $\mf{v}_\beta \to \mf{v}$ locally uniformly in $\Omega_\infty$, up to a subsequence. In addition, if $\Omega_\infty$ is a half-space, we know by Lemma \ref{lem: half-space} that each component of $\mf{v}$ but possibly one vanishes, and the remaining one is constant on $\pa \Omega_\infty$. Moreover, since $M_\beta=1$, we have that $L_i v_{i,\beta}$ converges locally uniformly, and hence $\mf{v}_\beta \to \mf{v}$ in $C^1_{\loc}(\Omega_\infty)$, with $\mf{v}$ globally $\alpha$-H\"older continuous in $\Omega_\infty$, and 
\[
-\div(A_i \nabla v_i ) = - v_i \sum_{j \neq i} b_{ij} v_j, \quad v_i \ge 0 \qquad \text{in $\Omega_\infty$}.
\]
In fact either $v_i>0$, or $v_i \equiv 0$ in $\Omega_\infty$, by the strong maximum principle. Finally, as in the last part of the proof of Lemma \ref{lem: bdd 0}, we deduce also that $v_1$ is non-constant in $\Omega_\infty$. 

Let $\Omega_\infty = \R^N$. Then, since $2\alpha < \bar \nu$, by Theorem \ref{thm: liou syst}\footnote{In the present case, each $I_h$ is a singleton, and the assumptions on the coupling terms $g_{ij}$ are satisfied since $b_{ij}>0$. Notice also that the $\alpha$-H\"older continuity implies that $v_1,\dots,v_k$ grow at most like $|x|^\alpha$ at infinity} we have that $\mf{v}$ is constant, a contradiction. 

If instead $\Omega_\infty$ is a half-space, by Lemma \ref{lem: half-space} we know that at most one component $v_i$ does not vanish identically. Since $v_1$ is non-constant, we infer that $v_i \equiv 0$ for every $i \neq 1$, and $v_1$ is a non-constant $A_1$-harmonic function in a half-space, globally $\alpha$-H\"older continuous, which attains a constant boundary datum on $\pa \Omega_\infty$. This gives a contradiction with Lemma \ref{lem: liou ent}.
\end{proof}

At this point we fix the choice of $r_\beta$ and complete the contradiction argument.

\begin{proof}[Conclusion of the proof of Theorem \ref{thm: lv}]
Let 
\[
r_\beta= |x_\beta-y_\beta|.
\]
By Lemma \ref{lem: rel}, we have that $M_\beta \to +\infty$. Thus the assumptions of Lemma \ref{lem: bdd 0} are satisfied, and we deduce that $\mf{v}_\beta \to \mf{v}$ locally uniformly in $\R^N$, with $\mf{v}$ globally $\alpha$-H\"older continuous (if $\Omega_\infty$ is a half-space, we consider the extension of $\mf{v}_\beta$ defined in Lemma \ref{lem: half-space}; in this case, we have that $\mf{v}$ is constant on $\pa \Omega_\infty$ and has at most one non-trivial component). Furthermore, there exists $z \in \pa B_1 \cap \overline{\Omega_\infty}$ such that $|v_1(z) - v_1(0)|=1$, and hence $v_1$ is non-constant. 

Now, let $r>0$ and $x_0$ be such that $B_{2r}(x_0) \subset \subset \Omega_\infty$, and let $\eta$ be a smooth cut-off function such that $0\le \eta \le 1$, $\eta \equiv 1$ in $B_r(x_0)$, and $\eta \equiv 0$ in $B_r(x_0)^c$. By testing the equation for $v_{i,\beta}$ with $\eta$, we deduce that 
\begin{equation}\label{1031}
M_\beta \int_{B_r(x_0)} v_{i,\beta} \sum_{j \neq i} b_{ij} v_{j,\beta} \le \int_{B_{2r}(x_0)} \div(A_i \nabla \eta) v_{i,\beta} \le C,
\end{equation}
since $\{v_{i,\beta}\}$ is locally bounded in $L^\infty$. But $M_\beta \to +\infty$, so that $v_i \cdot v_j \equiv 0$ in $\Omega_\infty$. 

Moreover, by testing the equation for $v_{i,\beta}$ with $v_{i,\beta} \eta^2$, we obtain
\begin{equation}\label{1032}
\begin{split}
\int_{B_{r}(x_0)} \langle A_i \nabla v_{i,\beta}, \nabla v_{i,\beta} \rangle & \le 4\int_{B_{2r}(x_0)} \langle A_i \nabla \eta, \nabla \eta \rangle v_{i,\beta}^2 +2 M_\beta \int_{B_{2r}(x_0)} v_{i,\beta}^2 \sum_{j \neq i} b_{ij} v_{j,\beta}\\
& \le  \|v_{i,\beta}\|_{L^\infty(B_{2r(x_0)})} \left(C + M_\beta \int_{B_{2r}(x_0)} v_{i,\beta} \sum_{j \neq i} b_{ij} v_{j,\beta}\right) \le C,
\end{split}
\end{equation}
where we used \eqref{1031}.  That is, $\{\mf{v}_\beta\}$ is locally bounded in $H^1_{\loc}(\Omega_\infty)$ and hence, up to a further subsequence, $\mf{v}_\beta \weak \mf{v}$ weakly in $H^1_{\loc}(\Omega_\infty)$. Since $-\div(A_i \nabla v_{i,\beta}) \le 0$ in $\Omega_\beta$, if we take the weak limit we infer that
\[
v_i \cdot v_j \equiv 0 \quad \text{if $i \neq j$}, \quad \text{and} \quad -\div(A_i \nabla v_i) \le 0 \quad \text{in $\Omega_\infty$}.
\]
If $\Omega_\infty$ is a half-space, Lemma \ref{lem: half-space} also implies that each component of $\mf{v}$ but $v_1$ must vanish identically. If instead $\Omega=\R^N$, the same conclusion follows from Theorem \ref{thm: liou seg}, since $2\alpha < \bar \nu$. In any case, for every $\beta$
\[
-\div(A_1 \nabla v_{1,\beta})+ \sum_{j =2}^k \frac{b_{1j}}{b_{j1}} \div(A_j \nabla v_{j,\beta}) = M_\beta \sum_{j=2}^k \, \sum_{h \neq 1,j} \frac{b_{1j} b_{jh} }{b_{j1} } v_{j,\beta} v_{h,\beta} \ge 0 \quad \text{in $\Omega_\beta$} 
\]
in weak sense. By passing to the weak limit, and recalling that $v_j \equiv 0$ in $\R^N$ for $j \neq 1$, we deduce that 
\begin{equation}\label{1033}
-\div(A_1 \nabla v_1) \ge 0 \qquad \text{in $\Omega_\infty$}
\end{equation}
in weak sense. But then $\div(A_1 \nabla v_1) = 0$ in $\Omega_\infty$, and Lemma \ref{lem: liou ent} gives a contradiction with the fact that $v_1$ is $\alpha$-H\"older continuous and non-constant in $\R^N$. This contradiction finally shows that $\{\mf{u}_\beta\}$ is bounded in $C^{0,\alpha}(\overline{\Omega})$, as desired. Now we proceed with the second part of the theorem. 

Clearly, we have that up to a subsequence $\mf{u}_\beta \to \mf{u}$ in $C^{0,\alpha}(\overline{\Omega})$, for every $\alpha \in (0,\bar \nu/2)$. As in \eqref{1031} and \eqref{1032} it is possible to check that $\mf{u}_\beta \weak \mf{u}$ weakly in $H^1_{\loc}(\Omega)$, and that the limit function is segregated: $u_i \cdot u_j \equiv 0$ for $i \neq j$. Moreover, by testing the equation for $u_{i,\beta}$ with $(u_{i,\beta}-u) \eta$, where $\eta \in C^\infty_c(\Omega)$ is an arbitrary cut-off function, we deduce that
\[
\begin{split}
\int_{\Omega} \eta \langle A_i \nabla u_{i,\beta}, \nabla (u_{i,\beta}-u) \rangle &= - \int_{\Omega} (u_{i,\beta}-u_i) \langle A_i\nabla u_{i,\beta}, \nabla \eta \rangle - \beta \int_{\Omega} \eta u_{i,\beta} \sum_{j \neq i} b_{ij} u_{j,\beta} (u_{i,\beta}-u_{j,\beta}) \\
& \le C\|u_{i,\beta}- u_i\|_{L^\infty(\Omega)} \left( \| \nabla u_{i,\beta}\|_{L^2(\supp \eta)} + \beta \int_{\Omega} u_{i,\beta} \sum_{j \neq i} b_{ij} u_{j,\beta} \right) \to 0
\end{split}
\]
as $\beta \to \infty$, by uniform convergence and local boundedness in $H^1$. Therefore, by weak convergence
\[
\begin{split}
0 &= \lim_{\beta \to +\infty} \int_{\Omega} \eta \langle A_i \nabla u_{i,\beta}, \nabla (u_{i,\beta}-u) \rangle \\
& = \lim_{\beta \to +\infty} \int_{\Omega} \eta \big(  \langle A_i \nabla u_{i,\beta}, \nabla u_{i,\beta} \rangle -  \langle A_i \nabla u_i, \nabla u_i \rangle \big),
\end{split}
\]
which gives the strong convergence $\mf{u}_\beta \to \mf{u}$ in $H^1_{\loc}(\Omega)$. Finally, to show that $u_i$ is $A_i$-harmonic in $\{u_i>0\}$, we proceed as for \eqref{1033}, proving that $\div(A \nabla u_i) \le 0$ in $\{u_i>0\}$. But by weak convergence $\div(A_i \nabla u_i) \ge 0$ in $\Omega$, and the thesis follows.
\end{proof}

\begin{proof}[Proof of Theorem \ref{thm: limit lv}]
Recall that $k=2$, and we reduced to the case when $A_1=A$ is diagonal, with lowest eigenvalue equal to $1$, $A_2=\textrm{Id}$, and $a_{12}=a_{21}$. We use the notation $(u_{1,\beta}, u_{2,\beta}) = (u_\beta, v_\beta)$. Let us consider $w_\beta = u_\beta-v_\beta$. We know that, up to a subsequence, $w_\beta \to w = u-v$ in $C^{0,\alpha}(\overline{\Omega}) \cap H^1_{\loc}(\Omega)$ for every $0<\alpha< \frac{\bar \nu}2= \frac{\nu_{A,N}}2$. On the other hand, since $\div(A_i \nabla u_\beta) = \Delta v_\beta$, we have that
\[
\int_{\Omega} \langle A \nabla u_\beta, \nabla \varphi \rangle - \langle \nabla v_\beta, \nabla \varphi \rangle =0
\]
for every $\varphi \in C^1_c(\Omega)$. By taking the limit, by weak convergence in $H^1$ we deduce that $w = u-v$ is a weak solution of the quasi-linear equation \eqref{qu eq}. The rest of the theorem follows directly from the main result in \cite{AM12}.
\end{proof}

\section{Spatial segregation of competitive systems: variational interaction}\label{sec: be}

In this section we prove Theorem \ref{thm: be}. 
Notice that each $\mf{u}_\beta$ is a vector of positive functions in $\Omega$, of class $C^{1,\gamma}(\overline{\Omega})$. Moreover, we can define $\bar \nu= \bar \nu(N, A_1,\dots,A_k) \in (0,2)$ as in Theorem \ref{thm: liou seg}.


Now, the first part of the proof of Theorem \ref{thm: be} rests upon the same contradiction argument used for Theorem \ref{thm: lv}, with the obvious modifications related to the different structure for the system, and to the different boundary conditions. We only give a sketchy summary referring, for the details, to the previous section and to \cite{NTTV10, STTZ16} (of course, we use Theorems \ref{thm: liou seg}, \ref{thm: liou syst}, and Lemma \ref{lem: liou ent} instead of the corresponding ``symmetric results" when it is necessary). 

Let $\alpha \in (0,\bar \nu/2)$, and suppose by contradiction that $\{\mf{u}_\beta\}$ is not bounded in $C^{0,\alpha}(\overline{\Omega})$, namely there exists a sequence $\beta \to +\infty$ such that 
\[
L_\beta:= \sup_i \sup_{\substack{x \neq y, \\ x, y \in \overline{\Omega}}} \frac{|u_{i,\beta}(x)- u_{i,\beta}(y)|}{|x-y|^\alpha}  \to +\infty.
\]
We can assume that $M_\beta$ is achieved by $u_{1,\beta}$ at the pair $(x_\beta, y_\beta)$, with $|x_\beta -y_\beta| \to 0$. Then we introduce the following blow-up of $\mf{u}_\beta$ with center in $x_\beta$, and $r_\beta \to 0^+$ to be chosen later:
\[
\mf{v}_\beta(x):= \frac{1}{L_\beta r_\beta^\alpha} \mf{u}_\beta(x_\beta + r_\beta x), \qquad x \in \Omega_\beta := \frac{\Omega-x_\beta}{r_\beta}.
\]
The scaled domains $\Omega_\beta$ can either exhaust $\R^N$, or tend to a half-space. In both cases, we denote the limit domain by $\Omega_\infty$. The function $\mf{v}_\beta$ is a positive solution to
\[
\begin{cases}
-L_i v_{i,\beta}= g_{i,\beta}(x,v_{i,\beta})- M_\beta v_{i,\beta} \sum_{j \neq i} b_{ij} v_{j,\beta}^2  &\text{ in $\Omega_\beta$}\\
v_{i,\beta} = 0 &\text{ on $\pa \Omega_\beta$},
\end{cases} 
\]
where $M_\beta = L_\beta^2 r_\beta^{2+2\alpha} \beta$, and 
\[
g_{i,\beta}(x,v_{i,\beta}(x)) = \frac{r_\beta^{2-\alpha}}{L_\beta} f_{i,\beta}\left(x_\beta + r_\beta x, L_\beta r_\beta^\alpha v_{i,\beta}(x) \right) = \frac{r_\beta^{2-\alpha}}{L_\beta} f_{i,\beta}\left(x_\beta + r_\beta x,u_{i,\beta}(x_\beta + r_\beta x) \right).
\]
Notice that $\|g_{i,\beta}(\cdot,v_{i,\beta}(\cdot))\|_{L^\infty(\Omega_\beta)} \to 0$ as $\beta \to +\infty$, thanks to the assumptions on $f_{i,\beta}$ and the upper bound on $\|u_{i,\beta}\|_{L^\infty(\Omega)}$. Moreover, for every $\beta$
\[
\max_i \max_{\substack{x \neq y, \\ x, y \in \overline{\Omega}}} \frac{|v_{i,\beta}(x)- v_{i,\beta}(y)|}{|x-y|^\alpha} =  \frac{|v_{1,\beta}(0)- v_{1,\beta}\left(\frac{y_\beta-x_\beta}{r_\beta} \right)|}{\left|\frac{x_\beta-y_\beta}{r_\beta}\right|^\alpha} = 1.
\]

\begin{lemma}\label{lem: half-space'}
Suppose that $\Omega_\beta$ tends to a half-space $\Omega_\infty$. Then it is possible to extend $\mf{v}_\beta$ outside $\Omega_\beta$ in a Lipschitz fashion, in such a way that:
\begin{itemize}
\item[($i$)] If $\{\mf{v}_\beta(0)\}$ is bounded, then 
$\mf{v}_\beta \to \mf{v}$ in $C^{0,\alpha'}_{\loc}(\R^N)$ for every $0<\alpha'<\alpha$, up to a subsequence; moreover, the limit function $\mf{v}$ attains the constant value $0$ on $\pa \Omega_\infty$.
\item[($ii$)] If $\{\mf{v}_\beta(0)\}$ is unbounded, then $\Omega_\infty=\R^N$. 
\end{itemize}
\end{lemma}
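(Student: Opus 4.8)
The plan is to adapt, with simplifications, the proof of Lemma~\ref{lem: half-space}: the gain here is that the Dirichlet data vanish identically, so no Kirszbraun-type extension of a boundary datum is needed. First I would extend each $v_{i,\beta}$ to all of $\R^N$ by setting it equal to $0$ outside $\Omega_\beta$; equivalently, one extends $u_{i,\beta}$ by $0$ outside $\Omega$, which is a Lipschitz extension for each fixed $\beta$ since $u_{i,\beta}\in C^{1,\gamma}(\overline\Omega)$ vanishes on $\pa\Omega$. The key point is that this extension does not increase the $\alpha$-Hölder seminorm: by construction the seminorm of $v_{i,\beta}$ on $\overline{\Omega_\beta}$ is at most $1$ (this is just $[u_{i,\beta}]_{C^{0,\alpha}(\overline\Omega)}\le L_\beta$ rescaled), $v_{i,\beta}$ vanishes on $\pa\Omega_\beta$, and for $x\in\Omega_\beta$, $y\notin\Omega_\beta$ one picks $\bar x\in\pa\Omega_\beta$ with $|x-\bar x|=\dist(x,\pa\Omega_\beta)\le|x-y|$ and uses $v_{i,\beta}(\bar x)=0$ to get $|v_{i,\beta}(x)-v_{i,\beta}(y)|=|v_{i,\beta}(x)-v_{i,\beta}(\bar x)|\le|x-\bar x|^\alpha\le|x-y|^\alpha$. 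Hence the extended $\mf v_\beta$ has $\alpha$-Hölder seminorm on $\R^N$ bounded by $1$, uniformly in $\beta$.

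For part $(ii)$ I would use the elementary distance estimate: writing $d_\beta=\dist(x_\beta,\pa\Omega)$ and choosing $\bar x_\beta\in\pa\Omega$ realizing it (so $u_{i,\beta}(\bar x_\beta)=0$), and recalling $\dist(0,\pa\Omega_\beta)=d_\beta/r_\beta$, one has
\[
|v_{i,\beta}(0)|=\frac{|u_{i,\beta}(x_\beta)|}{L_\beta r_\beta^\alpha}=\frac{|u_{i,\beta}(x_\beta)-u_{i,\beta}(\bar x_\beta)|}{L_\beta r_\beta^\alpha}\le\frac{L_\beta\,d_\beta^\alpha}{L_\beta r_\beta^\alpha}=\Big(\frac{d_\beta}{r_\beta}\Big)^\alpha=\dist(0,\pa\Omega_\beta)^\alpha .
\]
Therefore, if $\{\mf v_\beta(0)\}$ is unbounded, then $\dist(0,\pa\Omega_\beta)\to+\infty$ along a subsequence, which is incompatible with $\Omega_\beta$ converging to a half-space; hence necessarily $\Omega_\infty=\R^N$. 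In particular, under the standing hypothesis that $\Omega_\beta$ tends to a half-space, $\{\mf v_\beta(0)\}$ is automatically bounded, so only case $(i)$ can actually occur.

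For part $(i)$, assuming $\{\mf v_\beta(0)\}$ bounded, the uniform seminorm bound from the first step makes $\{\mf v_\beta\}$ equibounded and equicontinuous on every compact set, so Ascoli--Arzel\`a produces a subsequence converging locally uniformly to some $\mf v$, and the uniform Hölder bound upgrades this, by interpolation, to convergence in $C^{0,\alpha'}_{\loc}(\R^N)$ for every $\alpha'\in(0,\alpha)$. To identify the boundary value: for any compact $K$ contained in the open half-space $\R^N\setminus\overline{\Omega_\infty}$, the convergence $\Omega_\beta\to\Omega_\infty$ gives $K\subset\Omega_\beta^c$ for $\beta$ large, so $v_{i,\beta}\equiv0$ on $K$ and hence $v_i\equiv0$ on $K$; letting $K$ exhaust $\R^N\setminus\overline{\Omega_\infty}$ and invoking continuity of $\mf v$ yields $\mf v\equiv0$ on $\pa\Omega_\infty$.

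I expect the only mildly delicate point to be the extension step — one must be sure the extension does not inflate the Hölder seminorm — but, as noted, the vanishing of the boundary data makes this immediate, so there is really no serious obstacle: the whole argument is a routine variant of the proof of Lemma~\ref{lem: half-space}. It is worth emphasising that neither the equations satisfied by $\mf v_\beta$ nor the cubic interaction term $M_\beta v_{i,\beta}\sum_{j\neq i}b_{ij}v_{j,\beta}^2$ enter this lemma at all: it is a pure statement about the normalized blow-up sequence and the way it meets the boundary.
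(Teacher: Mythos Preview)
Your proof is correct and follows essentially the same approach as the paper: extend $\mf{u}_\beta$ by $0$ outside $\Omega$, then for (i) use the uniform H\"older bound and Ascoli--Arzel\`a exactly as in Lemma~\ref{lem: half-space}, and for (ii) use the H\"older estimate between $0$ and a boundary point to force $\dist(0,\pa\Omega_\beta)\to+\infty$. The paper phrases (ii) via $\inf_{B_R}v_{i,\beta}\ge v_{i,\beta}(0)-CR^\alpha\to+\infty$ forcing $B_R\subset\Omega_\beta$, which is the same idea; your version is in fact slightly more explicit, as is your verification that the zero-extension preserves the H\"older seminorm.
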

\begin{proof}
($i$) This is very similar to point ($i$) of Lemma \ref{lem: half-space}, once that we extend $\mf{u}_\beta$ as equal to $0$ outside $\Omega$. \\
($ii$) Let $R>0$ be arbitrarily chosen. If $v_{i,\beta}(0) \to +\infty$ along a subsequence, then by uniform H\"older estimates 
\[
\inf_{B_R} v_{i,\beta} \ge v_{i,\beta}(0) - CR^\alpha \to +\infty.
\]
But $v_{i,\beta} \equiv 0$ in $\Omega_\beta^c$, and hence $B_R(0) \subset \Omega_\beta$ eventually. 
\end{proof}

With the help of this lemma, and by following \cite[Section 3]{NTTV10} (see also Section \ref{sec: lv}), it is not difficult to prove that:

\begin{lemma}\label{lem: bdd 0'}
Let $r_\beta \to 0^+$ be such that
\begin{itemize}
\item[($i$)] there exists $R'>0$ such that $|x_\beta-y_\beta| \le R' r_\beta$; 
\item[($ii$)] $M_\beta \not \to 0$.
\end{itemize}
Then $\{\mf{v}_\beta(0)\}$ is bounded in $\beta$.
\end{lemma}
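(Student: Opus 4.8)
The plan is to argue by contradiction, following the scheme of Lemma~\ref{lem: bdd 0} almost verbatim, with the nonlinear term $g_{i,\beta}$ playing the role of a vanishing perturbation. Suppose that along a subsequence $v_{h,\beta}(0) \to +\infty$ for some index $h$. Under assumption~($ii$), $M_\beta \not\to 0$, so (possibly passing to a further subsequence) $M_\beta \ge c_0 > 0$; combined with the uniform local H\"older bound on $\mf{v}_\beta$ this gives, for every $R>R'$,
\[
I_\beta := M_\beta \inf_{B_{2R}(0) \cap \Omega_\beta} v_{h,\beta} \ \ge \ c_0\big(v_{h,\beta}(0) - C(2R)^\alpha\big) \ \to \ +\infty .
\]

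First I would apply Lemma~\ref{lem: cpq} to the components $v_{i,\beta}$ with $i \neq h$. Each such $v_{i,\beta}$ is a positive subsolution of $-\div(A_i \nabla v_{i,\beta}) \le -\beta b_{ih} v_{h,\beta}^2\, v_{i,\beta} + \|g_{i,\beta}\|_\infty \le -M_\beta b_{ih}(\inf_{B_{2R}} v_{h,\beta})^2 / (L_\beta^2 r_\beta^{2+2\alpha})\cdots$; more precisely, after rescaling the coefficient correctly, one gets an effective absorption rate proportional to $I_\beta^2$ on $B_{2R}\cap\Omega_\beta$ (here one uses that $M_\beta v_{h,\beta}^2$ controls the zeroth-order term, and that $\|g_{i,\beta}\|_{L^\infty(\Omega_\beta)} \to 0$). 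Lemma~\ref{lem: cpq}, with $\sqrt{M}\sim I_\beta \to +\infty$ and $\delta = \|g_{i,\beta}\|_\infty \to 0$, then yields $\|v_{i,\beta}\|_{L^\infty(B_R \cap \Omega_\beta)} \to 0$ for all $i \neq h$, and consequently $\|L_i v_{i,\beta}\|_{L^\infty(B_R \cap \Omega_\beta)} \to 0$ for every $i$ (for $i=h$ because the competition term $v_{h,\beta}\sum_{j\neq h} v_{j,\beta}^2$ is killed by the vanishing of the other components, and $g_{h,\beta}\to0$ uniformly). By the same truncation/extension device as in Lemma~\ref{lem: half-space'}(i) when $\Omega_\infty$ is a half-space, we may regard everything on all of $\R^N$.

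Next, set $\tilde{\mf{v}}_\beta := \mf{v}_\beta - \mf{v}_\beta(0)$. The uniform local H\"older bound gives, up to a subsequence, $\tilde{\mf v}_\beta \to \tilde{\mf v}$ in $C^{0,\alpha'}_{\loc}$, with $\tilde v_i \equiv 0$ for $i \neq h$; the uniform convergence of the $A_i$-Laplacians to $0$ upgrades this to $C^1_{\loc}(\Omega_\infty)$ convergence, so $\tilde v_h$ is $A_h$-harmonic in $\Omega_\infty$, globally $\alpha$-H\"older continuous, and (by Lemma~\ref{lem: half-space'}(ii), which forces $\Omega_\infty = \R^N$ in the unbounded case) actually entire, or else constant on $\partial\Omega_\infty$ in the half-space case. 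Finally I would show $\tilde v_1$ is non-constant exactly as in Lemma~\ref{lem: bdd 0}: writing $z := \lim (y_\beta - x_\beta)/r_\beta$ (along a subsequence, bounded by ($i$)), if $z=0$ the $C^1_{\loc}$ bound forces the normalized increment to vanish, contradicting its being $\equiv 1$; hence $z \neq 0$ and $|\tilde v_1(0) - \tilde v_1(z)| = |z|^\alpha > 0$. In particular $h=1$, and $\tilde v_1$ is a non-constant globally $\alpha$-H\"older $A_1$-harmonic function on $\R^N$ (or on a half-space with constant boundary value), contradicting Lemma~\ref{lem: liou ent}. This contradiction proves the claim.

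The main obstacle I expect is the bookkeeping in the application of Lemma~\ref{lem: cpq}: one must verify that, after the blow-up rescaling, the zeroth-order coefficient in the differential inequality for $v_{i,\beta}$ ($i\neq h$) really is bounded below by a quantity comparable to $I_\beta^2 \to +\infty$ on the fixed ball $B_{2R}$, uniformly in $\beta$, and simultaneously that the inhomogeneous term $g_{i,\beta}$ contributes only a vanishing additive error $\delta/M \to 0$ — so that the exponential decay $e^{-crI_\beta}$ dominates and the $L^\infty$ norm on $B_R$ goes to zero. Once this quantitative estimate is in place, the rest is a routine repetition of the arguments in Lemma~\ref{lem: bdd 0} and Lemma~\ref{lem: half-space'}.
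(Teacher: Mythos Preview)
Your proposal is correct and follows essentially the same approach the paper intends: the paper gives no detailed proof here, merely pointing to \cite[Section 3]{NTTV10} and to the proof of Lemma~\ref{lem: bdd 0}, which is exactly the template you reproduce. Two minor remarks. First, with your definition $I_\beta = M_\beta \inf_{B_{2R}} v_{h,\beta}$, the absorption coefficient in the rescaled inequality for $v_{i,\beta}$ ($i\neq h$) is $b_{ih} M_\beta (\inf v_{h,\beta})^2 = b_{ih} I_\beta \cdot \inf v_{h,\beta}$, not $I_\beta^2$; this is immaterial since both factors tend to $+\infty$, but the bookkeeping you flag as the ``main obstacle'' is in fact straightforward once written out correctly. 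Second, your hedge about the half-space case is unnecessary: Lemma~\ref{lem: half-space'}($ii$) already tells you that if $\{\mf v_\beta(0)\}$ is unbounded then $\Omega_\infty = \R^N$, so under the contradiction hypothesis you are automatically in the entire case and need not invoke the constant-boundary-value part of Lemma~\ref{lem: liou ent}.
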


\begin{lemma}\label{lem: rel'}
It results that
\[
\limsup_{\beta \to +\infty} \beta L_\beta^2 |x_\beta-y_\beta|^{2+2\alpha} = +\infty.
\]
\end{lemma}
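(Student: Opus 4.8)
I would prove Lemma~\ref{lem: rel'} by adapting, almost verbatim, the argument of Lemma~\ref{lem: rel}, the only changes being the exponent in the scaling and the Liouville theorem invoked at the end.

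Arguing by contradiction, suppose $\beta L_\beta^2|x_\beta-y_\beta|^{2+2\alpha}$ stays bounded, and choose $r_\beta:=(\beta L_\beta^2)^{-1/(2+2\alpha)}$. Then $r_\beta\to 0^+$, $M_\beta=L_\beta^2 r_\beta^{2+2\alpha}\beta\equiv 1$, and $|x_\beta-y_\beta|=r_\beta\big(\beta L_\beta^2|x_\beta-y_\beta|^{2+2\alpha}\big)^{1/(2+2\alpha)}\le R'r_\beta$ for some $R'>0$; thus assumptions ($i$)--($ii$) of Lemma~\ref{lem: bdd 0'} hold and $\{\mf v_\beta(0)\}$ is bounded. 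By the uniform local $\alpha$-H\"older bound, up to a subsequence $\mf v_\beta\to\mf v$ locally uniformly in $\Omega_\infty$ (using the Lipschitz extension of Lemma~\ref{lem: half-space'} when $\Omega_\infty$ is a half-space, in which case $\mf v\equiv 0$ on $\pa\Omega_\infty$). Since $M_\beta\equiv 1$ and $\|g_{i,\beta}(\cdot,v_{i,\beta})\|_{L^\infty(\Omega_\beta)}\to 0$, the right-hand sides $g_{i,\beta}(\cdot,v_{i,\beta})-v_{i,\beta}\sum_{j\neq i}b_{ij}v_{j,\beta}^2$ are bounded in $L^\infty_{\loc}(\Omega_\infty)$, so interior elliptic estimates upgrade the convergence to $C^1_{\loc}(\Omega_\infty)$ and the limit solves
\[
-\div(A_i\nabla v_i)=-v_i\sum_{j\neq i}b_{ij}v_j^2,\qquad v_i\ge 0,\qquad\text{in }\Omega_\infty,
\]
with every $v_i$ either strictly positive or identically zero in the interior, by the strong maximum principle; and, exactly as in the last part of the proof of Lemma~\ref{lem: bdd 0}, the normalisation of the H\"older quotient forces $v_1$ to be non-constant.

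If $\Omega_\infty=\R^N$, the system above is a particular case of \eqref{sys liou} (take $p_{ij}=1$ and $g_{ij}(x,t)=b_{ij}t^2$, each $I_h$ a singleton, so that (G1)--(G3) hold), each $v_i$ grows at most like $|x|^\alpha$ at infinity, and $2\alpha<\bar\nu$; hence Theorem~\ref{thm: liou syst} forces $\mf v$ to be constant, contradicting the non-constancy of $v_1$. If instead $\Omega_\infty$ is a half-space, then $v_1\ge 0$ is $A_1$-subharmonic (its right-hand side being $\le 0$), vanishes on $\pa\Omega_\infty$, and is globally $\alpha$-H\"older with $\alpha<1$; I would then show $v_1\equiv 0$ by a Phragm\'en--Lindel\"of-type comparison — reduce $A_1$ to the identity and $\pa\Omega_\infty$ to $\{x_N=0\}$ by a linear change of variables, bound $v_1$ in each half-ball $B_R\cap\{x_N>0\}$ by the harmonic function equal to $v_1$ on the spherical cap and to $0$ on the flat face, odd-reflect that function across $\{x_N=0\}$, and use the interior gradient estimate together with $\alpha<1$ to let $R\to+\infty$ — once more contradicting the non-constancy of $v_1$.

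The half-space alternative is the delicate point. In the Lotka--Volterra setting (Lemma~\ref{lem: rel}) the mutually disjoint supports of the boundary data make all components of the blow-up limit but one vanish on a half-space, so that the surviving one is $A_1$-\emph{harmonic} and Lemma~\ref{lem: liou ent}($ii$) applies directly; here instead $\mf v$ only vanishes on $\pa\Omega_\infty$ and $v_1$ is a priori merely $A_1$-\emph{sub}harmonic, so one genuinely needs the Phragm\'en--Lindel\"of argument sketched above (equivalently, a version of Lemma~\ref{lem: liou ent}($ii$) for subsolutions vanishing on a hyperplane with sub-linear growth). Everything else is routine and parallels Section~\ref{sec: lv}.
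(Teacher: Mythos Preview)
Your argument is correct and follows the same contradiction scheme the paper indicates (the paper gives no explicit proof of Lemma~\ref{lem: rel'}, merely pointing to \cite{NTTV10} and Section~\ref{sec: lv}); the choice of $r_\beta$, the use of Lemma~\ref{lem: bdd 0'}, the passage to the limit system, and the appeal to Theorem~\ref{thm: liou syst} in the full-space case are exactly what is intended.

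The half-space alternative is where you add genuine content. In \cite{NTTV10} every operator is the Laplacian, so one odd-reflects all components across $\partial\Omega_\infty$, recovers the system on $\R^N$, and applies the global Liouville theorem for sign-changing solutions. With distinct matrices $A_i$ and a generic limiting hyperplane, simultaneous odd reflection need not preserve the operators $\div(A_i\nabla\cdot)$, so the paper's bare pointer to \cite{NTTV10} hides a small issue in the anisotropic setting. Your Phragm\'en--Lindel\"of barrier --- reduce $A_1$ to the identity by a linear map (which sends the half-space to another half-space and preserves the H\"older exponent), compare $v_1$ with its harmonic replacement on half-balls $B_R^+$, odd-reflect that replacement across the flat face, and use the interior gradient estimate together with $\alpha<1$ to let $R\to\infty$ --- is a clean and correct fix that uses only the $A_1$-subharmonicity of $v_1$, its vanishing on the hyperplane, and its sublinear growth. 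An alternative closer to the paper's spirit is to zero-extend each $v_i$ to $\R^N$ (the extension remains $A_i$-subharmonic and still subsolves the two-component inequality \eqref{subsys} because the boundary contribution has the favourable sign), apply Theorem~\ref{thm: liou sub} pairwise to kill every $v_j$ with $j\neq 1$, deduce that $v_1$ is then genuinely $A_1$-\emph{harmonic} in $\Omega_\infty$, and finish with Lemma~\ref{lem: liou ent}($ii$). Both routes reach the same contradiction; yours is the more direct one.
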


At this point we fix the choice of $r_\beta$ as in Section \ref{sec: lv}, and analyze the asymptotic behavior of $\mf{v}_\beta$. 

\begin{lemma}\label{lem: asy}
Let 
\[
r_\beta= |x_\beta-y_\beta|.
\]
There exist $\mf{v}$, globally $\alpha$-H\"older continuous with $[\mf{v}]_{C^{0,\alpha}(\R^N)} = 1$ such that, as $\beta \to +\infty$, the following holds up to a subsequence:
\begin{itemize}
\item[($i$)] $\Omega_\infty=\R^N$, and $\mf{v}_\beta \to \mf{v}$ in $C^{0,\alpha'}_{\loc}(\R^N)$;
\item[($ii$)] $\int_{B_r(x_0)} M_\beta v_{i,\beta}^2 v_{j,\beta}^2 \to 0$, for any $r>0$ and $x_0 \in \R^N$;
\item[($iii$)] $\mf{v}_\beta \to \mf{v}$ in $H^1_{\loc}(\R^N)$.
\end{itemize}
\end{lemma}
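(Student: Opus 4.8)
The plan is to run the blow-up/compactness scheme of the proof of Theorem~\ref{thm: lv}, replacing the ``symmetric'' Liouville inputs by Theorem~\ref{thm: liou seg} and Lemma~\ref{lem: liou ent}. First, since $r_\beta=|x_\beta-y_\beta|$, Lemma~\ref{lem: rel'} gives $\limsup_\beta M_\beta=+\infty$, so along a subsequence $M_\beta\to+\infty$; then hypotheses ($i$)--($ii$) of Lemma~\ref{lem: bdd 0'} hold with $R'=1$, so $\{\mf{v}_\beta(0)\}$ is bounded, and together with $[\mf{v}_\beta]_{C^{0,\alpha}}\le1$ this yields $|v_{i,\beta}(x)|\le C(1+|x|^\alpha)$ uniformly on compact sets. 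By Ascoli--Arzel\`a and interpolation (using the extension of Lemma~\ref{lem: half-space'}($i$) if $\Omega_\beta$ tends to a half-space) we get, up to a subsequence, $\mf{v}_\beta\to\mf{v}$ in $C^{0,\alpha'}_{\loc}(\R^N)$ for all $\alpha'<\alpha$; since $r_\beta=|x_\beta-y_\beta|$ the maximising pair is $0$ and $z_\beta:=(y_\beta-x_\beta)/r_\beta\in\pa B_1$, and passing to the limit along $z_\beta\to z$ gives $|v_1(z)-v_1(0)|=1$, so $v_1$ is non-constant and, by lower semicontinuity of the H\"older seminorm, $[\mf{v}]_{C^{0,\alpha}(\R^N)}=1$. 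Testing the $i$-th equation with a non-negative cutoff $\eta$ and integrating by parts shows $M_\beta\int v_{i,\beta}\eta\sum_{j\neq i}b_{ij}v_{j,\beta}^2\le C$ (the other terms being bounded since $g_{i,\beta}\to0$ uniformly and $\{v_{i,\beta}\}$ is locally bounded), whence $v_iv_j\equiv0$ in $\Omega_\infty$; testing with $v_{i,\beta}\eta^2$ and absorbing gives $\int_{\supp\eta}|\nabla v_{i,\beta}|^2\le C$, so $\mf{v}_\beta\weak\mf{v}$ weakly in $H^1_{\loc}(\Omega_\infty)$, and passing to the weak limit in $-\div(A_i\nabla v_{i,\beta})\le g_{i,\beta}\to0$ shows each $v_i$ is $A_i$-subharmonic in $\Omega_\infty$.

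Next I would prove part ($i$) by excluding the half-space. Suppose $\Omega_\infty$ is a half-space; by Lemma~\ref{lem: half-space'}($i$), $\mf{v}\equiv0$ on $\pa\Omega_\infty$. Extending each $v_i$ by $0$ across $\pa\Omega_\infty$ keeps it non-negative, $A_i$-subharmonic in $\R^N$, continuous, in $H^1_{\loc}(\R^N)$, globally $\alpha$-H\"older, with disjoint positivity sets and growth $|x|^\alpha$; as $2\alpha<\bar\nu$, Theorem~\ref{thm: liou seg} forces $k-1$ of them to vanish, and since $v_1$ is non-constant we get $v_j\equiv0$ in $\Omega_\infty$ for every $j\neq1$. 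To upgrade $v_1$ to $A_1$-harmonic I would fix $\delta\in(0,1)$, observe that on a ball contained in $\{v_1>\delta\}$ one has $v_{1,\beta}>\delta/2$ for $\beta$ large, hence $-\div(A_j\nabla v_{j,\beta})\le \|g_{j,\beta}\|_\infty-cM_\beta v_{j,\beta}$ there for $j\neq1$, so Lemma~\ref{lem: cpq} gives $M_\beta v_{j,\beta}\to0$ uniformly on compact subsets of $\{v_1>0\}$, thus $M_\beta v_{j,\beta}^2\to0$ and $M_\beta v_{1,\beta}\sum_j b_{1j}v_{j,\beta}^2\to0$, and passing to the limit yields $\div(A_1\nabla v_1)=0$ in $\{v_1>0\}$. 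Since $v_1\ge0$ is $A_1$-subharmonic, $\{v_1=0\}\cap\Omega_\infty$ is open and closed, hence empty (as $v_1$ is non-constant), so $v_1$ is $A_1$-harmonic and globally $\alpha$-H\"older on the half-space with constant boundary value $0$, contradicting Lemma~\ref{lem: liou ent}($ii$). Therefore $\Omega_\infty=\R^N$, which with the convergence above is ($i$); and the same exponential-decay argument, now on compact subsets of $\{v_i>0\}\subset\R^N$, shows $v_i$ is $A_i$-harmonic in $\{v_i>0\}$ for every $i$.

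For ($iii$) I would test the $i$-th equation with $(v_{i,\beta}-v_i)\eta^2$: the right-hand side is $O(\|v_{i,\beta}-v_i\|_{L^\infty(\supp\eta)})\to0$ (for the $M_\beta$-term using $M_\beta\int_{\supp\eta}v_{i,\beta}\sum b_{ij}v_{j,\beta}^2\le C$, for the $g_{i,\beta}$-term using uniform convergence), and the cross term with $\nabla\eta$ vanishes by uniform convergence and $L^2_{\loc}$ boundedness of $\nabla v_{i,\beta}$; combining with $\nabla v_{i,\beta}\weak\nabla v_i$ and the positive-definiteness of $A_i$ gives $\int\eta^2\langle A_i\nabla v_{i,\beta},\nabla v_{i,\beta}\rangle\to\int\eta^2\langle A_i\nabla v_i,\nabla v_i\rangle$, hence strong $H^1_{\loc}(\R^N)$ convergence. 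Finally, for ($ii$) I would test the $i$-th equation with $v_{i,\beta}\eta^2$, obtaining $M_\beta\int\eta^2 v_{i,\beta}^2\sum_j b_{ij}v_{j,\beta}^2=\int g_{i,\beta}v_{i,\beta}\eta^2-\int\langle A_i\nabla v_{i,\beta},\nabla(v_{i,\beta}\eta^2)\rangle$; the first term tends to $0$ and, by ($iii$) and uniform convergence, the second tends to $\int\langle A_i\nabla v_i,\nabla(v_i\eta^2)\rangle=0$, since $v_i$ is $A_i$-harmonic in $\{v_i>0\}$, $A_i$-subharmonic in $\R^N$, and vanishes on the support of the non-negative measure $\div(A_i\nabla v_i)$. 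Taking $\eta\equiv1$ on $B_r(x_0)$ and using non-negativity of the summands yields $M_\beta\int_{B_r(x_0)}v_{i,\beta}^2 v_{j,\beta}^2\to0$.

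The hard part is the exclusion of the half-space: a one-sided estimate on $-\div(A_i\nabla v_{i,\beta})$ only yields that the surviving component is $A_i$-\emph{sub}harmonic, which is not enough to invoke Lemma~\ref{lem: liou ent}; the decisive point is to promote it to $A_i$-\emph{harmonic}, and this is exactly where the exponential decay of the competitors furnished by Lemma~\ref{lem: cpq}, together with the strong maximum principle used to rule out interior zeros, enters. The remaining steps are, once this is in place, routine adaptations of the symmetric arguments in \cite{NTTV10, CTV05}.
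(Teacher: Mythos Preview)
Your overall scheme is sound and close to the paper's route (which simply refers to \cite[Lemmas~3.6 and~3.7]{NTTV10} and to the conclusion of the proof of Theorem~\ref{thm: lv}); parts ($ii$) and ($iii$) are correctly handled. There is, however, a genuine gap in your exclusion of the half-space.

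You write: ``Since $v_1\ge0$ is $A_1$-subharmonic, $\{v_1=0\}\cap\Omega_\infty$ is open and closed, hence empty.'' This is false. The strong maximum principle for $A_1$-subharmonic functions concerns interior \emph{maxima}, not minima: a non-negative $A_1$-subharmonic function may vanish at an interior point without vanishing in a neighborhood (take $x_1^+$ in $\R^N$). Your decay argument via Lemma~\ref{lem: cpq} correctly yields $\div(A_1\nabla v_1)=0$ in the open set $\{v_1>0\}$, but this does \emph{not} extend across $\{v_1=0\}$; indeed, in the full-space case the paper proves in Lemma~\ref{lem: prop limit} that $\{v_1=0\}\neq\emptyset$, and the Remark following it explains that promoting $v_1$ from subharmonic to harmonic on the whole domain is precisely the obstruction peculiar to the variational interaction. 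So you cannot reach Lemma~\ref{lem: liou ent}($ii$) along this route.

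A correct way to exclude the half-space bypasses harmonicity altogether. Extend $v_1$ by $0$ across $\pa\Omega_\infty$: it is non-negative, $A_1$-subharmonic in $\R^N$, globally $\alpha$-H\"older, and identically $0$ on the closed half-space $\Omega_\infty^c$. Pick a non-trivial affine function $\ell$ with $\{\ell>0\}=\textrm{int}\,\Omega_\infty^c$ and set $w:=\ell^+$; then $w$ is non-negative, $A_1$-subharmonic (positive part of an $A_1$-harmonic function), grows linearly, and $v_1\cdot w\equiv0$. After the change of variables $x\mapsto A_1^{1/2}x$ the pair $(v_1,w)$ becomes a pair of non-negative subharmonic functions with disjoint supports and growths $|x|^\alpha$ and $|x|$; since $\alpha+1<2$, the classical ACF monotonicity formula together with the argument of \cite[Proposition~7.2]{CTV05} forces one of them to vanish---a contradiction, as both are non-trivial. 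This closes the gap without appealing to Lemma~\ref{lem: liou ent}($ii$).
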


For the proof, we refer to \cite[Lemmas 3.6 and 3.7]{NTTV10} (see also the conclusion of the proof of Theorem \ref{thm: lv}). The properties of the limit profile are collected in next statement.

\begin{lemma}\label{lem: prop limit}
Let $\mf{v}$ be the limit function defined in Lemma \ref{lem: asy}. Then:
\begin{itemize}
\item[($i$)] $v_i \equiv 0$ in $\R^N$ for every $i \neq 1$;
\item[($ii$)] $v_1$ is non-constant, and $\div (A_1 \nabla v_1) = 0$ in $\{v_1>0\}$;
\item[($iii$)] $\{v_1=0\} \neq \emptyset$ and $\{v_1>0\}$ is connected.
\end{itemize}
\end{lemma}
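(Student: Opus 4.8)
The plan is to obtain each item by passing to the limit in the rescaled system and then invoking the Liouville theorems of Section~\ref{sec: liou}. From Lemma~\ref{lem: asy} we have at our disposal: $\Omega_\infty=\R^N$; $\mf v_\beta\to\mf v$ in $C^{0,\alpha'}_{\loc}(\R^N)$ and in $H^1_{\loc}(\R^N)$; and $\int_{B_r(x_0)}M_\beta v_{i,\beta}^2v_{j,\beta}^2\to0$ for every $r>0$, $x_0\in\R^N$. Moreover, for the present choice $r_\beta=|x_\beta-y_\beta|$ Lemma~\ref{lem: rel'} gives $M_\beta\to+\infty$, while the assumptions on $f_{i,\beta}$ and the $L^\infty$ bound on $\mf u_\beta$ give $\|g_{i,\beta}(\cdot,v_{i,\beta})\|_{L^\infty(\Omega_\beta)}\to0$. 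I would first record three preliminary facts. \emph{(a) $v_1$ is non-constant.} Since $r_\beta=|x_\beta-y_\beta|$, the point $z_\beta:=(y_\beta-x_\beta)/r_\beta$ lies on $\pa B_1$, hence $z_\beta\to z\in\pa B_1$ along a subsequence; passing to the limit in $|v_{1,\beta}(0)-v_{1,\beta}(z_\beta)|\,|z_\beta|^{-\alpha}=1$ and using the local uniform convergence gives $|v_1(0)-v_1(z)|=1$. \emph{(b) Each $v_i$ is $A_i$-subharmonic on $\R^N$.} From $-\div(A_i\nabla v_{i,\beta})\le g_{i,\beta}(\cdot,v_{i,\beta})$ one tests against $\varphi\in C^\infty_c(\R^N)$ with $\varphi\ge0$ and lets $\beta\to+\infty$, using $\|g_{i,\beta}\|_{L^\infty}\to0$ and the $H^1_{\loc}$ convergence, to obtain $\int\langle A_i\nabla v_i,\nabla\varphi\rangle\le0$. \emph{(c) $v_i\cdot v_j\equiv0$ for $i\neq j$.} By local uniform convergence, $\int_{B_r(x_0)}v_i^2v_j^2\le\liminf_\beta M_\beta^{-1}\big(\int_{B_r(x_0)}M_\beta v_{i,\beta}^2v_{j,\beta}^2\big)=0$, since $M_\beta\to+\infty$.

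Item~(i) then follows: $\mf v$ being $\alpha$-H\"older continuous, $|v_i(x)|\le C(1+|x|^\alpha)$, and $2\alpha<\bar\nu$; so by (b), (c) and Theorem~\ref{thm: liou seg} applied to $(v_1,\dots,v_k)$ at least $k-1$ of the $v_i$ vanish identically, and by (a) $v_1\not\equiv0$, whence $v_i\equiv0$ for all $i\neq1$. For item~(ii), $v_1$ is non-constant by (a). To see that $\div(A_1\nabla v_1)=0$ in $\{v_1>0\}$, fix $\varphi\in C^\infty_c(\{v_1>0\})$; on $\supp\varphi$ one has $v_1\ge2\delta>0$, hence $v_{1,\beta}\ge\delta$ for $\beta$ large, and then Lemma~\ref{lem: asy} yields $\int_{\supp\varphi}M_\beta v_{j,\beta}^2\le\delta^{-2}\int_{\supp\varphi}M_\beta v_{1,\beta}^2v_{j,\beta}^2\to0$ for $j\neq1$. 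Testing $-\div(A_1\nabla v_{1,\beta})=g_{1,\beta}(\cdot,v_{1,\beta})-M_\beta v_{1,\beta}\sum_{j\neq1}b_{1j}v_{j,\beta}^2$ against $\varphi$, the right-hand side tends to $0$ (using $\|g_{1,\beta}\|_{L^\infty}\to0$ and the local $L^\infty$ bound on $v_{1,\beta}$), while the left-hand side tends to $\int\langle A_1\nabla v_1,\nabla\varphi\rangle$ by the $H^1_{\loc}$ convergence; hence $\int\langle A_1\nabla v_1,\nabla\varphi\rangle=0$.

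It remains to prove item~(iii). That $\{v_1=0\}\neq\emptyset$ is immediate: otherwise $v_1$ would be $A_1$-harmonic on all of $\R^N$ and globally $\alpha$-H\"older continuous, hence constant by Lemma~\ref{lem: liou ent}$(i)$, contradicting (a). For the connectedness, argue by contradiction: let $\omega$ be a connected component of the open set $\{v_1>0\}$ with $\{v_1>0\}\setminus\omega\neq\emptyset$. Since $\omega$ is also closed in $\{v_1>0\}$, one has $\pa\omega\subset\{v_1=0\}$, so $U:=v_1\chi_\omega$ and $W:=v_1\chi_{\{v_1>0\}\setminus\omega}$ are continuous, nonnegative, with disjoint positivity sets, and $0\le U,W\le v_1\le C(1+|x|^\alpha)$. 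The key point is that $U$ is $A_1$-subharmonic on $\R^N$: using the comparison characterization of $A_1$-subharmonicity, if $B$ is a ball and $h$ is $A_1$-harmonic in $B$ with $h\ge U$ on $\pa B$, then $h\ge0$ in $B$, $h\ge U$ on $\pa(B\cap\omega)\subset(\pa B)\cup(\pa\omega\cap B)$ (on $\pa\omega\cap B$ both sides vanish, since $v_1=0$ there and $h\ge0$), and $h-U$ is $A_1$-harmonic in $B\cap\omega\subset\{v_1>0\}$ by (ii); the minimum principle gives $h\ge U$ in $B\cap\omega$, while $h\ge0=U$ in $B\setminus\omega$. The same argument applies to $W$. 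Since $2\alpha<\bar\nu<2$ and the Liouville threshold for a pair of equations both driven by $A_1$ equals $2$, Theorem~\ref{thm: liou seg} applied to $(U,W)$ forces one of them to vanish identically, contradicting $U>0$ on $\omega$ and $W>0$ on $\{v_1>0\}\setminus\omega$. Hence $\{v_1>0\}$ is connected.

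The step I expect to be the real obstacle is this last one, specifically the verification that the truncation $v_1\chi_\omega$ is $A_1$-subharmonic on the whole space; this is also where the growth restriction $2\alpha<\bar\nu<2$ genuinely enters, since a subharmonic function that is harmonic on its positivity set need not have connected positivity set without such a restriction, as the example $x_1^2-x_2^2$ on $\R^2$ shows. Everything else is a routine passage to the limit, entirely parallel to the Lotka--Volterra case and to \cite{NTTV10}.
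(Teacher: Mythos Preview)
Your proof is correct and follows essentially the same strategy as the paper's. The only substantive difference is in item~(ii): for the $A_1$-harmonicity of $v_1$ in $\{v_1>0\}$, the paper invokes the exponential decay estimate of Lemma~\ref{lem: cpq} (in the spirit of \cite[Lemma~3.7]{NTTV10}), whereas you bypass this entirely by combining the lower bound $v_{1,\beta}\ge\delta$ on $\supp\varphi$ with the integral convergence $\int M_\beta v_{1,\beta}^2 v_{j,\beta}^2\to0$ from Lemma~\ref{lem: asy}(ii) to conclude $\int_{\supp\varphi}M_\beta v_{j,\beta}^2\to0$; this is a cleaner route that avoids the barrier argument. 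For item~(iii) both proofs apply the Liouville theorem to the restrictions $v_1\chi_{\omega_1}$, $v_1\chi_{\omega_2}$; you additionally supply the comparison-principle verification that these restrictions are $A_1$-subharmonic (which the paper leaves implicit), and you correctly observe that since both functions are driven by the \emph{same} matrix $A_1$, the relevant Liouville threshold is actually $2$ rather than $\bar\nu$, which makes the condition $2\alpha<2$ the operative one.
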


\begin{proof}
By Lemma \ref{lem: asy} we have that $v_i \cdot v_j \equiv 0$ in $\R^N$. Moreover,  by $H^1_{\loc}$ convergence and recalling that $\|g_{i,\beta}(\cdot, v_{i,\beta}(\cdot))\|_{L^\infty(\Omega_\beta)} \to 0$, we deduce that $v_i$ is $A_i$-subharmonic, for every $i$. Since $\mf{v}$ is $\alpha$-H\"older with $\alpha < \bar \nu/2$, Theorem \ref{thm: liou seg} implies that only one component of $\mf{v}$ does not vanish identically. But by uniform convergence $\max_{x \in \pa B_1(0)} |v_1(x)- v_1(0)| = 1$, so that $v_i \equiv 0$ in $\R^N$ for every $i \neq 1$, and $v_1$ is non-constant. The fact that $v_1$ is harmonic in the open set $\{v_1>0\}$ can be checked as in \cite[Lemma 3.7]{NTTV10}, by using Lemma \ref{lem: cpq} instead of \cite[Lemma 3.1]{NTTV10}. This completes the proof of ($i$) and ($ii$). 

Suppose now by contradiction that $\{v_1=0\}$ is empty; then $v_1$ would be a positive globally $\alpha$-H\"older $A_1$-harmonic non-constant function, in contradiction Lemma \ref{lem: liou ent}. 

Finally, suppose by contradiction that $\{v_1>0\}$ is disconnected, and let $\omega_1$ and $\omega_2$ two of its connected components. Then the functions $w_1= v_1 \chi_{\omega_1}$ and $w_2 = v_1 \chi_{\omega_2}$ are non-trivial and satisfy the assumptions of Theorem \ref{thm: liou seg}, a contradiction again.   
\end{proof}

From now on we shall mainly focus on the component $v_1$, the only one which survived in the limit process. Therefore, in order to simplify some expressions below, we perform a rotation and a scaling in order to have $A_1= \textrm{Id}$, and hence $v_1$ is harmonic in its positivity set.

\begin{remark}
In the conclusion of the proof of Theorem \ref{thm: lv}, we considered the difference among the differential equations of the component $v_{1,\beta}$ and the others, by taking the weak limit. This gave us the inequality $\div(A_1 \nabla v_1) \ge 0$ in $\R^N$, leading to the $A_1$-harmonicity of $v_1$, which finally provided a contradiction. When we deal with system \ref{be completa}, this strategy fails, due to the lack of symmetry in the exponents of the competition terms. By following \cite{NTTV10}, one may be tempted to consider an Almgren frequency function $N_\beta(\mf{v}_\beta,x_0,r)$ associated with $\mf{v}_\beta$, compute its derivative, and then pass to the limit in $\beta$ in order to derive a monotonicity formula for the frequency function of the limit problem (see \cite[Section 3.2]{NTTV10}). However, in the present setting this strategy fails, due to the lack of symmetry in the diffusion operators. This lack of symmetry creates several complications in the derivation of a good expression for the derivative of $N_\beta(\mf{v}_\beta,x_0,r)$, complications which we could not overcome. We shall then argue in a different way. First, by the variational structure of the problem (this requires $b_{ij}=b_{ji}$), we derive a domain variation formula for $\mf{v}_\beta$. Then we pass to the limit in $\beta$. The properties collected in Lemmas \ref{lem: asy} and \ref{lem: prop limit} at this level allow us to obtain the validity of a domain variation formula for the only non-trivial component $v_1$, in the whole of $\R^N$ (and not only in the interior of its support). In this way, even if we cannot establish a monotonicity formula for the Almgren frequency function associated with $\mf{v}_\beta$, we can still recover a monotonicity formula for the component $v_1$ of the limit profile. This is sufficient to our purposes.
\end{remark}

\begin{lemma}\label{lem: var dom}
Let $Y \in C^\infty_c(\R^N, \R^N)$. Then
\begin{equation}\label{var dom beta}
\begin{split}
2\int_{\Omega_\beta}  \sum_i \langle dY A_i \nabla v_{i,\beta}, \nabla v_{i,\beta} \rangle &-  \sum_i g_{i,\beta}(x,v_{i,\beta}) \langle \nabla v_{i,\beta},Y \rangle \\
& - \int_{\Omega_\beta} \div Y \left( \sum_i \langle A_i \nabla v_{i,\beta}, \nabla v_{i,\beta} \rangle + \beta \sum_{i<j} b_{ij} v_{i,\beta}^2 v_{j,\beta}^2\right)   = 0
\end{split}
\end{equation}
for every $\beta$ sufficiently large, and
\begin{equation}\label{var dom lim}
\int_{\R^N} 2\langle dY \nabla v_1, \nabla v_1 \rangle - \div Y |\nabla v_1|^2 = 0.
\end{equation}
\end{lemma}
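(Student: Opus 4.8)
The plan is to derive \eqref{var dom beta} as the classical domain (inner) variation identity for the energy functional associated with system \eqref{be completa}, and then to pass to the limit $\beta\to+\infty$ using the convergences collected in Lemmas \ref{lem: asy} and \ref{lem: prop limit}. First I would fix $Y\in C^\infty_c(\R^N,\R^N)$ and, for $\beta$ large enough that $\supp Y\subset\subset\Omega_\beta$ (recall $\Omega_\beta\to\R^N$ by Lemma \ref{lem: asy}(i)), consider the flow $\Psi_t=\id+tY$, which is a diffeomorphism of $\Omega_\beta$ for $|t|$ small. Since each $\mf u_\beta$ (and hence each $\mf v_\beta$) solves the Euler--Lagrange system \eqref{be completa}, it is a critical point of
\[
E_\beta(\mf w)=\int_{\Omega_\beta}\Big(\tfrac12\sum_i\langle A_i\nabla w_i,\nabla w_i\rangle-\sum_i F_{i,\beta}(x,w_i)+\tfrac\beta2\sum_{i<j}b_{ij}w_i^2w_j^2\Big),
\]
where $F_{i,\beta}$ is a primitive of $g_{i,\beta}$ in the last variable. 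Evaluating $\frac{d}{dt}\big|_{t=0}E_\beta(\mf v_\beta\circ\Psi_t^{-1})=0$, changing variables, and using the standard expansions $\det d\Psi_t=1+t\div Y+O(t^2)$ and $d\Psi_t^{-1}=\id-t\,dY+O(t^2)$, one obtains exactly \eqref{var dom beta}; the term with $g_{i,\beta}$ comes from differentiating $\int F_{i,\beta}(x,v_{i,\beta}\circ\Psi_t^{-1})$ and an integration by parts (the explicit $x$-dependence of $F_{i,\beta}$ contributes a term that one absorbs — here it is cleaner to test the equation directly rather than invoke the Lagrangian, see below).

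Because of the explicit $x$-dependence of $g_{i,\beta}$ it is in fact more transparent to \emph{derive \eqref{var dom beta} directly} by testing the equation $-L_iv_{i,\beta}=g_{i,\beta}(x,v_{i,\beta})-M_\beta v_{i,\beta}\sum_{j\neq i}b_{ij}v_{j,\beta}^2$ against $\langle\nabla v_{i,\beta},Y\rangle$, integrating over $\Omega_\beta$, integrating by parts, and summing over $i$. The diffusion term produces $\int 2\langle dY A_i\nabla v_{i,\beta},\nabla v_{i,\beta}\rangle-\div Y\langle A_i\nabla v_{i,\beta},\nabla v_{i,\beta}\rangle$ after using the symmetry of $A_i$ and the identity $\partial_k\langle A_i\nabla v,\nabla v\rangle=2\langle A_i\nabla v,\nabla\partial_k v\rangle$; the competition term, upon summing over $i$ and using $b_{ij}=b_{ji}$, collapses to $\beta\sum_{i<j}b_{ij}\langle\nabla(v_{i,\beta}^2v_{j,\beta}^2),Y\rangle$, which after one more integration by parts gives $-\beta\sum_{i<j}b_{ij}\div Y\,v_{i,\beta}^2v_{j,\beta}^2$; no boundary terms appear since $\supp Y$ is compactly contained in $\Omega_\beta$. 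This yields \eqref{var dom beta} for all large $\beta$, with $M_\beta$ reabsorbed into $\beta$ via the scaling (recall $M_\beta=L_\beta^2r_\beta^{2+2\alpha}\beta$, and the $v$'s are scalings of the $u$'s — one has to be a little careful that $\beta\sum_{i<j}b_{ij}v_{i,\beta}^2v_{j,\beta}^2$ is indeed the correctly scaled quantity, which follows from the definition of $\mf v_\beta$).

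Next I would pass to the limit $\beta\to+\infty$ in \eqref{var dom beta}. By Lemma \ref{lem: asy}(iii), $\nabla v_{i,\beta}\to\nabla v_i$ strongly in $L^2_{\loc}(\R^N)$, so $\int\langle dY A_i\nabla v_{i,\beta},\nabla v_{i,\beta}\rangle\to\int\langle dY A_i\nabla v_i,\nabla v_i\rangle$ and similarly for the $\div Y$ term; by Lemma \ref{lem: prop limit}(i), $v_i\equiv0$ for $i\neq1$, so only the $i=1$ terms survive and, after the rotation and scaling that normalizes $A_1=\textrm{Id}$ (performed just before the statement), these become $\int 2\langle dY\nabla v_1,\nabla v_1\rangle-\div Y\,|\nabla v_1|^2$. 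The term $\sum_i g_{i,\beta}(x,v_{i,\beta})\langle\nabla v_{i,\beta},Y\rangle$ vanishes in the limit because $\|g_{i,\beta}(\cdot,v_{i,\beta})\|_{L^\infty(\Omega_\beta)}\to0$ (noted right after the definition of $g_{i,\beta}$) while $\{\nabla v_{i,\beta}\}$ is bounded in $L^2_{\loc}$ (Lemma \ref{lem: asy}(iii)); and the competition term $\int\div Y\,\beta\sum_{i<j}b_{ij}v_{i,\beta}^2v_{j,\beta}^2$ tends to $0$ by Lemma \ref{lem: asy}(ii). Hence \eqref{var dom beta} passes to \eqref{var dom lim}. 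The one genuinely delicate point — the main obstacle — is that \eqref{var dom lim} holds in \emph{all} of $\R^N$, including across the free boundary $\partial\{v_1>0\}$, and not merely in the interior of $\{v_1>0\}$ where $v_1$ is harmonic; this global validity is exactly what the $\beta$-level identity \eqref{var dom beta} buys us, since the uniform-in-$\beta$ estimates of Lemma \ref{lem: asy} control the energy density and the competition density up to and across the interface, whereas working directly with the limit $v_1$ would require one to justify the absence of a singular contribution supported on $\{v_1=0\}$ in the variational identity — precisely the kind of argument this detour is designed to avoid.
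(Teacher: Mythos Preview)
Your proposal is correct and follows essentially the same route as the paper: test each equation against $\langle \nabla v_{i,\beta}, Y\rangle$, use the pointwise identity for the diffusion term, integrate by parts, sum over $i$ (exploiting $b_{ij}=b_{ji}$ to collapse the competition into $\langle\nabla(\sum_{i<j} b_{ij} v_{i,\beta}^2 v_{j,\beta}^2),Y\rangle$), and then pass to the limit via Lemmas~\ref{lem: asy} and~\ref{lem: prop limit}. Your remark that the coupling coefficient in \eqref{var dom beta} should really be $M_\beta$ rather than $\beta$ is well taken---this is a notational slip in the statement, and the limit argument (using Lemma~\ref{lem: asy}(ii)) goes through either way.
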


\begin{proof}
By multiplying the equation for $v_{i,\beta}$ with $\langle \nabla v_{i,\beta}, Y \rangle$ and integrating, we deduce that 
\[
\int_{\Omega_\beta} \langle A_i \nabla v_{i,\beta}, \nabla \big( \langle \nabla v_{i,\beta}, Y \rangle \big) \rangle + \beta v_{i,\beta} \sum_{j \neq i} b_{ij} v_{j,\beta}^2 \langle \nabla v_{i,\beta}, Y \rangle - g_{i,\beta}(x,v_{i,\beta}) \langle \nabla v_{i,\beta}, Y \rangle=0
\]
With lengthy but elementary computations, it is not difficult to check that
\[
\langle A_i \nabla v_{i,\beta}, \nabla \big( \langle \nabla v_{i,\beta}, Y \rangle \big) \rangle = \frac12 \langle Y, \nabla \big(\langle A_i \nabla v_{i,\beta}, \nabla v_{i,\beta} \rangle\big) \rangle + \langle dY A_i \nabla v_{i,\beta}, \nabla v_{i,\beta} \rangle,
\]
whence, by integrating by parts, we deduce that 
\[
\begin{split}
\int_{\Omega_\beta} \langle dY A_i \nabla v_{i,\beta}, \nabla v_{i,\beta} \rangle &-  g_{i,\beta}(x,v_{i,\beta}) \langle \nabla v_{i,\beta}, Y \rangle \\
&- \int_{\Omega_\beta} \frac12 \div Y \langle A \nabla v_{i,\beta}, \nabla v_{i,\beta} \rangle -   \beta v_{i,\beta} \sum_{j \neq i} b_{ij} v_{j,\beta}^2 \langle \nabla v_{i,\beta}, Y \rangle = 0.
\end{split}
\]
We sum over $i$ from $1$ to $k$ and integrate by parts once again, to obtain \eqref{var dom beta}.

Moreover, by taking the limit as $\beta \to +\infty$, and recalling the properties listed in Lemmas \ref{lem: asy} and \ref{lem: prop limit}, and the fact that $\|g_{i,\beta}(\cdot,v_{i,\beta}(\cdot))\|_{L^\infty(\Omega_\beta)} \to 0$, we also obtain \eqref{var dom lim}.
\end{proof}

\begin{proof}[Conclusion of the proof of Theorem \ref{thm: be}]
From the second formula in Lemma \ref{lem: var dom}, we infer that
\[
\int_{S_r(x_0)} |\nabla v_1|^2 = \frac{N-2}r \int_{B_r} |\nabla v_1|^2 + 2 \int_{S_r(x_0)} (\pa_\nu v_1)^2
\]
for every $x_0 \in \R^N$ and almost every $r>0$ (we refer to the second part of the proof of Lemma 2.11 in \cite{ST19} for the details). Furthermore, we have that
\[
\frac{d}{dr} \left( \int_{S_r(x_0)} v_1^2\right) = \frac{N-1}{r} \int_{S_r(x_0)} v_1^2 + 2 \int_{S_r(x_0)} v_1 \pa_\nu v_1
\]
(see \cite[Lemma 2.8]{ST19} for the details). Therefore, by introducing the Almgren frequency function
\[
N(x_0,r) =  \frac{r \int_{B_r(x_0)} |\nabla v_1|^2}{\int_{S_r(x_0)} v_1^2},
\]
it is standard to prove that $N(x_0,\cdot)$ is monotone non-decreasing, and it is constant equal to $c$ if and only $v_1$ is $c$ homogeneous. At this point we can proceed exactly as in \cite[End of the proof of Theorem 1.3]{NTTV10} or \cite[Conclusion of the proof of Proposition 2.1, page 278]{ST19} to deduce that $\{v_1=0\}$ is a linear subspace of dimension at most $N-2$, and in particular has local capacity equal to $0$. But then, since $v \in H^1_{\loc}(\R^N)$, we infer that $v_1$ is harmonic everywhere, is non-constant, and is globally $\alpha$-H\"older continuous for some $\alpha \in (0,1)$, in contradiction with the Liouville theorem. This completes the proof of the boundedness of $\{\mf{u}_\beta\}$ in $C^{0,\alpha}(\overline{\Omega})$. The rest of the thesis of Theorem \ref{thm: be} follows as in \cite{NTTV10} (for the domain variation formula \eqref{dom var lim 1}, one can argue as in Lemma \ref{lem: var dom} with the functions $\mf{u}_\beta$, and then take the limit). 
\end{proof}


\end{document}